\documentclass[reqno]{amsart}
\usepackage{amsmath,amsthm,latexsym,amssymb,hyperref, amsfonts}
\usepackage{fullpage}
\usepackage{stmaryrd}
\usepackage{eucal}
\usepackage{mathrsfs}
\usepackage[all]{xy}
\usepackage{color}
\usepackage{tikz}
\usepackage{url}
\usepackage{tensor}
\usepackage{comment}
\usepackage{tikz-cd}
\usepackage{enumerate}
\usepackage{enumitem}
\usepackage{mathrsfs}
\usepackage{dutchcal}
\usetikzlibrary{matrix,arrows,decorations.pathmorphing}
%%\allowdisplaybreaks
\title[A Monoidal Dold-Kan Correspondence For Comodules]{A Monoidal Dold-Kan Correspondence For Comodules}

\author[Maximilien P\'eroux]{Maximilien P\'eroux}

\theoremstyle{definition}
\newtheorem{defi}{Definition}[section]

\newtheorem{ex}[defi]{Example}
\newtheorem{rem}[defi]{Remark}
\newtheorem{nota}[defi]{Notation}

\numberwithin{equation}{section}

\theoremstyle{plain}
\newtheorem{thm}[defi]{Theorem}
\newtheorem{prop}[defi]{Proposition}
\newtheorem{lem}[defi]{Lemma}
\newtheorem{cor}[defi]{Corollary}

\renewcommand{\o}{\otimes}
\newcommand{\bI}{\mathbb I}
\newcommand{\I}{\mathbb I}
 %%oops

\newcommand{\nnn}{\NN^C}

\newcommand{\wt}[1]{\widetilde{#1}}

\newcommand{\id}{\mathsf{id}}

\newcommand{\bJ}{\mathbb{J}}

\newcommand{\Q}{\mathbb{Q}}

\newcommand{\Smash}{\wedge}
\newcommand{\sm}{\wedge}

\renewcommand{\r}{\rightarrow}

\renewcommand{\ker}{\mathsf{ker}}
\newcommand{\im}{\mathsf{im}}
\newcommand{\coker}{\mathsf{coker}}

\newcommand{\C}{\mathsf{C}}
\newcommand{\D}{\mathsf{D}}
\newcommand{\op}{^\mathsf{op}}
\newcommand{\M}{\mathsf{M}}

\newcommand{\bn}{\mathbb{N}}

\newcommand{\nn}{\mathsf{N}}

\newcommand{\X}{\mathsf{P}}

\newcommand{\Z}{\mathsf{Q}}
\newcommand{\bz}{\mathbb{Z}}
\newcommand{\Post}{\mathsf{Post}}
\newcommand{\ds}{\displaystyle}

\newcommand{\llp}{\mathsf{LLP}}

\newcommand{\ccotens}{{{\square}_C}}
\newcommand{\cotens}{\square}

\newcommand{\ii}{\mathcal{P}}
\newcommand{\iio}{{\mathcal{P}_{\oplus}}}
\newcommand{\jj}{\mathcal{Q}}

\newcommand{\iii}{{\ii^{\geq 0}}}
\newcommand{\iiio}{{\ii_\oplus^{\geq 0}}}
\newcommand{\jjj}{{\jj^{\geq 0}}}

\newcommand{\iid}{{\ii^\Delta}}
\newcommand{\iido}{{\ii_\oplus^\Delta}}
\newcommand{\jjd}{\jj^\Delta}

\newcommand{\NN}{\mathsf{N}}
\newcommand{\N}{\mathscr{N}}

\newcommand{\A}{\mathsf{A}}

\newcommand{\holim}{\mathsf{holim}}
\renewcommand{\lim}{\mathsf{lim}}
\newcommand{\colim}{\mathsf{colim}}
\newcommand{\collim}[1]{\underset{#1}{\colim}}
\newcommand{\llim}[1]{\underset{#1}{\lim}}

\newcommand{\smod}{\mathsf{sMod}}
\newcommand{\smodk}{\mathsf{sMod}_\k}

\newcommand{\emm}{K(V,n)}
\newcommand{\emmm}{P{K(V,n)}}

\newcommand{\cho}{\mathsf{Ch}_R^{\geq 0}}

\newcommand{\Hom}{\mathsf{Hom}}

 %%simplex category

\newcommand{\pull}{\arrow[dr, phantom, "\lrcorner", very near start]}
\newcommand{\push}{\arrow[ul, phantom, "\ulcorner", very near start]}

\renewcommand{\k}{\Bbbk}
\newcommand{\ch}{\mathsf{Ch}}
\newcommand{\chf}{{\mathsf{Ch}_\k}}
\newcommand{\chfo}{{\ch_\k^{\geq 0}}}

\newcommand{\comod}{\mathsf{CoMod}}

\newcommand{\ccoalg}{\mathsf{CoCAlg}}

\newcommand{\W}{\mathsf{W}}

%%% inf cat
\newcommand{\comodinf}{\mathcal{CoMod}}
\newcommand{\Dinf}{\mathcal{D}}

\newcommand{\Cinf}{\mathcal{C}}

\tikzset{
    labl/.style={anchor=south, rotate=-32, inner sep=.9mm}
}

\tikzset{
    labll/.style={anchor=south, rotate=32, inner sep=.9mm}
}

\begin{document}

\address{Department of Mathematics, Michigan State University, 619 Red Cedar Road, East Lansing, MI 48824, USA}
    \email{peroux@msu.edu}

\subjclass [2020] {16T15, 18N40, 18G31, 18G35, 55U15} 
   
\keywords{Dold-Kan correspondence, comodule, fibrantly generated, model category, Postnikov tower, $\infty$-category.}

\begin{abstract}  
Cofibrantly generated model categories are generalizations of CW-approximations which provide an inductive cofibrant replacement. We provide examples of inductive fibrant replacements constructed as Postnikov towers for simplicial and differential graded comodules. Our main application is to show that simplicial comodules and connective differential graded comodules are Quillen equivalent and their derived cotensor products correspond. We deduce that the rational $A$-theory of a simply connected space $X$ is equivalent to the $K$-theory of perfect chain complexes with a $C_*(X; \mathbb{Q})$-comodule structure.
\end{abstract}

\maketitle

%%\setcounter{tocdepth}{1}
%%\tableofcontents

\section{Introduction}

The \emph{Dold-Kan correspondence} established in \cite{doldkan1, doldkan2} is an equivalence between the category $\smod_R$ of simplicial $R$-modules and the category $\cho$ of non-negative chain complexes over a commutative ring $R$. This has been generalized to any abelian category, or even any stable $\infty$-category \cite{lurie1}. The \emph{norma\-lization} functor $\NN\colon\smod_R\rightarrow \cho$ is lax symmetric monoidal with respect to the graded tensor product of chains, the levelwise tensor product of simplicial modules and the\emph{ Eilenberg-Zilber map}. The inverse equivalence $\Gamma\colon\cho\r\smod_R$ also has a (non-symmetric) lax monoidal structure coming from the \emph{Alexander-Whitney map}. The categories are not equivalent as symmetric monoidal categories. Nevertheless, the homotopy categories of $\smod_R$ and $\cho$ have isomorphic symmetric monoidal structures when endowed with their respective \emph{derived tensor product}. Formally, it was shown in \cite{monmodSS} that there is a weak symmetric monoidal Quillen equivalence between the symmetric monoidal model categories of $\smod_R$ and $\cho$. The equivalence lifts to the associated categories of algebras and modules.
This equivalence was crucial in \cite{hzalgshipley} to show that homotopy coherent $H\mathbb{Z}$-algebras in spectra  are equivalent to differential graded algebras. Similar results hold for the cosimplicial case \cite{vosimplicial} and the commutative case \cite{richtershipley}.
It is important to notice that at the level of model categories, the results of \cite{monmodSS} do not imply any Quillen equivalences on model categories of coalgebras or comodules. In fact, in \cite{sore3}, it was shown that the Dold-Kan correspondence does not lift to a Quillen equivalence between coassociative and counital coalgebras. % See more issues on this subject in \cite{coalginDK}.
Differential graded coalgebras are of great important in rational homotopy theory \cite{rational, joseph} as they model Lie algebras.

 In \cite[4.1]{coalginDK}, we proved that the underlying $\infty$-categories of $\smod_R$ and $\cho$ (in the sense of \cite[1.3.4.15, 4.1.7.4]{lurie1}) are equivalent as symmetric monoidal $\infty$-categories. In particular, the $\infty$-categories of homotopy coherent coalgebras in $\smod_R$ and $\cho$ are equivalent. The same holds for homotopy coherent comodules in these categories.
However, unlike the case of algebras and modules, it is more challenging to represent homotopy coherent coalgebras and comodules via a model category of coassociative and counital coalgebra, see \cite{perouxshipley, coalginDK}.
In \cite[1.1]{connectivecomod}, for $R=\k$ a commutative ring with global dimension zero, and $C$ a simply connected strictly cocommutative and coassociative coalgebra in $\chfo$, we proved that homotopy coherent comodules in $\chfo$ over $C$ correspond to strictly coassociative comodules in $\chfo$. The\emph{ cotensor product} of comodules equalize the coactions. As $C$ is always flat, the cotensor remains a comodule. The category $\comod_C(\chfo)$ of connective differential graded comodules over $C$ is endowed with a model structure in which weak equivalences are quasi-isomorphisms between comodules. The model structure is compatible with respect to the cotensor product. This allowed us to endow a symmetric monoidal structure with the \emph{derived cotensor product} on the underlying $\infty$-category of $\comod_C(\chfo)$, see \cite[1.2]{connectivecomod}. Similar arguments hold for simplicial comodules over a simply connected simplicial coalgebra. Although homotopy coherent simplicial comodules correspond to homotopy coherent connective differential graded comodules, it is not immediate that their derived cotensor product correspond. Our main result in this paper shows that they are indeed equivalent.

\begin{thm}[{Theorem \ref{thm: dold-kan correpsondance for Comodules}, Theorem \ref{thm: comonoidal Dold-Kan comomules eq}}]\label{Main theorem}
Let $\k$ be a  commutative ring with global dimension zero. Let $C$ be a simply connected cocommutative differential graded coalgebra over $\k$. Then there is a Quillen equivalence:
\[
\begin{tikzcd}[column sep=large]
\comod_C(\chfo) \ar[shift left=2]{r}{\Gamma} & \ar[shift left=2]{l}{\nn^C}[swap]{\perp} \comod_{\Gamma(C)}(\smodk),
\end{tikzcd}
\]
compatible with the derived cotensor products. In particular, the $\infty$-categories of homotopy coherent simplicial comodules over $\Gamma(C)$ and homotopy coherent connective differential graded comodules over $C$ are equivalent as symmetric monoidal $\infty$-categories.
%%underlying $\infty$-categories of the model categories of comodules $\comod_C(\chfo)$ and $\comod_{\Gamma(C)}(\smodk)$ are equivalent as symmetric monoidal $\infty$-categories. 
%%In particular, the homotopy categories:
%%\[
%%\Ho\left( \comod_C(\chfo) \right) \cong \Ho\left(\comod_{\Gamma(C)}(\smodk)\right)
%%\]
\end{thm}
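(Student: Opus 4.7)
The plan is to lift the classical Dold--Kan Quillen equivalence $\Gamma : \chfo \rightleftarrows \smodk : \nn$ to the level of strict comodules, and then to verify compatibility with the derived cotensor products. Both $\Gamma$ and $\nn$ carry oplax symmetric monoidal structures via the Alexander--Whitney map, so they preserve coassociative/cocommutative coalgebras and comodule structures. In particular, $\Gamma(C)$ is a cocommutative coalgebra in $\smodk$, and there are induced functors $\Gamma : \comod_C(\chfo) \to \comod_{\Gamma(C)}(\smodk)$ and $\nn^C$ in the opposite direction. A formal diagram chase using the underlying adjunction $\Gamma \dashv \nn$ yields an adjunction $\Gamma \dashv \nn^C$ on comodules.

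To upgrade to a Quillen equivalence, I would deploy the fibrantly generated model structures on comodule categories developed in this paper, in which fibrant replacements are realized as Postnikov towers. Cofibrations and weak equivalences in both comodule model categories are detected on the underlying levels in $\chfo$ and $\smodk$ respectively, so $\Gamma$ preserves them and the adjunction is Quillen. For the equivalence, note that on underlying categories $\Gamma$ and $\nn$ are already inverse equivalences (up to natural isomorphism), so the underived unit and counit of $\Gamma \dashv \nn^C$ are isomorphisms of underlying objects; composing with a fibrant replacement in $\comod_C(\chfo)$ (resp.\ $\comod_{\Gamma(C)}(\smodk)$) then gives derived unit/counit that are weak equivalences. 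Simple connectivity of $C$ is used to guarantee convergence of the Postnikov construction.

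For compatibility with the derived cotensor product, recall that $M \, \square_C N$ is computed, via the Postnikov/cobar machinery of this paper, as a homotopy limit of tensor products $M \otimes C^{\otimes n} \otimes N$. Because $\Gamma$ and $\nn$ form a weak symmetric monoidal Quillen equivalence between $\chfo$ and $\smodk$, the Eilenberg--Zilber and Alexander--Whitney maps induce level-wise weak equivalences of cobar complexes. Invoking the fibrantly generated framework, which packages these level-wise equivalences into an equivalence of homotopy limits, one obtains a natural weak equivalence $\Gamma(M \, \square_C N) \simeq \Gamma(M) \, \square_{\Gamma(C)} \Gamma(N)$ in the homotopy category, proving compatibility with the derived cotensor products.

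The main obstacle is precisely this last step: the cotensor product is a limit, and Quillen left adjoints do not in general preserve limits, so one cannot formally deduce compatibility from the monoidal Dold--Kan equivalence in $\chfo$ versus $\smodk$. The justification for the fibrantly generated apparatus developed earlier in the paper is exactly that homotopy limits of fibrant comodules admit explicit Postnikov descriptions whose levels are controlled by the monoidal structure on the base, reducing cotensor comparison to the level-wise Eilenberg--Zilber/Alexander--Whitney comparisons. Finally, the $\infty$-categorical upgrade, equivalence of $\comodinf$ as symmetric monoidal $\infty$-categories, follows by invoking the recognition principle for symmetric monoidal $\infty$-categories presented by weak symmetric monoidal Quillen equivalences, as established in \cite{connectivecomod}.
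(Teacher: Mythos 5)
Your overall architecture (left-induced model structures, Postnikov towers as fibrant replacements, reduction to levelwise comparisons via Eilenberg--Zilber) matches the paper's, but two of your key steps contain genuine gaps.

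First, the claim that ``the underived unit and counit of $\Gamma \dashv \nn^C$ are isomorphisms of underlying objects'' is false, and it is precisely the point where the real work lies. The right adjoint on comodules is \emph{not} $\nn$ equipped with a coaction: since $\nn$ is only lax monoidal (via Eilenberg--Zilber), $\nn(X)$ carries no natural $C$-coaction, and $\nn^C(X)$ must be defined as the equalizer of $\nn(X)\otimes C \rightrightarrows \nn(X\otimes \Gamma(C))\otimes C$. Already on a cofree comodule one has $\nn^C(M\otimes\Gamma(C))\cong \nn(M)\otimes C$, whose underlying chain complex differs from $\nn(M\otimes\Gamma(C))$ --- they are related only by the Eilenberg--Zilber quasi-isomorphism. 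So the counit $\Gamma\nn^C(X)\to X$ is not an isomorphism, only a weak equivalence, and proving this is the content of the paper's argument: one runs an induction up the Postnikov tower of a fibrant $X$, checks the cofree case using the weak monoidal Quillen equivalence, shows $\nn^C$ and $\Gamma$ preserve the relevant homotopy pullbacks, and uses stabilization in each degree to identify the limit of $\{\nn^C(X(n))\}$ with $\nn^C(\widetilde{X})$ and with the homotopy limit. Your proposal skips all of this by asserting an isomorphism that does not hold.

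Second, your cotensor comparison routes through the cobar resolution, describing $M\,\square_C\,N$ as a homotopy limit of $M\otimes C^{\otimes n}\otimes N$. The paper does not do this, and the step ``the fibrantly generated framework packages these level-wise equivalences into an equivalence of homotopy limits'' is not justified: the cobar object is cosimplicial, so its totalization is a homotopy limit over $\Delta$, whereas the stabilization machinery of this paper (Lemma \ref{lem: cofree preserves stabilization of towers}, Lemma \ref{lem: postnikov of cotensor}) applies to $\bn$-indexed towers that stabilize in each degree. You would additionally need to prove that the underived cotensor product of fibrant comodules agrees with the cobar totalization and that $\Gamma$ and $\nn^C$ commute with that totalization. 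The paper instead proves the comparison map $\nn^C(X)\,\square_C\,\nn^C(Y)\to \nn^C(X\,\square_{\Gamma(C)}\,Y)$ is a weak equivalence by a double induction on the Postnikov towers of $X$ and of $Y$, reducing to cofree stages where Lemma \ref{lem: cotensor of cofree formula} converts the cotensor product into an ordinary tensor product and the Eilenberg--Zilber comparison applies. A minor further imprecision: the Alexander--Whitney structure on $\nn$ is comonoidal but \emph{not} symmetric, so it cannot be the structure underlying the symmetric monoidal comparison; the relevant oplax symmetric structure on $\Gamma$ is the mate of the Eilenberg--Zilber map.
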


One of the consequences of this result is an algebraic description of rational $A$-theory of spaces.
Given a space $X$, recall that $A(X)$ is defined to be the $K$-theory of the Waldhausen category of finite retractive spaces over $X$ with cofibrations and weak equivalences created in simplicial sets \cite{waldy}.
For any generalized reduced homology $\mathcal{E}_*$, let $A(X; \mathcal{E}_*)$ be the Waldhausen $K$-theory of the same category with the same cofibrations, but replacing the usual weak equivalences of simplicial sets by $\mathcal{E}_*$-equivalences. 
Hess-Shipley showed in \cite[1.3]{HSWald} that $A(X;\mathcal{E}_*)$ is equivalent to the $K$-theory of $X_+$-comodules in pointed spaces, localized at $\mathcal{E}_*$. 
Thanks to the result above, we are able to provide a description of the rational $A$-theory of a space $X$ as the $K$-theory of perfect chain complexes with a comodule structure over the singular chain complex $C_*(X; \mathbb{Q})$.

\begin{thm}[{Corollary \ref{cor: rational A-theory}}]
For any simply connected simplicial set $X$, there is a natural equivalence of $K$-theory spectra:
\[
A(X;  H\mathbb{Q}_*)\simeq K(\comod_{C_*(X; \mathbb{Q})}^\textup{perf}).
\]
\end{thm}

A future project will extend the trace method of \cite[1.4]{cothhshadow} to provide a trace into the coHochschild homology $K(\comod_{C_*(X; \mathbb{Q})}^\textup{perf})\rightarrow \mathsf{coHH}(C_*(X; \mathbb{Q}))$, as introduced by Hess-Parent-Scott \cite{coHH}, which relates to the usual Dennis trace if $X$ is finite by \cite{ozgur}.

To prove our main theorem, we dualize the methods in \cite{monmodSS}. Following \cite{hess1} and \cite{left1}, we use the notion of \emph{fibrantly generated} model categories. CW-complexes provide an inductive cofibrant replacement for spaces. This was generalized into the notion of cofibrantly generated model categories. We instead provide \emph{Postnikov towers} as an inductive fibrant replacement. Because of the lack of ``cosmallness" in practice, we cannot apply the dual of the small object argument. Instead we provide ad-hoc \emph{Postnikov presentations}: (acyclic) fibrations are retracts of maps built from pullbacks and transfinite towers of generating (acyclic) fibrations. 
Moreover, the cotensor product on comodules behaves well with fibrant objects instead of cofibrant objects. We introduce the notion of \emph{fibrant-compatible} symmetric monoidal model categories that allows us to derive the cotensor product and obtain a symmetric monoidal structure on the homotopy category and the underlying $\infty$-category. Dualizing \cite{monmodSS}, we construct \emph{weak opmonoidal Quillen equivalences} that induce an equivalence of the derived cotensor product.

% Previously, Postnikov presentations were introduced to lift model structures from a left adjoint, see \cite{hess1, left1}. A different and more efficient approach was given \cite{left2, left3}. Our paper shows that Postnikov presentations of model categories still provide an efficient tool to study homotopy limits and derived comodules. One major disadvantage is that we must specify by hand these Postnikov presentations and they are not automatically constructed in practice.

\subsubsection*{On the cocommutativity condition} For the sake of simplicity, we focus in this paper on coalgebras that are cocommutative. One can instead consider non-cocommutative coalgebras and then consider bicomodules instead. The cotensor product provides then a non-symmetric monoidal structure and all the results in this paper remain valid. We provide further details in \cite{cothhshadow}.

\subsubsection*{On the global dimension zero condition} Throughout this paper, we work exclusively with $\k$ a commutative ring with global dimension zero. There are several reasons this condition is imposed. First, it induces that every object is cofibrant and fibrant in chain complexes and simplicial modules, and the projective and injective model structures are equal. In particular, the model structure on comodules is left-induced from a nice monoidal model category. In \cite{left2}, it was shown that the model structures for comodules are left-induced from injective model structures which are in general not monoidal model categories. This creates several issues to understand the induced homotopy theory on comodules. Moreover, as every module is flat, the tensor product preserves finite limits. This allows us to understand finite limits of comodules, which is essential for our main theorem.
Furthermore, the cotensor product of comodules is \emph{not} a comodule unless the coalgebra is flat.

\subsubsection*{On the simply connected condition}
The simply connected requirement on the coalgebra is crucial for Theorem \ref{thm: cocell presentation of comodules over simply connected} and thus for Theorem \ref{Main theorem}. It allows us to inductively build our fibrant replacement. 
Moreover, although we make no use of this fact in this paper, as $C$ is a simply connected coalgebra, then the derived category of comodules over $C$ is equivalent to the derived category of modules over its cobar complex $\Omega C$, see \cite{leo}. This Koszul duality does not come from a Quillen equivalence between model structures and results in this paper are not dual from the model structure on modules over $\Omega C$. 

\subsection*{Outline} In Section \ref{Sec: terminology}, we introduce all the dual notions and methods of \cite{monmodSS}: fibrantly generated model categories, Postnikov presentations of a model category, left-induced model categories, fibrant-compatible monoidal structure on model categories and weak opmonoidal Quillen equivalences between them. In Section \ref{Sec: example on chains}, we provide an example of a Postnikov presentation for unbounded chain complexes. It provides inductive methods to compute homotopy limits of chain complexes. In Section \ref{Sec: example on comodules}, we recall some of the results in \cite{connectivecomod} and show that comodules also admit an efficient Postnikov presentation. Finally, we apply our methods to prove our main result on Dold-Kan correspondence for comodules and $A$-theory in Section \ref{section: dold-kan correspondence}.

\subsection*{Acknowledgment}
The results here are part of my PhD thesis \cite{phd}, and as such, I would like to express my gratitude to my advisor Brooke Shipley for her help and guidance throughout the years. Special thanks to Kathryn Hess for many fruitful conversations. I would also like to thank Haldun Özgür Bayındır, Maxine Calle and David Chan for their help.

\subsection*{Notation} We present here the notation and terminology used throughout this paper.

\begin{enumerate}
\item Given a model category $\M$ with a class of weak equivalences $\W$, we denote by $\M_c$ and $\M_f$ its subcategories spanned by cofibrant and fibrant objects respectively. We denote  the\emph{ underlying $\infty$-category} of $\M$ by $\N(\M_c)[\W^{-1}]$ as in \cite[1.3.4.15]{lurie1}, or simply $\N(\M)[\W^{-1}]$ if every object is cofibrant. If $\M$ is a symmetric monoidal model category (in the sense of \cite[4.2.6]{hovey}), then  $\N(\M_c)[\W^{-1}]$ is a symmetric monoidal $\infty$-category via the derived tensor product, see \cite[4.1.7.4]{lurie1}. The notation stems from a localization $\N(\M)\rightarrow \N(\M)[\W^{-1}]$ of $\infty$-categories, where $\N$ denotes the nerve of a category. When $\M$ is combinatorial, the underlying $\infty$-category of $\M$ is also equivalent to $\N(\M_f)[\W^{-1}]$ by \cite[1.3.4.16]{lurie1}.

\item The letter $\k$ shall always denote a commutative ring with global dimension zero. In other words, it is a finite product of fields, i.e.  a commutative ring $\k$ such that it is a product in rings
$
\k=\k_1 \times \cdots \times \k_n$,
where each $\k_i$ is a field, for some $1\leq n < \infty$.
In the literature, such rings are referred to as \emph{commutative semisimple Artinian rings}.

\item Let $\chf$ denote the category of unbounded chain complexes of $\k$-modules (graded homologically). Denote by $\chfo$ the category of non-negative chain complexes over $\k$. Both categories are endowed with a symmetric monoidal structure. The tensor product of two (possibly non-negative) chain complexes $X$ and $Y$ is defined by:
\[
(X\otimes Y)_n=\bigoplus_{i+j=n} X_i\otimes_\k Y_j,
\]
with differential given on homogeneous elements by:
\[
d(x\otimes y)=dx \otimes y + (-1)^{\vert x \vert}x\otimes dy.
\]
Let $\smodk$ be the category of simplicial $\k$-modules. It is endowed with a symmetric monoidal structure with tensor product defined dimensionwise. We denote the tensor of all three categories above simply as $\otimes$. The monoidal unit is denoted $\k$, and is either the chain complex $\k$ concentrated in degree zero, or the simplicial constant module on $\k$.

\item The category $\chf$ is endowed with a model structure in which weak equivalences are quasi-isomor\-phisms, cofibrations are monomorphisms, fibrations are epimorphisms. The category $\chfo$ is endowed with a model structure in which weak equivalence are quasi-isomorphisms. cofibrations are monomorphisms, and fibrations are positive levelwise epimorphisms. The category $\smodk$ is endowed with a model structure in which weak equivalences are weak homotopy equivalences, cofibrations are monomorphisms and fibrations are Kan fibrations. These are all combinatorial and symmetric monoidal model categories, see \cite{hovey} and \cite{monmodSS} for details. Notice that every object is cofibrant and fibrant.

\item Denote by $\NN\colon\smodk\r\chfo$ the normalization functor which is an equivalence of categories. Its inverse is denoted by $\Gamma\colon\chfo\r\smodk$. From \cite{monmodSS}, we obtain two weak monoidal Quillen equivalences:
\begin{equation}\label{eq: dold-kan sym mon}
\begin{tikzcd}[column sep=large]
\chfo\ar[shift left=2]{r}{\Gamma} & \ar[shift left=2]{l}{\mathsf{N}}[swap]{\perp} \smodk,
\end{tikzcd}
\end{equation}
and:
\begin{equation}\label{eq: dold-kan lax mon}
\begin{tikzcd}[column sep=large]
\smodk \ar[shift left=2]{r}{\mathsf{N}} & \ar[shift left=2]{l}{\Gamma}[swap]{\perp}\chfo .
\end{tikzcd}
\end{equation}
In the adjunction (\ref{eq: dold-kan sym mon}), the normalization functor $\NN$ is a right adjoint and is lax symmetric monoidal via the Eilenberg-Zilber map. 
In the adjunction (\ref{eq: dold-kan lax mon}), the normalization functor $\NN$ is a left adjoint and is oplax (but not symmetric) monoidal via the Alexander-Whitney map.

\end{enumerate}

%%Although homotopy coherent algebras and modules in a monoidal model category correspond to strictly associative modules (see XXX), the dual statement for coalgebras and comodules is uncertain. For instance, homotopy coherent coalgebras in $\smod_R$ and $\cho$ are not corresponding to strictly coassociative coalgebras. Nevertheless, for $R=\k$ a commutative ring with global dimension zero, and $C$ a simply connected strictly coassociative coalgebra in $\cho$, we proved in XXX that homotopy coherent comodules in $\chfo$ over $C$ correspond to strictly coassociative comodules in $\chfo$.

 %%but not of symmetric monoidal categories. From \cite{monmodSS}, it induces two weak monoidal Quillen equivalences: This makes the derived Dold-Kan correspondence an equivalence of symmetric monoidal $\infty$-categories on the Dwyer-Kan localizations, see more details in \cite{coalginDK}.

\section{Postnikov Presentations of Model Categories}\label{Sec: terminology}

One of the main tools of model categories is to assume the structure to be \emph{cofibrantly generated} by a pair of sets (see definition in \cite[2.1.17]{hovey}). 
If in addition the category is presentable, we say it is \emph{combinatorial}. 
In this case, cofibrations and acyclic cofibrations are retracts of maps built out of pushouts and transfinite compositions, and we can inductively construct a cofibrant replacement. 
Given a symmetric monoidal model category $\M$ (as in \cite[4.2.6]{hovey}) in which the tensor product interplays well with the generating acyclic cofibrations (see \cite[3.3]{algSS}), then one can guarantee a nice model structure for algebras and modules in $\M$, see \cite[4.1]{algSS}. 
Moreover, given a weak monoidal Quillen equivalence (as in \cite[3.6]{monmodSS}) between symmetric monoidal model categories, the equivalence lifts to a Quillen equivalence between algebras and modules, see \cite[3.12]{monmodSS}. This induces an equivalence of symmetric monoidal $\infty$-categories on the underlying $\infty$-categories of the monoidal model categories, see \cite[2.13]{coalginDK}.

In this section, following \cite{left1}, we recall the dual concept of \emph{fibrantly generated} model categories. We make no requirement of ``cosmallness'' as it is rarely satisfied in practice. Instead when fibrations and acyclic fibrations are retracts of maps built out of pullbacks and towers, we say the model category admits a \emph{Postnikov presentation}. In \cite{left2, left3}, model structures on monoidal categories are lifted to categories of coalgebras and comodules. However there is no guarantee that Postnikov presentations are lifted. We present here the notion of \emph{fibrant-compatible} monoidal structure on a model category which ensures that the homotopy category is endowed with a monoidal structure via a right derived tensor product. We also introduce the notion of \emph{weak opmonoidal Quillen equivalences} which provides a compatibility of the right derived tensor products.

\subsection{Postnikov presentation}

We recall the definition of Postnikov presentations, introduced by Kathryn Hess, which is dual to cellular presentations and appeared in \cite{hess1}, \cite{HScomonad} and \cite{left1}.
We begin with the dual notion of relative cell complex \cite[2.2.9]{hovey}.

\begin{defi}[{\cite[5.12]{hess1}}]
Let $\X$ be a class of morphisms in a category $\C$ closed under pullbacks. Let $\lambda$ be an ordinal. Given a functor $Y\colon\lambda\op\rightarrow \C$ such that for all $\beta <\lambda$, the morphism $Y_{\beta+1}\rightarrow Y_\beta$ fits into the pullback diagram
\[
\begin{tikzcd}
Y_{\beta+1} \ar{r} \ar{d} \pull & X'_{\beta+1} \ar{d} \\
Y_\beta \ar{r} & X_{\beta+1} 
\end{tikzcd}
\]
where $X'_{\beta +1}\rightarrow X_{\beta +1}$ is some morphism in $\X$, and $Y_\beta \rightarrow X_{\beta+1}$ is a morphism in $\C$, and we write
\[
Y_\gamma:=\underset{\beta <\gamma}{\lim} Y_\beta
\]
for any limit ordinal $\gamma <\lambda$. We say that the composition of the tower $Y$:
\[
\llim{\lambda\op} Y_\beta \longrightarrow Y_0, 
\]
if it exists, is a  \emph{$\X$-Postnikov tower}. The class of all $\X$-Postnikov towers  is denoted by $\Post_\X$.
%%We say $Y$ is an \emph{$\X$-cocell complex} if its unique map to the terminal object in $\C$ is a relative $\X$-cocell complex.
\end{defi}

\begin{prop}[{\cite[2.10]{left1}}]\label{prop: closed post}
If $\C$ is a complete category, the class $\Post_\X$ is the smallest class of morphisms in $\C$ containing $\X$ closed under composition, pullbacks and limits indexed by ordinals.
\end{prop}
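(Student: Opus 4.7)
The plan is to prove both inclusions separately: first, that $\Post_\X$ contains $\X$ and is stable under composition, pullback, and ordinal-indexed limits; second, that any class of morphisms in $\C$ with these stability properties and containing $\X$ must contain $\Post_\X$. The latter inclusion is essentially a restatement of the definition: a morphism $f \in \Post_\X$ arises as the composition $\llim{\lambda\op} Y_\beta \rightarrow Y_0$ of a tower built by iterated pullback of $\X$-morphisms with limits taken at limit ordinals, and completeness of $\C$ ensures these limits exist; thus $f$ is manufactured from $\X$-morphisms using only the allowed operations and must lie in any class with the listed closure properties.

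For the forward direction, I would verify the three closure properties in turn. Containment of $\X$ is immediate: any $\X$-morphism is the composition of a Postnikov tower of length two, where $Y_1 \rightarrow Y_0$ is realized as the pullback of itself along the identity. For stability under pullback, given a tower $Y$ whose composition realizes $f \in \Post_\X$ and a map $g\colon D \rightarrow Y_0$, I would pull back the tower termwise, setting $Y'_\beta := Y_\beta \times_{Y_0} D$. Pasting of pullback squares shows that each $Y'_{\beta+1} \rightarrow Y'_\beta$ remains the pullback of the same $\X$-morphism, while the fact that limits commute with limits guarantees the limit-ordinal condition is preserved; the composition of the pulled-back tower is exactly the pullback $g^*f$.

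The remaining and most delicate step is closure under composition and ordinal-indexed limits, which I would handle simultaneously. Given a tower $Z\colon \mu\op \rightarrow \C$ whose successor maps $Z_{\alpha+1} \rightarrow Z_\alpha$ each lie in $\Post_\X$, realized by towers $Y^{(\alpha)}$ of lengths $\lambda_\alpha$, the strategy is to splice them into a single tower indexed by the ordinal sum $\sum_\alpha \lambda_\alpha$, inserting $Y^{(\alpha)}$ above $Z_\alpha$ at the appropriate position. The main technical obstacle is ensuring the limit-ordinal compatibility of the spliced tower: at limit stages one must check that the iterated limit coincides with the limit over the combined indexing ordinal, which reduces to a standard cofinality argument for reindexing inverse limits along a lexicographic sum. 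Binary composition then drops out as the special case of a tower of finite length, and the whole closure package follows.
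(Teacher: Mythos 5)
Your argument is correct and is essentially the one the paper invokes: the paper's proof consists of a citation to the dual statements \cite[2.1.12, 2.1.13]{hovey} on relative cell complexes, and your termwise pullback of towers, splicing along ordinal sums with the cofinality check at limit stages, and the transfinite-induction reading of minimality are exactly the dualizations of those arguments. No gaps worth flagging, beyond the routine bookkeeping (e.g.\ treating identities as towers of length one) that you have already implicitly accounted for.
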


\begin{proof}
For a proof, see the dual statements in \cite[2.1.12, 2.1.13]{hovey}.
\end{proof}

\begin{prop}\label{prop: right adjoints preserves cocells}
Let $R\colon\C\rightarrow \D$ be a right adjoint between complete categories. Let $\X$ be a class of morphisms in $\C$. Then we have: $R(\Post_\X)\subseteq \Post_{R(\X)}$.
\end{prop}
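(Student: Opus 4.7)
The plan is to avoid manipulating transfinite towers by hand and instead leverage Proposition \ref{prop: closed post}, which characterizes $\Post_\X$ as the smallest class containing $\X$ that is closed under composition, pullbacks, and ordinal-indexed limits. The key facts I will use are that $R$, being a right adjoint between complete categories, preserves all small limits, and in particular preserves pullbacks and limits of towers indexed by $\lambda\op$.

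Concretely, I would introduce the class
\[
S = \{\, f \in \mathrm{Mor}(\C) \mid R(f) \in \Post_{R(\X)} \,\},
\]
and argue that $S$ contains $\X$ and is closed under composition, pullbacks, and ordinal-indexed limits; by the minimality clause of Proposition \ref{prop: closed post} this forces $\Post_\X \subseteq S$, which is exactly the desired containment $R(\Post_\X) \subseteq \Post_{R(\X)}$.

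Verifying the closure properties is where I would spend the actual (modest) work. The inclusion $\X \subseteq S$ is immediate since $R(\X) \subseteq \Post_{R(\X)}$. Closure under composition follows because $R$ is a functor, so $R(g\circ f) = R(g)\circ R(f)$, and $\Post_{R(\X)}$ is closed under composition in $\D$ by Proposition \ref{prop: closed post}. Closure under pullbacks follows because $R$ preserves pullbacks: if $f \in S$ and $f'$ is a pullback of $f$ along some morphism $g$ in $\C$, then $R(f')$ is a pullback of $R(f)$ along $R(g)$ in $\D$, and $\Post_{R(\X)}$ is closed under pullbacks. For ordinal-indexed limits, given a tower $Y : \lambda\op \to \C$ whose successor transition maps $Y_{\beta+1}\to Y_\beta$ lie in $S$, the functor $R$ preserves each of the limits $\lim_{\beta<\gamma} Y_\beta$ as well as $\lim_{\lambda\op} Y_\beta$, so the composition $R\bigl(\lim_{\lambda\op} Y_\beta\bigr) \to R(Y_0)$ is identified with the composition of the tower $R\circ Y$ in $\D$, whose transition maps lie in $\Post_{R(\X)}$, and hence lies in $\Post_{R(\X)}$ by closure under ordinal-indexed limits.

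I do not anticipate any real obstacle: the entire proof is a formal consequence of the minimality characterization in Proposition \ref{prop: closed post} together with the limit-preservation of right adjoints. If a more pedestrian presentation is preferred, one can instead start from a given Postnikov tower $Y$ in $\C$, apply $R$ levelwise, and check directly that each square
\[
\begin{tikzcd}
R(Y_{\beta+1}) \ar{r} \ar{d} & R(X'_{\beta+1}) \ar{d} \\
R(Y_\beta) \ar{r} & R(X_{\beta+1})
\end{tikzcd}
\]
remains a pullback and that limit stages are preserved; this produces an explicit $R(\X)$-Postnikov tower in $\D$ whose composition is $R$ applied to the original composition.
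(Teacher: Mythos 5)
Your proof is correct and is essentially the paper's argument: the paper's entire proof is the observation that right adjoints preserve limits, which is exactly the engine behind both your minimality argument via Proposition \ref{prop: closed post} and your ``pedestrian'' alternative of applying $R$ levelwise to the tower. You have simply spelled out the details that the paper leaves implicit.
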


\begin{proof}
The claim follows from the fact that right adjoints preserve limits.
\end{proof}

We also recall the dual notion of small object in a category.

\begin{defi}\label{def: cosmall}
Let $\D$ be a subcategory of a complete category $\C$. We say an object $A$ in $\C$ is \emph{cosmall relative to $\D$} if there is a cardinal $\kappa$ such that for all $\kappa$-filtered ordinals $\lambda$ (see \cite[2.1.2]{hovey}) and all $\lambda$-towers $Y:\lambda\rightarrow \D\op$, the induced map of sets:
\[
\collim{\beta<\lambda}\left( \Hom_\C(Y_\beta, A) \right)\longrightarrow\Hom_\C \left( \llim{\beta <\lambda} Y_\beta, A \right),
\]
is a bijection.
We say that $A$ is \emph{cosmall} if it is cosmall relative to $\C$ itself.
\end{defi}

\begin{ex}
The terminal object, if it exists, is always cosmall. In procategories, every object is cosmall. 

\end{ex}

\begin{ex}
A category $\C$ is \emph{copresentable} if its opposite category $\C\op$ is presentable. Therefore in {copresentable} categories, every object is cosmall. However, if $\C$ is presentable then $\C$ is \emph{not} copresentable unless $\C$ is a complete lattice, see \cite[1.64]{Adamek-Rosicky}.
\end{ex}

\begin{ex}\label{ex: cosmall are rare}
As noted after \cite[2.1.18]{hovey}, the only cosmall objects in the category of sets are the empty set and the one-point set.
In practice, objects in a presentable categories are rarely cosmall.
\end{ex}

The dual of the small object argument \cite[2.1.14]{hovey} is stated below. As per our above discussion, in practice, the result is not applicable in most categories of interest.

\begin{prop}[The cosmall object argument]\label{prop: cosmall}
Let $\C$ be a complete category and $\X$ be a set of morphisms in $\C$. If the codomains of maps in $\X$ are cosmall relative to $\Post_\X$, then every morphism $f$ of $\C$ can be factored functorially as:
\[
\begin{tikzcd}
A\ar{rr}{f}\ar{dr}[swap]{\gamma(f)} & & B\\
& C^f, \ar{ur}[swap]{\delta(f)}&
\end{tikzcd}
\]
where $\delta(f)$ is a $\X$-Postnikov tower and $\gamma(f)$ admits the left lifting property with respect to all maps in $\X$.
\end{prop}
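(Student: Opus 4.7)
The plan is to dualize Hovey's small object argument \cite[2.1.14]{hovey} step-by-step. Choose a cardinal $\kappa$ bounding the cosmallness indices of all codomains of maps in $\X$ (possible since $\X$ is a set), and fix a $\kappa$-filtered ordinal $\lambda$. Given $f:A\to B$, I will inductively construct a tower $\{Y_\beta\}_{\beta\le\lambda}$ under $A$ and over $B$. Set $Y_0=B$ with $A\to Y_0$ equal to $f$. At a successor stage $\beta+1$, let $S_\beta$ be the set of commutative squares whose right edge is some $g_s:X'_s\to X_s$ in $\X$, whose top edge is $a_s:A\to X'_s$, and whose bottom edge is $b_s:Y_\beta\to X_s$; form the pullback
\[
\begin{tikzcd}
Y_{\beta+1}\ar{r}\ar{d} & \prod_{s\in S_\beta}X'_s\ar{d}{\prod_s g_s}\\
Y_\beta\ar{r}{(b_s)_s} & \prod_{s\in S_\beta}X_s,
\end{tikzcd}
\]
and define $A\to Y_{\beta+1}$ via the universal property of the pullback from $A\to Y_\beta$ and $(a_s)_s:A\to\prod_s X'_s$. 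At limit stages $\gamma$, set $Y_\gamma=\llim{\beta<\gamma}Y_\beta$ with the induced map from $A$. Finally put $C^f=Y_\lambda$, with $\delta(f):C^f\to B$ the tower composition and $\gamma(f):A\to C^f$ the induced map. Functoriality of the assignment $f\mapsto(\gamma(f),\delta(f))$ is clear from the universal properties used.

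To verify $\delta(f)\in\Post_\X$, I will decompose each successor step $Y_{\beta+1}\to Y_\beta$---a pullback of the product $\prod_s g_s$---into a sub-tower of pullbacks of individual maps in $\X$: well-order $S_\beta$ and pull $Y_\beta$ back successively along each $g_s$ one coordinate at a time, using that pullbacks commute with limits to handle the limit stages of the inner tower. Combined with Proposition \ref{prop: closed post}, this shows $\delta(f)$ belongs to $\Post_\X$ after suitable reindexing of the outer tower.

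The substantive step is showing $\gamma(f)$ has the left lifting property against every $g:X'\to X$ in $\X$. Given a lifting problem with top $a:A\to X'$ and bottom $b:C^f\to X$, cosmallness of $X$ relative to $\Post_\X$, applied to the $\lambda$-tower $\{Y_\beta\}_{\beta<\lambda}$, produces a factorization $b:C^f\to Y_\beta\xrightarrow{\bar b}X$ for some $\beta<\lambda$. The commutativity $g\circ a=\bar b\circ(A\to Y_\beta)$ exhibits $(a,\bar b,g)$ as an element $s\in S_\beta$, so the stage $\beta+1$ pullback comes equipped with a projection $Y_{\beta+1}\to X'_s=X'$ at coordinate $s$. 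The composite $C^f\to Y_{\beta+1}\to X'$ is the desired lift; both compatibility triangles reduce to a direct diagram chase through the universal property of the pullback and the coherence of the maps $A\to Y_\beta$ along the tower.

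The main obstacle is not any single technical step---everything dualizes smoothly---but the scope of the hypothesis: as Example \ref{ex: cosmall are rare} indicates, cosmallness relative to $\Post_\X$ essentially never holds in presentable categories of interest, so in practice this result cannot be applied directly and Postnikov presentations must be verified by hand (as will be done in Sections \ref{Sec: example on chains} and \ref{Sec: example on comodules}).
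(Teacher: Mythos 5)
Your proof is correct and is essentially the paper's intended argument: the paper offers no written proof for this proposition, simply deferring to the dual of the small object argument in \cite[2.1.14]{hovey}, and your write-up is precisely that dualization (products and pullbacks replacing coproducts and pushouts, with the pullback of a product of maps in $\X$ decomposed into a tower so that each stage lies in $\Post_\X$, and cosmallness used to factor the lifting problem through a stage $Y_\beta$). The details all check out, including the verification that the coordinate projection out of $Y_{\beta+1}$ solves the lifting problem.
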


\begin{nota}\label{nota: retracts}
Given a class of morphisms $\mathsf{A}$ in $\C$, we denote by $\widehat{\mathsf{A}}$ its closure under formation of retracts.
\end{nota}

\begin{defi}\label{def: cocellular presentation global def}
A \emph{Postnikov presentation $(\X, \Z)$ of a model category $\M$} is a pair of classes of morphisms $\X$ and $\Z$ such that the class of fibrations is $\widehat{\Post_\X}$, the class of acyclic fibrations is $\widehat{\Post_\Z}$, and for any morphism $f\colon X\rightarrow Y$ in $\M$:
\begin{itemize}
\item[(a)] the morphism $f$ factors as:
\[
\begin{tikzcd}
X\ar{rr}{f} \ar{dr}[swap]{i} & & Y\\
& V \ar{ur}[swap]{q}
\end{tikzcd}
\]
where $i$ is a cofibration and $q$ is a $\Z$-Postnikov tower;
\item[(b)] the morphism $f$ factors as:
\[
\begin{tikzcd}
X\ar{rr}{f} \ar{dr}[swap]{j} & & Y\\
& W\ar{ur}[swap]{p}
\end{tikzcd}
\]
where $j$ is an acyclic cofibration and $p$ is a $\X$-Postnikov tower.
\end{itemize}
We say  in this case that the model category \emph{$\M$ is Postnikov presented by $(\X, \Z)$}.
%%We say the cocellular presentation $(\X, \Z)$ is \emph{functorial} if the two factorizations of the morphism $f:X\rightarrow Y$ above are functorial. In this case, we say $\M$ is \emph{functorially cocellularized by $(\X, \Z)$}.
\end{defi}

\begin{rem}\label{rem: cocellular can be trivial with all fib}
We do note require the classes $\X$ or $\Z$ to be sets.
Hence every model category is trivially Postnikov presented by the classes of all fibrations and acyclic fibrations. Although it was noted in \cite[2.13, 2.14]{left1} that this trivial presentation can occasionally be useful (see also \cite[B.4.1]{phd}), we use more interesting subclasses in this paper, see Theorems \ref{thm: cocell presentation of chains} and \ref{thm: cocell presentation of comodules over simply connected}.
\end{rem}

\begin{defi}\label{defi: inductive fibrant replacement}
Let $\M$ be a complete model category that admits a Postnikov presentation $(\X, \Z)$. Given any object $X$ in $\M$, we can provide an \emph{inductive fibrant replacement $FX$} as follows. Let $*$ be the terminal object of $\M$. There is an object $FX$ in $\M$ that factors the trivial map:
\[
\begin{tikzcd}
X\ar{rr} \ar[hook]{dr}[swap]{j} & & *\\
& FX, \ar[two heads]{ur}[swap]{p}
\end{tikzcd}
\]
where $j\colon X\stackrel{\sim}\hookrightarrow FX$ is an acyclic cofibration in $\M$, and $p$ is a $\X$-Postnikov tower.
This means that $FX$ can be defined as iterated maps of pullbacks along $\X$, starting with $(FX)_0=*$.
\end{defi}

In our examples, the above inductive fibrant replacement is a countable tower and its limit is a homotopy limit.

\begin{nota}
We denote by $\operatorname{tow}(\C)$ the category of countable towers in a complete category $\C$:
% Denote $\bn$ the poset $\{ 0 < 1 < 2 < \cdots\}$.
% Let $\C$ be any complete category. Objects in $\C^\bn$ are diagrams of shape $\bn$ and can be represented as (countable) towers in $\C$:
\[
\begin{tikzcd}
\cdots \ar{r}{f_3} & X(2) \ar{r} {f_2}& X(1) \ar{r}{f_1} & X(0).
\end{tikzcd}
\]
We denote such towers by $\{X(n)\}=(X(n), f_n)_{n\in \bn}$. The limit of the tower is denoted by $\lim_n X(n)$.
\end{nota}

\begin{prop}[{\cite[VI.1.1]{GJ}}]\label{prop: homotopy theory of towers}
Let $\M$ be a model category. Then the category of towers $\operatorname{tow}(\M)$ can be endowed with the Reedy model structure, where a map $\{ X(n)\} \rightarrow \{ Y(n) \}$ is a weak equivalence (respectively a cofibration), if each map $X(n)\rightarrow Y(n)$ is a weak equivalence (respectively a cofibration) in $\M$, for all $n \geq 0$. An object $\{X(n)\}$ is fibrant if and only if $X(0)$ is fibrant and all the maps $X(n+1)\rightarrow X(n)$ in the tower are fibrations in $\M$.
Moreover,  if we denote by $\iota\colon\M\rightarrow \operatorname{tow}(\M)$ the functor induced by the constant diagram, then we obtain a Quillen adjunction $\begin{tikzcd} \iota\colon M\ar[shift left=2]{r}[swap]{\perp} & \operatorname{tow}(\M){\colon \lim_n}. \ar[shift left =2]{l} \end{tikzcd}$
\end{prop}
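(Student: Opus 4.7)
The plan is to invoke the general Reedy model structure theorem after recognizing $\bn$ (with the convention that morphisms in the diagrams point in the opposite direction, i.e.\ we really work with $\bn^{\mathsf{op}}$) as an \emph{inverse} Reedy category with degree function $d(n)=n$. Every non-identity morphism strictly lowers degree, so all morphisms are ``degree-decreasing'' and there are no degree-raising morphisms, which is the defining property of an inverse Reedy category.

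First, I would apply the standard Reedy theorem (see for instance \cite[15.3.4]{hovey} style statements) to obtain a model structure on $\M^\bn$ in which weak equivalences and cofibrations are levelwise, and fibrations are determined by relative matching maps. Then I would compute the matching objects explicitly. For an inverse Reedy category, the matching object is a limit over the category of non-identity morphisms out of the object in question; in $\bn^{\mathsf{op}}$ the only non-identity morphisms out of $n$ factor through $n-1$, so the matching object of $\{X(m)\}$ at $n$ is $X(n-1)$ for $n\geq 1$, and is the terminal object at $n=0$. Consequently a map $\{X(n)\}\to \{Y(n)\}$ is a Reedy fibration precisely when $X(0)\to Y(0)$ is a fibration and, for each $n\geq 1$, the canonical map
\[
X(n)\longrightarrow Y(n)\times_{Y(n-1)} X(n-1)
\]
is a fibration in $\M$. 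Specializing to the map $\{X(n)\}\to *$ (the constant tower at the terminal object), the matching maps collapse to $X(n)\to X(n-1)$, giving the stated characterization of fibrant objects.

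For the Quillen adjunction, I would observe that $\iota\dashv \lim_n$ is a standard adjunction on any diagram category (the constant diagram functor is left adjoint to the limit). To see that $\iota$ is a left Quillen functor, note that if $f:A\to B$ is a (respectively acyclic) cofibration in $\M$, then $\iota f$ is levelwise $f$, hence a levelwise, thus Reedy, (acyclic) cofibration. This immediately gives the Quillen adjunction.

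The main potential obstacle is merely notational: one must be careful about the direction of the arrows and the Reedy matching object computation, since the convention $\M^\bn$ used here secretly treats $\bn$ as an inverse Reedy category. Once this is sorted out, the result is a direct application of the Reedy machinery and the verification of fibrations via the matching diagram $Y(n)\times_{Y(n-1)} X(n-1)$, exactly as recorded in \cite[VI.1.1]{GJ}.
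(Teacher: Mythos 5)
Your argument is correct and is exactly the standard one: the paper offers no proof of its own, citing \cite[VI.1.1]{GJ}, and that reference establishes the result precisely by treating towers as diagrams over an inverse (Reedy) category, computing the matching objects as $M_0X=\ast$ and $M_nX=X(n-1)$, and characterizing fibrations via the maps $X(n)\to Y(n)\times_{Y(n-1)}X(n-1)$. Your verification of the Quillen adjunction $\iota\dashv\lim_n$ (constant diagrams are levelwise, hence Reedy, (acyclic) cofibrations) is likewise the standard and correct argument.
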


\subsection{Fibrantly generated model categories}
We recall the notion of fibrantly generated as in \cite{left1}. Our definition of fibrantly generated makes \textbf{no assumption of cosmallness} and is not the dual definition of cofibrantly generated model categories. 

\begin{defi}\label{def: fib gen}
A model category is \emph{fibrantly generated by $(\X, \Z)$} if the cofibrations are precisely the morphisms that have the left lifting property  with respect to $\Z$, and the acyclic cofibrations are precisely the morphisms that have the left lifting property with respect to $\X$. We call $\X$ and $\Z$ the\emph{ generating fibrations} and \emph{generating acyclic fibrations} respectively.
\end{defi}

\begin{rem}
Just as for Remark \ref{rem: cocellular can be trivial with all fib}, since we allow $\X$ and $\Z$ to be classes, any model category is trivially fibrantly generated by its fibrations and acyclic fibrations. In practice, it is useful to have a smaller class to study the model structure. We provide example of non-trivial generating fibrations and acyclic fibrations in Theorem \ref{thm: fib gen of ch}.
\end{rem}

\begin{prop}\label{prop: cocellular imply fib gen}
If a model category is Postnikov presented by a pair of classes $(\X, \Z)$, then it is fibrantly generated by $(\X, \Z)$.
\end{prop}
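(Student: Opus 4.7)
The plan is to verify the two defining equalities for fibrantly generated model categories: that cofibrations coincide with the maps having the LLP against $\Z$, and that acyclic cofibrations coincide with the maps having the LLP against $\X$. The two arguments are entirely parallel, dualizing the roles of $\X$ and $\Z$ and of factorizations (a) and (b) in Definition \ref{def: cocellular presentation global def}, so I will focus on the first equality.

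The forward inclusion is essentially immediate. Since $\Z \subseteq \Post_\Z \subseteq \widehat{\Post_\Z}$, and the last class is by hypothesis precisely the acyclic fibrations of $\M$, any cofibration automatically has the LLP against $\Z$ by the model axioms.

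For the reverse inclusion, I would take a morphism $f \colon X \to Y$ with the LLP against $\Z$ and apply factorization (a) of Definition \ref{def: cocellular presentation global def} to write $f = q \circ i$, with $i$ a cofibration and $q$ a $\Z$-Postnikov tower. The key formal point is that, for a fixed $f$, the class of morphisms enjoying the RLP against $f$ is closed under pullbacks, transfinite limits, compositions, and retracts; combined with Proposition \ref{prop: closed post}, this promotes the hypothesis to the statement that $f$ has the LLP against every map in $\widehat{\Post_\Z}$. In particular $f$ lifts against $q$, producing a morphism $s \colon Y \to V$ with $q s = \id_Y$ and $s f = i$. The pair $(\id_X, s)$ and $(\id_X, q)$ then exhibits $f$ as a retract of the cofibration $i$, and since cofibrations are closed under retracts, $f$ is itself a cofibration. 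The case of acyclic cofibrations is identical after replacing $(\Z, \text{(a)})$ by $(\X, \text{(b)})$.

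I do not anticipate any real obstacle here: the whole argument is a routine manipulation of orthogonal classes, formally dual to the proof that in a cofibrantly generated model category the fibrations and acyclic fibrations are characterized by a right lifting property. The one step that warrants a moment of care is the closure passage from LLP against $\Z$ to LLP against $\widehat{\Post_\Z}$, but it follows at once by dualizing the standard stability properties of left-orthogonal classes.
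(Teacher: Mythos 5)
Your argument is correct and is exactly the paper's intended proof, which is stated tersely as ``direct consequence of the retract argument'' (\cite[1.1.9]{hovey}): the forward inclusion uses $\Z\subseteq\widehat{\Post_\Z}$, and the reverse inclusion uses factorization (a), the closure properties of right-orthogonal classes together with Proposition~\ref{prop: closed post} to lift the LLP from $\Z$ to $\widehat{\Post_\Z}$, and the retract argument. No gaps.
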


\begin{proof}
The claim is a direct consequence of the retract argument, see \cite[1.1.9]{hovey}.
\end{proof}

\begin{rem}
The converse of Proposition \ref{prop: cocellular imply fib gen} is true if $\X$ and $\Z$ are sets that permit the cosmall object argument.
However, this rarely happens in context of interest as seen in Example \ref{ex: cosmall are rare}.
\end{rem}

\subsection{Left-induced model categories}
We recall now the terminology of \cite{left2, left3}. Classically, under some assumptions, given a pair of adjoint functors $\begin{tikzcd}[column sep = large]
L:\mathsf{M} \ar[shift left=5pt]{r}[swap]{\perp} & \mathsf{A} \ar[shift left=5pt]{l}:R 
\end{tikzcd}
$ where $\M$ is endowed with a model structure, we can lift a model structure to $\A$ in which weak equivalences and fibrations are created by the right adjoint. See \cite[11.3.2]{hir}.

\begin{defi}
Let $\mathsf{M}$ be a model category and $\mathsf{A}$ be any category, such that there is a pair of adjoint functors:$\begin{tikzcd}[column sep = large]
L:\mathsf{A} \ar[shift left=5pt]{r}[swap]{\perp} & \mathsf{M}:R. \ar[shift left=5pt]{l} 
\end{tikzcd}
$
We say that the left adjoint $L\colon\mathsf{A}\rightarrow \mathsf{M}$ \emph{left-induces} a model structure on $\mathsf{A}$ if the category $\mathsf{A}$ can be endowed with a model structure where a morphism $f$ in $\mathsf{A}$ is defined to be a cofibration (respectively a weak equivalence) if $L(f)$ is a cofibration (respectively a weak equivalence) in $\mathsf{M}$. This model structure on $\mathsf{A}$, if it exists, is called \emph{the left-induced model structure from $\mathsf{M}$}.
\end{defi}

\begin{comment}
The next result is the dual of the Quillen path object argument and is in practice the way we verify left-induced model structures exist.

\begin{prop}[{\cite[2.2.1]{left2}}]\label{prop: left cylinder model}
Let $\M$ and $\mathsf{A}$ be presentable categories.
Suppose we have an adjunction:
\[
\begin{tikzcd}[column sep=large]
\mathsf{A} \ar[shift left=5pt]{r}{L}[swap]{\perp} & \mathsf{M}. \ar[shift left=5pt]{l}{R} 
\end{tikzcd}
\]
Suppose $\M$ is endowed with a cofibrantly generated model structure where all objects are cofibrant.
If, for every object $A$ in $\mathsf{A}$, there is a factorization in $\mathsf{A}$:
\[
\begin{tikzcd}
A \coprod A \ar{r}{j} & \mathsf{Cyl}(A) \ar{r}{p} & A,
\end{tikzcd}
\]
such that, after applying the left adjoint $L$, we obtain a good cylinder object in $\M$ (i.e. $L(j)$ is a cofibration and $L(p)$ is a weak equivalence in $\M$), then the left-induced model structure from $\M$ on $\mathsf{A}$ exists.
\end{prop}
\end{comment}

The following result guarantees that left-inducing from a combinatorial model category gives back a combinatorial model category. 

\begin{prop}[{\cite[2.23]{left1},\cite[3.3.4]{left2}}]\label{prop: left-induced preserve combinatorial}
Suppose $\mathsf{A}$ is a model category left-induced by a model category $\M$. Suppose both $\mathsf{A}$ and $\M$ are presentable. If $\M$ is cofibrantly generated by a pair of sets, then $\mathsf{A}$ is cofibrantly generated by a pair of sets.
\end{prop}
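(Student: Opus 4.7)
My plan is to verify J.\ Smith's recognition theorem for combinatorial model categories by producing sets of generating (acyclic) cofibrations for $\mathsf{A}$, using the combinatoriality of $\M$ together with the adjoint accessibility of $L$. Since $\mathsf{A}$ is already given as a model category (via left-induction), the only task is to show the generating classes can be replaced by sets.

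First I would fix a regular cardinal $\kappa$ large enough that both $\mathsf{A}$ and $\M$ are locally $\kappa$-presentable, that the left adjoint $L$ is $\kappa$-accessible (automatic, as left adjoints between locally presentable categories are accessible), and that the generating sets $I_\M, J_\M$ of $\M$ consist of morphisms between $\kappa$-presentable objects. At the arrow-category level, the induced functor $L^{\to}:\mathsf{A}^{\to}\rightarrow \M^{\to}$ is then accessible between locally presentable categories, so preimages of accessibly embedded subcategories under $L^{\to}$ remain accessibly embedded.

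Next I would identify the classes to generate. By definition of the left-induced structure, the cofibrations of $\mathsf{A}$ are $\mathrm{Cof}_\mathsf{A} := (L^{\to})^{-1}\bigl(\mathrm{cof}(\M)\bigr)$, and similarly for the acyclic cofibrations. Since $L$ preserves colimits, each of these classes is closed under pushouts, transfinite compositions and retracts. Moreover, because $\M$ is combinatorial, the classes $\mathrm{cof}(\M)$ and $\mathrm{cof}(\M)\cap W_\M$ are accessibly embedded in $\M^{\to}$ (a standard consequence of Smith's theorem and the fact that $W_\M$ is accessibly embedded). Taking preimages along the accessible functor $L^{\to}$, the corresponding classes in $\mathsf{A}^{\to}$ are again accessibly embedded.

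The heart of the proof is then the following general principle, which I would cite: in a locally presentable category, any class of morphisms which is accessibly embedded in the arrow category and closed under pushouts, transfinite compositions and retracts is the saturation of a set (see e.g.\ Smith/Beke's treatment, or \cite[2.23]{left1} itself). Applying this principle to $\mathrm{Cof}_\mathsf{A}$ and $\mathrm{Cof}_\mathsf{A}\cap W_\mathsf{A}$ yields sets $I_\mathsf{A}$ and $J_\mathsf{A}$. The small object argument then applies because every object of $\mathsf{A}$ is small relative to any class of morphisms (local presentability), and the fact that the saturations of $I_\mathsf{A}$ and $J_\mathsf{A}$ coincide with the cofibrations and acyclic cofibrations of the already-existing left-induced model structure is a direct unraveling of the definitions together with the retract argument. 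The main obstacle is the accessibly-embedded-implies-set-generated step, but this is by now a well-established result about combinatorial data in locally presentable categories, so the proof ultimately reduces to careful bookkeeping of accessibility.
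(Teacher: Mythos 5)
Your argument is correct and is essentially the proof given in the sources the paper cites for this proposition (the paper itself offers no proof beyond the references \cite[2.23]{left1} and \cite[3.3.4]{left2}): one shows $L^{-1}(\mathrm{cof}(\M))$ and $L^{-1}(\mathrm{cof}(\M)\cap \W_\M)$ are accessible, accessibly embedded subcategories of the arrow category closed under retracts, pushouts and transfinite composition, and invokes Smith's lemma that such a class in a locally presentable category is the weak saturation of a set. The only point worth tightening is that the key lemma requires the classes to be accessible as well as accessibly embedded, which holds here since $\mathrm{cof}(I_\M)$ is accessible for a set $I_\M$ and accessible subcategories are stable under preimage along the accessible functor $L^{\to}$.
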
 

\begin{prop}[{\cite[2.18]{left1}}]\label{prop: left induce preserve fib gen}
Suppose $\mathsf{A}$ is a model category left-induced by a model category $\M$, via an adjunction $
\begin{tikzcd}[column sep=large]
L:\mathsf{A} \ar[shift left=5pt]{r}[swap]{\perp} & \mathsf{M}:R. \ar[shift left=5pt]{l}
\end{tikzcd}
$
If $\M$ is fibrantly generated by $(\X, \Z)$, then $\mathsf{A}$ is fibrantly generated by $(R(\X), R(\Z))$.
\end{prop}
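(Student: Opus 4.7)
The plan is to reduce everything to the standard adjunction lifting property: for any $f$ in $\A$ and any $g$ in $\M$, the map $f$ has the left lifting property with respect to $R(g)$ if and only if $L(f)$ has the left lifting property with respect to $g$. This is the only nontrivial ingredient, and it follows from a routine diagram chase using the unit and counit of the adjunction $L \dashv R$. Once this is in place, the rest is bookkeeping.

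First I would characterize cofibrations. By definition of the left-induced model structure, a morphism $f$ in $\A$ is a cofibration if and only if $L(f)$ is a cofibration in $\M$. Since $\M$ is fibrantly generated by $(\X, \Z)$, this is equivalent to $L(f)$ having the left lifting property with respect to every morphism in $\Z$. Applying the adjunction lifting property, this is equivalent to $f$ having the left lifting property with respect to every morphism in $R(\Z)$. Hence the cofibrations in $\A$ are precisely those morphisms with the left lifting property against $R(\Z)$.

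Next I would run the same argument for acyclic cofibrations. Here I first need to observe that the acyclic cofibrations in $\A$ are exactly the morphisms $f$ such that $L(f)$ is an acyclic cofibration in $\M$: since both cofibrations and weak equivalences in $\A$ are created by $L$, a morphism $f$ in $\A$ is simultaneously a cofibration and a weak equivalence iff $L(f)$ is. The same chain of equivalences as before, now using that acyclic cofibrations in $\M$ are characterized by the left lifting property with respect to $\X$, identifies the acyclic cofibrations in $\A$ with the class of morphisms having the left lifting property with respect to $R(\X)$.

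The only conceptual step is the adjunction lifting property, and this is standard, so I do not anticipate any genuine obstacle. The one place to be careful is the identification of acyclic cofibrations in $\A$ with preimages of acyclic cofibrations in $\M$ under $L$; this is immediate from the definition of left-induced structure, but deserves to be stated explicitly because the definition only mentions cofibrations and weak equivalences separately, not their intersection.
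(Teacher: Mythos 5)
Your proof is correct and is exactly the standard argument; the paper itself simply cites \cite[2.18]{left1}, whose proof is precisely this adjunction-of-lifting-problems reduction, including the observation that $L$ creates acyclic cofibrations because it creates both cofibrations and weak equivalences separately. No gaps.
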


\begin{rem}\label{rem: left-induced does not preserve postnikov}
If $\M$ is Postnikov presented by $(\X, \Z)$, there is no reason to expect that $\mathsf{A}$ is Postnikov presented by $(R(\X), R(\Z))$, as we made no assumption of cosmallness in general.
\end{rem}

\subsection{Weak opmonoidal Quillen pair}  
If $(\C, \otimes)$ is a symmetric monoidal model category, then it endows the associated homotopy category $\mathsf{Ho}(\C)$ with a symmetric monoidal structure endowed with the derived functor $\otimes^\mathbb{L}$, see \cite[4.3.2]{hovey}. Essentially, the conditions ensure that the tensor product of two cofibrant objects is cofibrant, and the tensor preserves weak equivalence in both variable. Therefore we can total left derive the bifunctor $-\otimes-:\C\times \C\rightarrow \C$.
If we denote by $\C_c$ the full subcategory of cofibrant objects in $\C$, then the underlying $\infty$-category $\N(\C_c)\left[ \W^{-1}\right]$ is symmetric monoidal via the derived tensor product $\otimes^\mathbb{L}$, see \cite[4.1.7.4]{lurie1}.
%%f $\C$ is a model category, we denote $\C_c$ and $\C_f$ the full subcategory of cofibrant and fibrant objects in $\C$ respectively.
%%If $\C$ is a symmetric monoidal model category, it endows

We provide dual conditions such that the homotopy category $\mathsf{Ho}(\C)$ and the underlying $\infty$-category $\N(\C_f)\left[ \W^{-1}\right]$ are endowed with a symmetric monoidal structure. We shall weaken the dual definition of {\cite[4.2.6]{hovey}} as follows. %%The following definition is motivated by our situation, see Remark \ref{rem: cotensor does not preserve fibrations} below.

\begin{defi}\label{defi: weak comonoidal model cat}
Given a model category $\C$, we say a symmetric monoidal structure $(\M, \otimes, \I)$ is\textit{ fibrant-compatible} if:
% A \emph{(symmetric) comonoidal model category} $\C$ is a category endowed with both a model structure and a (symmetric) monoidal structure $(\C,\o, \I)$, such that: 
\begin{enumerate}[label=\upshape\textbf{(\roman*)}]
\item for any fibrant object $X$ in $\C$, the functors $X\otimes -:\C\rightarrow \C$ and $-\otimes X:\C\rightarrow \C$ preserve fibrant objects and weak equivalences between fibrant objects;
\item\label{item: comonoidal model cat} for any fibrant replacement $\I\rightarrow f\I$ of the unit, the induced morphism $X\cong \I\otimes X\rightarrow f\I\otimes X$ is a weak equivalence, for any fibrant object $X$.
\end{enumerate}
The requirement \ref{item: comonoidal model cat} is automatic if $\I$ is already fibrant
\end{defi}

\begin{ex}
If $\C$ is a symmetric monoidal model category $\C$ then it is in particular\textit{cofibrant-compatible}, in the sense that $\C\op$ is fibrant-compatible. The converse is not true. There may be cofibrant-compatible monoidal structures on model categories that do not form a monoidal model category as $X\otimes-\colon\C\r\C$  needs not to be a left Quillen functor.
% In particular, we do not claim that the functor $X\otimes-:\C\r\C$ is a right Quillen functor. It may not even be a right adjoint.
\end{ex}

\begin{ex}
Any symmetric monoidal model category $\C$ in which each object is fibrant and cofibrant is also fibrant-compatible. For instance $\C=\smodk, \chfo, \chf$.
\end{ex}

Recall from \cite[2.0.0.1]{lurie1} that to any symmetric monoidal category $\C$, one can define its operator category $\C^\otimes$, such that the nerve $\N(\C^\o)$ is a symmetric monoidal $\infty$-category whose underlying $\infty$-category is $\N(\C)$, see \cite[2.1.2.21]{lurie1}. Given $\C$ a model category with a fibrant-compatible symmetric monoidal structure and  fibrant unit, then $\C_f$ is a symmetric monoidal category. We can apply \cite[4.1.7.6]{lurie1} to obtain a symmetric monoidal structure on the homotopy category of $\C$ (just as in \cite[4.3.2]{hovey}) that is coherent up to higher homotopies.

\begin{prop}[{\cite[4.1.7.6]{lurie1}, \cite[A.7]{tch}}]\label{prop: compatibility of DK in sym mon}
Let $(\C, \otimes ,\I)$ be a model category with a fibrant-compatible symmetric monoidal structure. Suppose that $\bI$ is fibrant.
Then the underlying $\infty$-category $\N(\C_f)[{\W}^{-1}]$ of $\C$ can be given the structure of a symmetric monoidal $\infty$-category via the symmetric monoidal localization of $\N(\C_f^\o)$:
\[
\begin{tikzcd}
\N(\C_f^\o) \ar{r} & \N(\C_f)[{\W}^{-1}]^\o,
\end{tikzcd}
\] 
where ${\W}$ is the class of weak equivalences restricted to fibrant objects in $\C$.
\end{prop}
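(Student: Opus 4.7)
The plan is to apply Lurie's symmetric monoidal localization theorem \cite[4.1.7.6]{lurie1} to the full subcategory $\C_f$ of fibrant objects in $\C$. First I would verify that $\C_f$ is actually a symmetric monoidal subcategory of $\C$. By axiom (i) of Definition \ref{defi: weak comonoidal model cat}, for any fibrant $X$ the functor $X\otimes -$ preserves fibrant objects, so the tensor product of two fibrant objects is again fibrant. Since $\I$ is assumed fibrant, the monoidal unit lies in $\C_f$, so $\C_f^\otimes$ is a well-defined symmetric monoidal category, and its nerve $\N(\C_f^\otimes)$ is a symmetric monoidal $\infty$-category by \cite[2.1.2.21]{lurie1}.

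Next I would verify the compatibility hypothesis needed to invoke \cite[4.1.7.6]{lurie1}: for any fibrant $X$, the functor $X\otimes -$ must send weak equivalences between fibrant objects to weak equivalences. This is exactly the second half of axiom (i). By symmetry of the monoidal structure, the analogous statement for $-\otimes X$ holds as well. Therefore Lurie's theorem applies and produces a symmetric monoidal $\infty$-category $\N(\C_f)[\W^{-1}]^\otimes$ together with a symmetric monoidal functor
\[
\N(\C_f^\otimes)\longrightarrow \N(\C_f)[\W^{-1}]^\otimes
\]
exhibiting the target as the universal symmetric monoidal localization at $\W$. The underlying $\infty$-category of this symmetric monoidal $\infty$-category is $\N(\C_f)[\W^{-1}]$, which by \cite[1.3.4.16]{lurie1} is the underlying $\infty$-category of $\C$.

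The main obstacle I anticipate is the bookkeeping required to match the hypotheses of \cite[4.1.7.6]{lurie1} against those of Definition \ref{defi: weak comonoidal model cat}. Lurie's formulation requires the chosen class of edges to be closed under tensor with arbitrary objects of the $\infty$-category in question, so restricting from $\C$ to $\C_f$ is essential: axiom (i) only guarantees that tensoring with a \emph{fibrant} object preserves weak equivalences between fibrant objects. Note that axiom (ii) plays no role here, as it is only relevant when the unit is not fibrant; in our setting it is used precisely to justify the restriction to $\C_f$ being harmless, since for a general fibrant replacement $\I \to f\I$ one has a natural weak equivalence $X \simeq \I \otimes X \to f\I \otimes X$ on fibrant $X$, but with $\I$ already fibrant this is trivial.
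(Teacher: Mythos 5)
Your proposal is correct and follows essentially the same route as the paper: the paragraph preceding the proposition observes that $\C_f$ is a symmetric monoidal subcategory (using axiom (i) of Definition \ref{defi: weak comonoidal model cat} together with fibrancy of $\I$) and then invokes \cite[4.1.7.6]{lurie1}, with the compatibility of $\W$ with the tensor product on $\C_f$ supplied by the second half of axiom (i), exactly as you argue. Your closing observation that axiom (ii) is vacuous when $\I$ is fibrant matches the remark made right after Definition \ref{defi: weak comonoidal model cat}.
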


We can dualize the notion of a weak monoidal Quillen pair from {\cite[3.6]{monmodSS}}.

\begin{defi}\label{defi: weak comonoidal quillen pair}
Let $(\C, \o, \bI)$ and $(\D, \sm, \bJ)$ model categories with fibrant-compatible symmetric monoidal structures.
A \emph{weak opmonoidal Quillen pair} consists of a Quillen adjunction:
\[\begin{tikzcd}
L:(\C, \o, \bI) \ar[shift left=2]{r} & \ar[shift left=2]{l}[swap]{\perp} (\D, \sm, \bJ):R,
\end{tikzcd}
\]
where $R$ is lax monoidal such that the following two conditions hold.
\begin{enumerate}[label=\upshape\textbf{(\roman*)}]
\item\label{enum: weak comonoidal} For all fibrant objects $X$ and $Y$ in $\D$, the lax monoidal map:
\[
\begin{tikzcd}
R(X)\o R(Y) \ar{r} & R(X\Smash Y),
\end{tikzcd}
\]
is a weak equivalence in $\C$.
\item\label{enum; weak comonoidal 2} For some (hence any) fibrant replacement $\lambda\colon \bJ \stackrel{\sim}\longrightarrow f\bJ$ in $\D$, the composite map
\[
\begin{tikzcd}
\I \ar{r} & R(\bJ) \ar{r}{R(\lambda)} & R(f\bJ)
\end{tikzcd}
\]
is a weak equivalence in $\C$.
\end{enumerate}
A weak opmonoidal Quillen pair is a \emph{weak opmonoidal Quillen equivalence} if the underlying Quillen pair is a Quillen equivalence.
\end{defi}

\begin{ex}\label{ex: dold-kan is actually comonoidal}
Given any weak monoidal Quillen pair: \[\begin{tikzcd}
(\C, \o, \bI) \ar[shift left=2]{r}{L} & \ar[shift left=2]{l}{R}[swap]{\perp} (\D, \sm, \bJ),
\end{tikzcd}\] their opposite adjunction: \[\begin{tikzcd}
(\C\op, \o, \bI) \ar[shift right=2]{r}[swap]{L\op} & \ar[shift right=2]{l}{\perp}[swap]{R\op} (\D\op, \sm, \bJ),
\end{tikzcd}\] is a weak opmonoidal Quillen pair.
Because the usual weak monoidal Quillen pair usually requires a monoidal model structure rather than cofibrant-compatible, the converse is not true, but the full assumption of monoidal model structure is not necessary.
\end{ex}

\begin{ex}
The Dold-Kan weak monoidal Quillen equivalences (\ref{eq: dold-kan sym mon}) and (\ref{eq: dold-kan lax mon}) are also weak opmonoidal Quillen equivalences when $k$ is a commutative ring with global dimension zero. This follows from the fact that their unit and counit in the adjunctions are isomorphisms.
\end{ex}

In general, given a weak opmonoidal Quillen equivalence, we do not guarantee that it lifts to a Quillen equivalence on associated categories of coalgebras and comodules. In other words, we do not provide a dual version of \cite[3.12]{monmodSS}. The essential reason being that there is no guarantee in general to obtain a Postnikov presentation on left-induced model structures of coalgebras or comodules such that the limit of the Postnikov towers behaves well with respect to the forgetful functor.

Instead, we show that homotopy coherent (co)algebras and (co)modules are equivalent as $\infty$-categories. In Section \ref{section: dold-kan correspondence}, we provide an example of a weak opmonoidal Quillen pair that lifts to the associated categories of comodules.
One can adapt the proof in \cite{coalginDK} to obtain the following result. 

\begin{thm}[{\cite[2.13]{coalginDK}}]\label{thm: Weak monoidal eq imply strong in infinity}
Let $(\C, \o, \bI)$ and $(\D, \sm, \bJ)$ be model categories with fibrant-compatible symmetric monoidal structures with fibrant units. 
Let $\W_\C$ and $\W_\D$ be the classes of weak equivalence in $\C$ and $\D$ respectively.
Let: \[\begin{tikzcd}
L:(\C, \o, \bI) \ar[shift left=2]{r} & \ar[shift left=2]{l}[swap]{\perp} (\D, \sm, \bJ):R,
\end{tikzcd}\] be a weak opmonoidal Quillen pair.
Then the derived functor of  $R:\D \r \C$ induces a symmetric monoidal functor between the underlying $\infty$-categories:
\[
\begin{tikzcd}
\mathbb{R}:\N (\D_f) \left[\W_\D^{-1}\right] \ar{r} & \N(\C_f) \left[\W_\C^{-1}\right],
\end{tikzcd}
\]
where $\C_f\subseteq \C$ and $\D_f\subseteq \D$ are the full subcategories of fibrant objects.
If $L$ and  $R$ form a weak opmonoidal Quillen equivalence, then $\mathbb{R}$ is a symmetric monoidal equivalence of $\infty$-categories.
\end{thm}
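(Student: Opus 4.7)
The plan is to dualize the argument of \cite[2.13]{coalginDK}. Since both units $\bI$ and $\bJ$ are fibrant, Proposition \ref{prop: compatibility of DK in sym mon} equips the underlying $\infty$-categories $\N(\C_f)[\W_\C^{-1}]$ and $\N(\D_f)[\W_\D^{-1}]$ with symmetric monoidal structures, realized as symmetric monoidal localizations of $\N(\C_f^\o)$ and $\N(\D_f^\o)$. Because $R$ is a right Quillen functor, it preserves fibrant objects, and by Ken Brown's lemma it sends weak equivalences between fibrant objects to weak equivalences in $\C$. Thus $R$ restricts to $R_f:\D_f\r\C_f$, which descends to a functor $\mathbb{R}:\N(\D_f)[\W_\D^{-1}]\r\N(\C_f)[\W_\C^{-1}]$.

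Next I would promote $\mathbb{R}$ to a lax symmetric monoidal functor. The lax symmetric monoidal functor $R$ restricts to a lax symmetric monoidal $R_f:\D_f\r\C_f$, which induces a morphism of $\infty$-operads $\N(R_f^\o):\N(\D_f^\o)\r\N(\C_f^\o)$. Composing with the symmetric monoidal localizations of Proposition \ref{prop: compatibility of DK in sym mon} and invoking the universal property of such localizations from \cite{lurie1} yields the desired lax symmetric monoidal structure on $\mathbb{R}$.

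The decisive step will be to upgrade $\mathbb{R}$ from lax to strong symmetric monoidal. A lax symmetric monoidal functor between symmetric monoidal $\infty$-categories is strong precisely when its comparison maps $\mathbb{R}(X)\o\mathbb{R}(Y)\r\mathbb{R}(X\sm Y)$ and $\I\r\mathbb{R}(\bJ)$ are equivalences. On fibrant $X$ and $Y$ these are modeled in $\C_f$ by the lax structure maps $R(X)\o R(Y)\r R(X\sm Y)$ and $\I\r R(\bJ)$, which are weak equivalences by conditions \ref{enum: weak comonoidal} and \ref{enum; weak comonoidal 2} of Definition \ref{defi: weak comonoidal quillen pair} (using fibrancy of $\bJ$ to take $f\bJ=\bJ$). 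Hence they become equivalences after localization, so $\mathbb{R}$ is in fact strong symmetric monoidal.

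Finally, if $(L,R)$ is a Quillen equivalence, then $\mathbb{R}$ is an equivalence of underlying $\infty$-categories by the standard comparison between Quillen equivalences and equivalences of underlying $\infty$-categories (the fibrant-object version of \cite[1.3.4.26]{lurie1}, using that both underlying $\infty$-categories are computed on fibrant objects). A strong symmetric monoidal functor whose underlying functor is an equivalence is automatically a symmetric monoidal equivalence, completing the proof. The main obstacle I anticipate is the careful handling of the symmetric monoidal localization—verifying that the lax structure of $R$ descends coherently to $\mathbb{R}$ and that its comparison maps in the $\infty$-category are identified with those of $R$ evaluated on fibrant objects—which is where Lurie's monoidal localization machinery from \cite{lurie1} is essential.
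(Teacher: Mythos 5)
Your proposal is correct and is precisely the dualization of \cite[2.13]{coalginDK} that the paper itself invokes (the paper gives no independent proof, only the instruction to adapt that argument): restrict the lax monoidal right Quillen functor to fibrant objects, pass through the symmetric monoidal localizations of Proposition \ref{prop: compatibility of DK in sym mon}, and use conditions \ref{enum: weak comonoidal} and \ref{enum; weak comonoidal 2} together with fibrancy of $\bJ$ to upgrade lax to strong. No gaps; this matches the intended proof.
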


\section{Example: Unbounded Chain Complexes}\label{Sec: example on chains}

We show here that the category $\chf$ of unbounded chain complexes over $\k$ is fibrantly generated by a pair of sets (Theorem \ref{thm: fib gen of ch}) and admits an efficient Postnikov presentation on its model structure (Theorem \ref{thm: cocell presentation of chains}). In particular, this provides new methods to compute homotopy limits in $\chf$ (Remark \ref{rem: homotopy limits in ch}) and thus in the $\infty$-category of modules over the Eilenberg-Mac Lane spectrum $H\k$. The result was proved in \cite{hess1} and \cite{left1} for finitely generated non-negative chain complexes over a field.  

\subsection{The generating fibrations} We present here sets of generating fibrations and acyclic fibrations for $\chf$. A similar result was proved in early unpublished versions of \cite{sore1} in \cite[3.1.11, 3.1.12]{sore0} for non-negative chain complexes over a field.

\begin{defi}\label{defi: D^n and S^n in ch}
Let $V$ be a $\k$-module. Let $n\in \mathbb{Z}$.
Denote the $n$-sphere over $V$ by $S^n(V)$, the chain complex that is $V$ concentrated in degree $n$ and zero elsewhere. 
Denote the $n$-disk over $V$ by $D^n(V)$, the chain complex that is $V$ concentrated in degree $n-1$ and $n$, with differential the identity.
We obtain the epimorphisms $D^n(V)\rightarrow S^n(V)$:
\[
\begin{tikzcd}
D^n(V)\ar{d}  & \cdots & 0\ar[equals]{d} \ar{l} & V\ar{l}\ar{d} & V\ar[equals]{l} \ar[equals]{d}& 0 \ar{l}\ar[equals]{d} & \cdots \ar{l}\\
S^n(V) & \cdots & 0 \ar{l} & 0\ar{l} & V\ar{l} & 0 \ar{l} & \cdots \ar{l}.
\end{tikzcd}
\]
This defines functors:
\[
\begin{tikzcd}
S^n(- ) : \mathsf{Mod}_\k \ar{r} & \chf, & D^n(-): \mathsf{Mod}_\k \ar{r} & \chf.
\end{tikzcd}
\]
The map defined above is natural, i.e. we have a natural transformation $D^n(-)\Rightarrow S^n(-)$, for all $n\in \bz$.
When $V=\k$, we simply write $D^n$ and $S^n$.
Notice that we have a short exact sequence of chain complexes:
\[
\begin{tikzcd}
0\ar{r} & S^{n-1}(V) \ar{r} & D^n(V) \ar{r} & S^n(V) \ar{r} & 0.
\end{tikzcd}
\]
\end{defi}

\begin{defi}\label{def: GENERATING FIB} 
Define $\ii$ and $\jj$ to be the following \emph{sets} of maps in $\chf$:
\[
\begin{tikzcd}
\ii=\{ D^n\longrightarrow S^n\}_{n\in\bz}, &\jj=\{ D^n\longrightarrow 0\}_{n\in \bz}.
\end{tikzcd}
\]
We thicken the sets $\ii$ and $\jj$ to \emph{classes} $\iio$ and $\jj_\oplus$ of morphisms in $\chf$:
\[
\iio :=   \Big\lbrace D^n(V) \longrightarrow S^n(V) \mid V \text{ any $\k$-module} \Big\rbrace_{n\in\bz},
\]
\[
\jj_\oplus :=   \Big\lbrace D^n(V) \longrightarrow 0 \mid V \text{ any $\k$-module} \Big\rbrace_{n\in\bz}.
\]
Clearly, the maps in $\ii$ and $\iio$ are fibrations in $\chf$ and the maps in $\jj$ and $\jj_\oplus$ are acyclic fibrations in $\chf$.
\end{defi}

\begin{rem}
When $\k$ is a field, as every $\k$-module is free, we get:
\[
\ii_\oplus\cong \left\lbrace  \bigoplus_{\lambda} D^n \longrightarrow \bigoplus_{\lambda} S^n\mid \lambda \text{ any ordinal}\right\rbrace_{n\in\bz},
\]
\[
\jj_\oplus \cong \left\lbrace  \bigoplus_{\lambda} D^n \longrightarrow 0\mid \lambda \text{ any ordinal}\right\rbrace_{n\in\bz}.
\]
\end{rem}

\begin{thm}\label{thm: fib gen of ch}
The model category of unbounded chain complexes $\chf$ is fibrantly generated by the pair of sets $(\ii, \jj)$.
\end{thm}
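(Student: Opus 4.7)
The plan is to use the two identifications
\[
\Hom_{\chf}(X, D^n) \cong \Hom_\k(X_{n-1}, \k), \qquad \Hom_{\chf}(X, S^n) \cong Z^n\bigl(\Hom_\k(X, \k)\bigr),
\]
together with the fact that $\k$, being a finite product of fields, is semisimple and hence an injective cogenerator of $\Mod_\k$. The first identification is read off in degree $n-1$; the second holds because a chain map $X\to S^n$ is exactly a cocycle $X_n\to\k$. The easy direction of each required equivalence is immediate from the lifting axioms: every map in $\jj$ is an acyclic fibration and every map in $\ii$ is a fibration, so cofibrations have LLP with respect to $\jj$ and acyclic cofibrations have LLP with respect to $\ii$.

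For the converse for $\jj$, I translate a lifting problem for $f\colon A\to B$ against $D^n\to 0$ into the extension of an arbitrary $\k$-linear map $A_{n-1}\to\k$ over $f_{n-1}$. Since $\k$ is an injective cogenerator, such extensions exist for every map iff $f_{n-1}$ is injective; ranging over $n$ shows that $\jj$-injectives are precisely the monomorphisms, which are the cofibrations of $\chf$.

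For the converse for $\ii$, I unpack a lifting problem against $D^n\to S^n$ as a pair $(g_{n-1}\colon A_{n-1}\to\k,\; h_n\colon B_n\to\k)$ with $h_nd_B=0$ and $h_nf_n=g_{n-1}d_A$, whose solution is $\ell_{n-1}\colon B_{n-1}\to\k$ with $\ell_{n-1}f_{n-1}=g_{n-1}$ and $\ell_{n-1}d_B=h_n$. Two specialisations are key: taking $h_n=0$ shows that every cocycle in $\Hom_\k(A,\k)$ extends to a cocycle in $\Hom_\k(B,\k)$; taking $g_{n-1}=0$ shows that every cocycle in $\Hom_\k(B,\k)$ vanishing on $f(A)$ is the coboundary of a cochain that also vanishes on $f(A)$. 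Using the first specialisation together with the bootstrap described below, $f$ is a monomorphism. Then setting $C=\coker f$ and using the resulting short exact sequence $0\to A\to B\to C\to 0$, the second specialisation says exactly that the subcomplex $\Hom_\k(C,\k)\hookrightarrow\Hom_\k(B,\k)$ is acyclic. Over semisimple $\k$, $\Hom_\k(-,\k)$ is exact with $H^n\Hom_\k(C,\k)\cong \Hom_\k(H_n(C),\k)$; the cogenerator property then forces $H_n(C)=0$ for all $n$, making $f$ a quasi-isomorphism and thus an acyclic cofibration.

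The main obstacle is the monomorphism step. Given $0\neq a\in\ker f_m$, I pick $g_m\colon A_m\to\k$ with $g_m(a)\neq 0$ and aim to produce $\ell_m\colon B_m\to\k$ with $\ell_m f_m = g_m$, which would yield the contradiction $0=\ell_m(f_m(a))=g_m(a)$. The naive choice $h_{m+1}=0$ forces $g_md_A=0$, which cannot detect $a$ when $a$ lies in $d_A A_{m+1}$. The remedy is a bootstrap: first apply the cocycle-extension specialisation (at index $m+2$) to extend the cocycle $g_md_A$ on $A$ to a cocycle $h_{m+1}$ on $B$ with $h_{m+1}f_{m+1}=g_md_A$; then invoke $\ii$-injectivity at index $m+1$ with the compatible pair $(g_m,h_{m+1})$ to extract the required $\ell_m$.
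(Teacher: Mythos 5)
Your proof is correct, but it takes a genuinely different route from the paper's. The paper proves both halves structurally: it first shows (Proposition \ref{prop: split in fields}) that over the semisimple ring $\k$ every chain complex splits as a product of disks and spheres $S^n(V)$, $D^n(V)$ for general modules $V$; it then deduces that $\llp(\jj)$-maps are monomorphisms by factoring an embedding of the source into an acyclic product of disks through $f$, and that $\llp(\ii)$-maps are acyclic by decomposing the cokernel and using the lifting property to kill its homology. Because the decomposition produces spheres and disks on arbitrary $V$, the paper must also introduce the thickened classes $\iio$, $\jj_\oplus$ and prove $\llp(\iio)=\llp(\ii)$, $\llp(\jj_\oplus)=\llp(\jj)$. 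You instead work degreewise through the representability isomorphisms $\Hom_{\chf}(X,D^n)\cong\Hom_\k(X_{n-1},\k)$ and $\Hom_{\chf}(X,S^n)\cong Z^n(\Hom_\k(X,\k))$, using only that $\k$ is an injective cogenerator of its module category; this is the precise dual of the classical identification of $\{S^{n-1}\to D^n\}$ as generating cofibrations, it avoids $\iio$ and $\jj_\oplus$ entirely, and it isolates exactly where semisimplicity is used. Your bootstrap for the monomorphism step is the right fix for a real subtlety: one cannot simply take the zero map to $S^n$ in the lifting square (the paper's brief reduction $\llp(\ii)\subseteq\llp(\jj)$ elides the existence of a compatible map $Y\to S^n$), and extending the cocycle $g_m d_A$ first via the $h=0$ specialisation resolves this cleanly. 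The trade-off is that the paper's splitting proposition is reused heavily in the Postnikov-presentation arguments later, whereas your argument, while shorter and more self-contained for this theorem, does not produce that auxiliary structure.
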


We prove the above theorem with Lemmas \ref{lem: Z in ch} and \ref{lem: X in ch} below.
Given any chain complex $X$, we denote the $n$-boundaries of $X$ by $B_n(X)$ and the $n$-cycles of $X$ by $Z_n(X)$. Recall the following result.

\begin{prop}\label{prop: split in fields}
Let $X$ be a chain complex in $\chf$. Then $X$ is split as a chain complex and we have a non-canonical decomposition: 
\[X_n\cong H_n(X)\oplus B_n(X)\oplus B_{n-1}(X).\]
In particular,  any chain complex $X$ can be decomposed non-canonically as product of disks and spheres:
\[
X \cong \prod_{n\in \mathbb{Z}} S^n(V_n) \oplus D^n(W_n),
\]
where $V_n=H_n(X)$ and $W_n=B_{n-1}(X)$.
\end{prop}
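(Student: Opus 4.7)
The plan is to exploit that $\k$ is a finite product of fields (hence semisimple Artinian), so \textbf{every} short exact sequence of $\k$-modules splits — every $\k$-module is both projective and injective. This is the only non-trivial input.

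First I would work degreewise. For each $n \in \bz$, the differential $d : X_n \to X_{n-1}$ has image $B_{n-1}(X)$ and kernel $Z_n(X)$, producing two short exact sequences of $\k$-modules:
\[
0 \longrightarrow Z_n(X) \longrightarrow X_n \longrightarrow B_{n-1}(X) \longrightarrow 0,
\]
\[
0 \longrightarrow B_n(X) \longrightarrow Z_n(X) \longrightarrow H_n(X) \longrightarrow 0.
\]
Semisimplicity of $\k$ makes both sequences split, and choosing splittings yields a (non-canonical) decomposition $X_n \cong H_n(X) \oplus B_n(X) \oplus B_{n-1}(X)$, which is the first claim.

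Next I would assemble these degreewise splittings into an isomorphism of chain complexes. The composite $H_n(X) \hookrightarrow Z_n(X) \hookrightarrow X_n$ lands in the cycles, so together the inclusions $\{H_n(X) \hookrightarrow X_n\}_{n \in \bz}$ define a subcomplex isomorphic to $S^n(H_n(X))$ in each degree. For the remaining summands, pick a section $s_n : B_{n-1}(X) \to X_n$ of the surjection $X_n \twoheadrightarrow B_{n-1}(X)$; then $d \circ s_n$ is the canonical inclusion $B_{n-1}(X) \hookrightarrow X_{n-1}$. Thus the pair $(s_n, \id)$ identifies $B_{n-1}(X)$ placed in degrees $n$ and $n-1$ with the disk $D^n(B_{n-1}(X))$ sitting as a subcomplex of $X$. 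Setting $V_n = H_n(X)$ and $W_n = B_{n-1}(X)$, these subcomplexes are degreewise complementary by the first part, giving the claimed isomorphism
\[
X \;\cong\; \prod_{n \in \bz} S^n(V_n) \oplus D^n(W_n).
\]
(The product and coproduct agree here since in each fixed degree only finitely many summands — namely three — are nonzero.)

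I do not anticipate a real obstacle: once semisimplicity is invoked, everything reduces to bookkeeping about how the chosen splittings fit into a bigraded picture. The only point deserving care is checking that the section $s_n$ together with the inclusion $B_{n-1}(X) \hookrightarrow X_{n-1}$ actually defines a chain map (which is immediate from $d s_n = \mathrm{incl}$), and that these disk subcomplexes together with the sphere subcomplexes exhaust $X$ in each degree — but this is exactly the content of the first displayed decomposition.
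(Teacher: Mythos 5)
Your proof is correct and follows essentially the same route as the paper: split the two short exact sequences $0\to Z_n(X)\to X_n\to B_{n-1}(X)\to 0$ and $0\to B_n(X)\to Z_n(X)\to H_n(X)\to 0$ using semisimplicity of $\k$, then check how the chosen splittings interact with the differentials. The paper leaves the final assembly into sphere and disk subcomplexes as "an investigation of the differentials," which you have simply spelled out.
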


\begin{nota}
Given any class of maps $\mathsf{A}$ in a category $\C$, we denote by $\llp(\mathcal{A})$ the class of maps in $\C$ having the left lifting property with respect to all maps in $\mathsf{A}$.
\end{nota}

\begin{lem}
We have the equalities of classes: $\llp(\ii_\oplus)=\llp(\ii)$ and $\llp(\jj_\oplus)=\llp(\jj)$ in $\chf$.
\end{lem}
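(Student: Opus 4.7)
The inclusion $\llp(\iio)\subseteq \llp(\ii)$ is immediate from $\ii\subseteq \iio$ (take $V=\k$), and likewise $\llp(\jj_\oplus)\subseteq \llp(\jj)$, so the content lies in the reverse inclusions. The plan is to treat the $\iio$-case in detail; the $\jj_\oplus$-case will follow by the same template.

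Given $f:A\to B$ in $\llp(\ii)$ and an arbitrary lifting square against $D^n(V)\to S^n(V)$, the strategy is to reduce existence of the lift to a \emph{pointwise} condition on $V$ and then supply that condition from $\llp(\ii)$ by restricting to each $v\in V$. First I would unravel the data: a chain map $\phi:D^n(V)\to A$ is equivalently a $\k$-linear map $\alpha:=\phi_n:V\to A_n$ (the degree $n-1$ component being forced to equal $d_A\circ \alpha$), while $\psi:S^n(V)\to B$ is a $\k$-linear map $\beta:V\to Z_n(B)$; the square commutes precisely when $f\circ \alpha=\beta$. For a prospective lift $\tilde\psi$, the upper triangle forces $\tilde\psi_n=\alpha$ as a map into $A_n$, so the lift exists if and only if $\alpha(V)\subseteq Z_n(A)$.

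To verify this condition, for each $v\in V$ I precompose $\phi$ and $\psi$ with the chain maps $D^n\to D^n(V)$ and $S^n\to S^n(V)$ induced by the $\k$-linear map $\k\to V$ sending $1\mapsto v$. This produces a lifting square against the generator $D^n\to S^n\in \ii$ whose top map sends the generator of $D^n$ to $\alpha(v)\in A_n$ and whose bottom map sends the generator of $S^n$ to the cycle $f(\alpha(v))\in Z_n(B)$. The hypothesis $f\in \llp(\ii)$ supplies a lift, forcing $d_A\alpha(v)=0$, i.e.\ $\alpha(v)\in Z_n(A)$. Running over $v\in V$ yields $\alpha(V)\subseteq Z_n(A)$; then $\alpha$ itself, now viewed as a $\k$-linear map into $Z_n(A)$, is the required lift $S^n(V)\to A$.

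The $\jj_\oplus$-case is strictly parallel but easier: a lifting square against $D^n(V)\to 0$ reduces to showing that a $\k$-linear map $\alpha:V\to A_n$ with $f\circ \alpha=0$ must itself be zero, which is obtained by applying $\llp(\jj)$ one $v\in V$ at a time. I do not foresee any real obstacle; the whole lemma is a bookkeeping argument enabled by the fact that domains and codomains of maps in $\iio$ (resp.~$\jj_\oplus$) are built from $V$ placed in only one or two degrees, so lifting problems decouple pointwise, and the $\k$-linearity of the produced lift comes for free because the lift is literally $\alpha$.
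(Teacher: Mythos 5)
Your proof solves the wrong lifting problem. In this paper $\llp(\ii)$ is the class of maps having the \emph{left} lifting property against $\ii$: in the test square the map $f\colon A\to B$ is the left-hand vertical arrow, the top arrow is a map $A\to D^n(V)$, the bottom arrow is a map $B\to S^n(V)$, and the required lift is a map $B\to D^n(V)$. You have instead placed $f$ on the right: your data are maps $\phi\colon D^n(V)\to A$ and $\psi\colon S^n(V)\to B$, and your lift is a map $S^n(V)\to A$, which is the statement that $f$ has the \emph{right} lifting property against $D^n(V)\to S^n(V)$. In particular the step where you invoke the hypothesis---``$f\in\llp(\ii)$ supplies a lift'' for a square with $f$ as the right-hand vertical---is not what the hypothesis provides; $\llp(\ii)$ only produces lifts $B\to D^n$ in squares where $f$ sits on the left. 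The same orientation error occurs in your treatment of the $\jj_\oplus$ case.

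Beyond the orientation, the correct problem does not ``decouple pointwise'' by precomposing with $\k\to V$, $1\mapsto v$: since $D^n(V)$ and $S^n(V)$ are codomains of the horizontal maps, one must instead \emph{post}compose with coordinate projections $V\to\k$, and this is exactly where the hypothesis on $\k$ enters. The paper's proof uses that $V$ is projective over the semisimple ring $\k$, hence a direct summand of a free module $\bigoplus_\lambda\k$, and that over such $\k$ the inclusion $\bigoplus_\lambda\k\hookrightarrow\prod_\lambda\k$ splits; this exhibits $D^n(V)\to S^n(V)$ as a retract of a product of copies of $D^n\to S^n$, against which lifts $B\to D^n$ are obtained one coordinate at a time from $f\in\llp(\ii)$, assembled into a map $B\to\prod_\lambda D^n$, and then restricted back to $D^n(V)$ along the retraction. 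Your closing claim that the lemma is pure bookkeeping with no real obstacle is therefore too optimistic: the reverse inclusions genuinely use that $\k$ is a finite product of fields.
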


\begin{proof}
Since $\ii\subseteq \ii_\oplus$, we get $\llp(\ii_\oplus)  \subseteq \llp(\ii)$.
Suppose now $f$ is in $\llp(\ii)$, let us argue it also belongs in $\llp(\ii_\oplus)$. 
Suppose we have a diagram:
\[
\begin{tikzcd}
X \ar{d}[swap]{f} \ar{r} & D^n(V) \ar{d} \\
Y \ar{r} & S^n(V),
\end{tikzcd}
\]
for some $\k$-module $V$. Since $V$ is projective, there is another $\k$-module $W$ such that $V\oplus W$ is free. Thus $V\oplus W\cong \bigoplus_\lambda \k$, for some basis $\lambda$.
In particular, the choice of the splitting $V\oplus W\rightarrow V$ induces the commutative diagram:
\[
\begin{tikzcd}
X \ar{d}[swap]{f} \ar{r} & D^n(V) \ar{d}\ar[hook]{r} & \displaystyle \ar[dashed,bend right]{l} \bigoplus_{\alpha\in \lambda} D^n_\alpha \ar{d} \ar[hook]{r} &   \ar[dashed,bend right]{l}\displaystyle\prod_{\alpha \in \lambda} D^n_\alpha \ar{r} \ar{d} &D^n_\alpha \ar{d} \\
Y \ar{r} & S^n(V) \ar[hook]{r} & \ar[dashed,bend right]{l}\displaystyle \bigoplus_{\alpha\in \lambda} S^n_\alpha \ar[hook]{r}  & \ar[dashed,bend right]{l} \displaystyle \prod_{\alpha \in \lambda} S^n_\alpha \ar{r}& S^n_\alpha,
\end{tikzcd}
\]
where $D^n_\alpha$ and $S^n_\alpha$ are a copies of $D^n$ and $S^n$.
Since $f$ is in $\llp(\ii)$, we obtain a lift $\ell_\alpha\colon Y \rightarrow D^n_\alpha$, for each $\alpha$. It induces a lift $\ell\colon Y \rightarrow \prod_\alpha D^n_\alpha$ which restricts to $Y\rightarrow D^n(V)$ via the retracts (dashed maps in the diagram).
\end{proof}

\begin{lem}\label{lem: Z in ch}
Maps in the set $\jj$ are the generating acyclic fibrations in $\chf$.
\end{lem}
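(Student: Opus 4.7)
The plan is to prove the equality $\llp(\jj) = \{\text{cofibrations in } \chf\}$. One inclusion is automatic: since $D^n$ is contractible, every map $D^n \to 0$ is an acyclic fibration, so cofibrations have the left lifting property against $\jj$ by the standard model category axioms. The substantive direction is to show that $\llp(\jj)$ is contained in the class of cofibrations (the monomorphisms).

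First I would invoke the preceding lemma to enlarge the test class from $\jj$ to $\jj_\oplus$, so that one may test the lifting property against $D^n(V) \to 0$ for arbitrary $\k$-modules $V$. Next, the key observation is that a chain map $\phi: X \to D^n(V)$ is exactly the data of an arbitrary $\k$-linear map $\phi_{n-1}: X_{n-1} \to V$: the degree-$n$ component is forced to be $\phi_{n-1} \circ d_X$ by the chain map condition, every other component vanishes, and the only non-trivial further compatibility reduces to $d_X \circ d_X = 0$. Consequently, $f: X \to Y$ has the left lifting property with respect to $D^n(V) \to 0$ for every $V$ if and only if every $\k$-linear map $X_{n-1} \to V$ extends along $f_{n-1}$. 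Testing with $V = X_{n-1}$ and the identity forces $f_{n-1}$ to admit a retraction, and conversely any retraction of $f_{n-1}$ produces extensions by post-composition.

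Ranging across $n \in \bz$, this identifies $\llp(\jj_\oplus)$ with the chain maps that are split monomorphisms in every degree. The final step is to recall that $\k$ is a finite product of fields, hence a semisimple ring, so every monomorphism of $\k$-modules is automatically split. This collapses ``degree-wise split monomorphism'' to ``degree-wise monomorphism'', which is precisely the class of cofibrations in $\chf$. I do not expect a real obstacle: the argument is a direct unpacking of the lifting problem against $D^n(V) \to 0$ together with the semisimplicity of $\k$.
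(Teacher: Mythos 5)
Your proof is correct, and it takes a genuinely different route from the paper's. The paper handles the substantive inclusion $\llp(\jj)\subseteq\{\text{monomorphisms}\}$ by first decomposing $X$ as a product of spheres and disks (Proposition \ref{prop: split in fields}), embedding $X$ into a product of disks via the inclusions $S^n(V_n)\hookrightarrow D^{n+1}(V_n)$, and then using the lifting property to extend that embedding along $f$, which forces $f$ to be a monomorphism. You instead exploit the fact that $D^n(V)$ represents the functor $X\mapsto \Hom_\k(X_{n-1},V)$ on $\chf$, so the lifting problem against $D^n(V)\to 0$ unwinds to an extension problem for linear maps out of $X_{n-1}$; testing on the identity of $X_{n-1}$ identifies $\llp(\jj_\oplus)$ with the class of degreewise split monomorphisms over any ring, and semisimplicity of $\k$ enters only at the very end to identify split monomorphisms with monomorphisms. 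Your argument is more elementary and more precise about where the hypothesis on $\k$ is used, and it yields an exact characterization of $\llp(\jj_\oplus)$ rather than just an inclusion; the paper's argument, by contrast, leans on the structural decomposition of chain complexes over a semisimple ring from the outset. Both arguments rely on the preceding lemma $\llp(\jj)=\llp(\jj_\oplus)$ to pass to arbitrary coefficient modules $V$, and both treat the easy inclusion (cofibrations lift against the acyclic fibrations $D^n\to 0$) as immediate.
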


\begin{proof}
Let $f\colon X\rightarrow Y$ be a map in $\llp(\jj)$, let us show it is a cofibration in $\chf$, i.e. a monomorphism.
Following Proposition \ref{prop: split in fields}, we decompose $X$ as:
\[
X \cong \prod_{n\in \mathbb{Z}} S^n(V_n) \oplus D^n(W_n).
\]
Then the canonical inclusions $S^n(V_n)\hookrightarrow D^{n+1}(V_n)$  induce a monomorphism $\iota$:
\[
\begin{tikzcd}
\iota\colon\displaystyle\prod_{n\in \mathbb{Z}} S^n(V_n) \oplus D^n(W_n) \ar[hook]{r} & \displaystyle \prod_{n\in \mathbb{Z}} D^{n+1}(V_n) \oplus D^n(W_n).
\end{tikzcd}
\]
Since $f$ is in $\llp(\jj)$, then there is a map $\ell$ such that $\iota=\ell\circ f$. Hence $f$ must be a monomorphism.
\end{proof}

\begin{lem}\label{lem: X in ch}
Maps in the set $\ii$ are the generating fibrations in $\chf$.
\end{lem}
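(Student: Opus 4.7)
My goal is to show that $\llp(\ii)$ coincides with the class of acyclic cofibrations in $\chf$. The inclusion $\supseteq$ is automatic from the lifting axiom, since each $D^n\to S^n$ is an epimorphism and hence a fibration. For the reverse inclusion, I take $f\colon X\to Y\in\llp(\ii)$ and will establish that $f$ is both a monomorphism and a quasi-isomorphism.

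For the monomorphism step, I observe that the short exact sequence $0\to S^n\to D^{n+1}\to S^{n+1}\to 0$ exhibits $S^n\to 0$ as the pullback of $D^{n+1}\to S^{n+1}\in\ii$ along $0\to S^{n+1}$. Hence $S^n\to 0\in\Post_\ii$, and the composition $D^n\to S^n\to 0$ then gives $\jj\subseteq\Post_\ii$. By Proposition~\ref{prop: closed post} we have $\llp(\ii)=\llp(\Post_\ii)\subseteq\llp(\jj)$, and Lemma~\ref{lem: Z in ch} identifies this class with monomorphisms.

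For the quasi-isomorphism, I form the cokernel $Q=Y/X$; since $\k$ is semisimple, the short exact sequence $0\to X\to Y\to Q\to 0$ splits as graded $\k$-modules, so $Y_\bullet=X_\bullet\oplus Q_\bullet$ with
\[
d_Y=\begin{pmatrix}d_X & h\\ 0 & d_Q\end{pmatrix}
\]
for some degree $-1$ component $h\colon Q\to X$. Assume for contradiction that $H_n(Q)\neq 0$ and pick a cycle $q\in Z_n(Q)$ with $[q]\neq 0$. Using Proposition~\ref{prop: split in fields} I decompose $Q_n\cong H_n(Q)\oplus B_n(Q)\oplus B_{n-1}(Q)$, choose a linear functional $\phi\colon H_n(Q)\to\k$ with $\phi([q])=1$, and extend by zero to $\phi_n\colon Q_n\to\k$. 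Since $d_Q$ lands in $B_n(Q)\subseteq\ker\phi_n$, the map $\phi_n$ assembles into a chain map $Q\to S^n(\k)$, and composing with $Y\twoheadrightarrow Q$ yields $\psi\colon Y\to S^n(\k)$ with $\psi\circ f=0$.

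Applying the LLP hypothesis to the commuting square with top arrow the zero map $X\to D^n(\k)$ and bottom arrow $\psi$ produces a lift $\ell\colon Y\to D^n(\k)$. Under the natural identification $\chf(Y,D^n(\k))\cong\Hom_\k(Y_{n-1},\k)$, the lift $\ell$ corresponds to some $g\colon Y_{n-1}\to\k$ with $\ell_n=g\circ d_Y$; commutativity of the square forces $g|_{X_{n-1}}=0$ and $g\circ d_Y=\psi_n$ on $Y_n$. Evaluating both sides on $(0,q)\in X_n\oplus Q_n=Y_n$, the right-hand side gives $\phi_n(q)=\phi([q])=1$, while the left-hand side gives $g(h(q),0)=0$ because $h(q)\in X_{n-1}$. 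This contradiction shows $H_\ast(Q)=0$, so $f$ is a quasi-isomorphism. The main subtlety I foresee is arranging $\psi$ through the non-canonical splitting of Proposition~\ref{prop: split in fields} so that it is simultaneously a chain map and detects the prescribed homology class; once this is set up, the LLP equation reads off the contradiction in a single degree.
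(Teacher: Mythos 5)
Your proof is correct. The monomorphism half is essentially the paper's: both reduce to Lemma \ref{lem: Z in ch} by showing $\llp(\ii)\subseteq\llp(\jj)$, you via the closure properties of $\Post_\ii$ (exactly items \textbf{(ii)}--\textbf{(iii)} of Remark \ref{rem: few results for the cocells} together with Proposition \ref{prop: closed post}), the paper via a direct lifting diagram; the content is the same. The quasi-isomorphism half genuinely differs. The paper first pushes $f$ out along $X\to 0$ to conclude that $0\to K$ lies in $\llp(\ii)$, then applies the global sphere/disk decomposition of Proposition \ref{prop: split in fields} to all of $K$ at once and tests against the \emph{thickened} class $\iio$ (so it needs the preceding lemma identifying $\llp(\ii)$ with $\llp(\iio)$) to force $H_n(K)=0$ for all $n$ simultaneously. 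You instead keep $f$ itself, use the semisimplicity of $\k$ only to split $Y_\bullet\cong X_\bullet\oplus Q_\bullet$ degreewise, and test one degree at a time against a single map $D^n(\k)\to S^n(\k)$ manufactured from a putative nonzero class in $H_n(Q)$; the contradiction then drops out of the representability of maps into $D^n$ and $S^n$. Your route buys independence from the thickening lemma and from the class $\iio$ (only $V=\k$ is ever used), at the cost of the explicit bookkeeping with the connecting component $h$. One small touch-up: since $\k$ is a finite product of fields rather than a field, you can only arrange $\phi([q])\neq 0$ in $\k$ rather than $\phi([q])=1$ (a component of $[q]$ may vanish in some factor of $\k$); the contradiction $0\neq\phi([q])=g(h(q),0)=0$ is unaffected.
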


\begin{proof}
Notice that $\llp(\ii)\subseteq \llp(\jj)$ as any lift $Y\rightarrow D^n$ in the following commutative diagram induces the dashed lift:
\[
\begin{tikzcd}[column sep=large]
X\ar{r} \ar{d} & D^n\ar{d}\\
Y\ar{r}\ar{ur}\ar[equals]{d} & S^{n}\ar{d}\\
Y \ar{r}\ar[dashed, crossing over]{uur} & 0.
\end{tikzcd}
\]
In particular, Lemma \ref{lem: Z in ch} shows that maps in $\llp(\ii)$ are monomorphisms. 
Let $f\colon X\rightarrow Y$ be the map in $\llp(\ii)$ and let us show it is a quasi-isomorphism. Since $f$ is a monomorphism, there is an induced short exact sequence in $\chf$:
\[
\begin{tikzcd}
0\ar{r} & X\ar{r}{f} & Y \ar{r} & K \ar{r} & 0,
\end{tikzcd}
\]
where $K=\coker(f)$.
It remains to show that $K$ is acyclic.
Notice first that $K$ is defined as the pushout:
\[
\begin{tikzcd}
X \ar{d}[swap]{f}\ar{r} & 0 \ar{d}\\
Y \ar{r} & K, \push
\end{tikzcd}
\]
and so, since $f$ is in $\llp(\ii)$, then $0\rightarrow K$ is in $\llp(\ii)$. 
Following Proposition \ref{prop: split in fields}, decompose $K$ as:
\[
K\cong \prod_{n\in \mathbb{Z}} S^n(V_n) \oplus D^n(W_n),
\]
where $V_n=H_n(K)$.
Then we obtain a map by projection:
\[
\begin{tikzcd}
\displaystyle \prod_{n\in \mathbb{Z}} S^n(V_n) \oplus D^n(W_n) \ar{r} & \displaystyle\prod_{n\in \mathbb{Z}} S^n(V_n),
\end{tikzcd}
\]
that factors through the non-trivial map:
\[
\begin{tikzcd}
\displaystyle\prod_{n\in \bz} D^n(V_n) \ar{r} & \displaystyle\prod_{n\in \bz} S^n(V_n)
\end{tikzcd}
\]
as $0\rightarrow K$ is in $\llp(\ii)=\llp(\iio)$. But this is only possible when $V_n=0$, hence $K$ must be acyclic. Thus $f$ is a quasi-isomorphism.
\end{proof}

\subsection{A Postnikov presentation}
In Definition \ref{def: GENERATING FIB} and Theorem \ref{thm: fib gen of ch}, we introduced sets of generating fibrations and acyclic fibrations for $\chf$. We now show they induce a Postnikov presentation on $\chf$. 

\begin{thm}\label{thm: cocell presentation of chains}
The pair $(\iio, \jj)$ is a Postnikov presentation of the model category of unbounded chain complex $\chf$.
\end{thm}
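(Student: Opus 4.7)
The strategy is to verify all four conditions of Definition \ref{def: cocellular presentation global def} for the pair $(\iio, \jj)$. Since $\iio$ consists of fibrations and $\jj$ of acyclic fibrations, and since (acyclic) fibrations in $\chf$ are closed under pullbacks, compositions, and limits of towers, Proposition \ref{prop: closed post} immediately gives $\widehat{\Post_{\iio}} \subseteq \fib(\chf)$ and $\widehat{\Post_{\jj}}$ contained in the class of acyclic fibrations. The reverse inclusions will follow from the factorizations (a), (b) via the retract argument, so the core of the proof is to construct these factorizations.

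For factorization (a) applied to $f:X\r Y$, I would apply Proposition \ref{prop: split in fields} to write $X\cong \bigoplus_n \big( S^n(H_n X)\oplus D^n(B_{n-1} X)\big)$. Since every $\k$-module is projective, one may choose complements $U_n, U'_n$ so that $H_n X\oplus U_n$ and $B_{n-1}X\oplus U'_n$ are free over $\k$; then the acyclic complex
\[
\tilde X:=\bigoplus_n \Big( D^{n+1}(H_n X\oplus U_n)\oplus D^n(B_{n-1}X\oplus U'_n)\Big)
\]
is a direct sum of disks $D^k$ with $\k$-coefficients. The inclusions $S^n\hookrightarrow D^{n+1}$ give a chain embedding $\iota: X\hookrightarrow \tilde X$. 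Setting $V:=Y\oplus \tilde X$, the map $X\r V$, $x\mapsto (f(x), \iota(x))$, is a monomorphism, and the projection $V\r Y$ is a $\jj$-Postnikov tower obtained by iteratively pulling back copies of $D^k\r 0$, one summand at a time.

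Factorization (b) is the technical heart. I would take $W$ to be the mapping path-space, $W_n := X_n \oplus Y_n \oplus Y_{n+1}$ with differential $d(x, b, c) = (dx, db, dc + (-1)^{n+1}(b - f(x)))$. A direct check shows that $X \r W$, $x\mapsto (x, f(x), 0)$, is an acyclic cofibration (the quotient $W/X$ is the cone on $\id_Y$, hence acyclic), while $W \r Y$, $(x, b, c) \mapsto b$, is an epimorphism with kernel isomorphic to the shifted mapping cone $\cone(f)[1]$. Decomposing this kernel by Proposition \ref{prop: split in fields} as $\bigoplus_n (S^n(V_n) \oplus D^n(U_n))$, I would build $W \r Y$ as a transfinite composition of pullbacks of maps in $\iio$, indexed by the summands. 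For each sphere summand $S^n(V_n)$, the corresponding extension is classified by $\mathsf{Ext}^1(-, S^n(V_n))$, which over the semisimple ring $\k$ is represented by an honest chain map $Z\r S^{n+1}(V_n)$, realizing the extension as the pullback of $D^{n+1}(V_n) \r S^{n+1}(V_n) \in \iio$. For each disk summand $D^m(U_m)$, chain contractibility forces the extension to split at the chain level, so one is reduced to producing the projection $Z \oplus D^m(U_m) \r Z$ by two successive pullbacks of $\iio$-maps: first pull back $D^{m+1}(U_m) \r S^{m+1}(U_m)$ along zero to introduce a summand $S^m(U_m)$, then pull back $D^m(U_m) \r S^m(U_m)$ along the projection $Z\oplus S^m(U_m)\r S^m(U_m)$, which by a direct computation yields $Z \oplus D^m(U_m)$ with the desired projection to $Z$. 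Convergence of the tower is automatic since only finitely many summands contribute in any fixed degree.

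The proof will conclude by the retract argument: given any fibration $p: W \r Y$, factor by (b) as $W \stackrel{\sim}\hookrightarrow W' \r Y$, and use the left lifting property of the acyclic cofibration $W\hookrightarrow W'$ against $p$ to obtain a retraction $W'\r W$ over $Y$, exhibiting $p$ as a retract of the $\iio$-Postnikov tower $W'\r Y$; the analogous argument for acyclic fibrations uses (a). I expect the main obstacle will be the disk-summand case of step (b): unlike sphere summands, which arise naturally from pullbacks of $\iio$-maps via the $\mathsf{Ext}^1$-classification, disk summands do not come from nontrivial extensions, so realizing the \emph{addition of an acyclic direct summand} as a genuine $\iio$-Postnikov tower (rather than merely a retract) demands the explicit two-pullback construction above.
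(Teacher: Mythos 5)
Your overall strategy is sound, and your treatment of factorization (b) is correct, but there is a genuine (though easily repaired) slip in factorization (a). The set $\jj=\{D^n\to 0\}_{n\in\bz}$ consists of disks on a single copy of $\k$, and since their codomains are the terminal object, every stage of a $\jj$-Postnikov tower is forced to be of the form $Y_{\beta+1}\cong Y_\beta\oplus D^{k}$; passing to (transfinite) limits therefore produces \emph{products} of disks, never infinite \emph{direct sums}. Your $\tilde X$ is a direct sum of disks on free modules, and over a field an infinite direct sum $\bigoplus_\lambda\k$ is not isomorphic to any product $\prod_\alpha\k$ (countable versus uncountable dimension), so your $V=Y\oplus\tilde X\to Y$ lies in $\widehat{\Post_\jj}$ but not in $\Post_\jj$, whereas Definition \ref{def: cocellular presentation global def}(a) demands an honest $\jj$-Postnikov tower. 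The repair is one line: compose your embedding with $\bigoplus_\lambda D^k\hookrightarrow\prod_\lambda D^k$ and take $V:=Y\oplus\prod(\cdots)$, which is exactly what the paper does in Lemma \ref{EASY LEMMA}.

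For factorization (b) you take a genuinely different route from the paper, and it works. The paper factors $f$ through $X\oplus Y$ and then repairs the homology of the split monomorphism $X\to X\oplus Y$ one degree at a time, using the pullback construction $F_n$ of Lemma \ref{lem: technical factorization in ch} to kill $\coker H_n(j)$; the limit $\widetilde W$ of that tower is a new complex receiving an acyclic cofibration from $X$. You instead start from the mapping path space $W$, so the acyclic cofibration $X\to W$ is immediate, and all the work goes into exhibiting the fibration $W\to Y$ as a $\iio$-Postnikov tower by filtering its kernel into sphere and disk summands. The two key points are both correct: over semisimple $\k$ every extension $0\to S^n(V)\to E\to Z\to 0$ is degreewise split, and the resulting twisting datum $Z_{n+1}\to V$ (killing boundaries, by $d^2=0$) is precisely a chain map $Z\to S^{n+1}(V)$ exhibiting $E$ as the pullback of $D^{n+1}(V)\to S^{n+1}(V)$; and adding a split disk summand is a composite of two such pullbacks, so no retracts are needed. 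Degreewise stabilization then gives convergence of the tower to $W$ (here, unlike in (a), you may peel off each $S^n(V_n)$ and $D^n(U_n)$ in a single step because $\iio$ is a class allowing arbitrary coefficient modules). Your argument in fact proves the stronger statement that every epimorphism in $\chf$ lies in $\Post_{\iio}$ on the nose. The trade-off is that the paper's homology-correction scheme is the one that transports to comodules in Section \ref{Sec: example on comodules}, where the kernel of a comodule fibration need not decompose into cofree spheres and disks, whereas your kernel-filtration argument is specific to $\chf$.
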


We shall prove Theorem \ref{thm: cocell presentation of chains} with Lemmas \ref{EASY LEMMA} and \ref{lem: postX in ch} below.

\begin{rem}\label{rem: homotopy limits in ch}
Our arguments in Theorem \ref{thm: cocell presentation of chains} can be generalized to the category $\chf^\D$, the category of functors $\D\rightarrow \chf$, where $\D$ is a small category. Indeed, endow $\chf^\D$ with its injective model structure with levelwise weak equivalences and cofibrations (see for instance \cite[3.4.1]{left2}). Using the same notation as \cite[4.10]{left1}, the pair $(\iio \pitchfork \D, \jj\pitchfork \D)$ is a Postnikov presentation for $\chf^\D$. This provides inductive arguments to compute homotopy limits in $\chf$.
\end{rem}

\begin{rem}\label{rem: few results for the cocells}
We were not able to restrict ourselves to the set $\ii$ and had to consider the class $\iio$. We note here a few basic results. 
\begin{enumerate}[label=\upshape\textbf{(\roman*)}]
\item As $\ii\subseteq \iio$, we get $\Post_\ii\subseteq \Post_\iio$.

\item\label{item: rem for cocell2} The maps $S^n\rightarrow 0$ are in $\Post_\ii$ as they are obtained as pullbacks:
\[
\begin{tikzcd}
S^n\ar{r} \ar{d}\pull & D^{n+1}\ar{d}\\
0 \ar{r} & S^{n+1}.
\end{tikzcd}
\]
Similarly, for any $\k$-module $V$, the maps $S^n(V) \rightarrow 0$ are in $\Post_\iio$.

\item Since $D^n\rightarrow 0$ is the composite $D^n\longrightarrow S^n\longrightarrow 0$,
we see that $\jj\subseteq \Post_\ii$, and thus $\Post_\jj\subseteq \Post_\ii\subseteq \Post_\iio$ by Proposition \ref{prop: closed post}.

\item Although $\iio\nsubseteq \Post_\ii$, we have $\iio\subseteq \widehat{\Post_\ii}$ (see Notation \ref{nota: retracts}). Indeed, for any $\k$-module $V$, any map $D^n(V)\rightarrow S^n(V)$ is the retract of a map $D^n(F)\rightarrow S^n(F)$ where $F$ is a free $\k$-module. Then, for $\lambda$ a basis of $F$, we have the retract in $\chf$:
\[
\begin{tikzcd}
\ds \bigoplus_\lambda D^n\ar{r}\ar{d} & \ds \prod_{\lambda} D^n\ar{r} \ar{d} & \ds \bigoplus_\lambda D^n\ar{d}\\
\ds \bigoplus_\lambda S^n \ar{r} & \ds \prod_\lambda S^n\ar{r} & \ds \bigoplus_\lambda S^n,
\end{tikzcd}
\]
induced by the split short exact sequence in $\k$-modules:
\[
\begin{tikzcd}
0\ar{r} & \ds\bigoplus_\lambda  \k \ar[hook]{r}{\iota} & \ds\prod_\lambda \k \ar[bend right, dashed]{l} \ar{r} & \coker(\iota) \ar{r} & 0,
\end{tikzcd}
\]
where $\iota:\ds\bigoplus_\lambda  \k \hookrightarrow\ds\prod_\lambda \k$ is the natural monomorphism.
\end{enumerate}
\end{rem}

\begin{lem}\label{lem: all fibrants are cocells}
Let $X$ be any chain complex over $\k$. Then the trivial map $X\rightarrow 0$ is a $\iio$-Postnikov tower. If $X$ is acyclic, the trivial map is a $\jj_\oplus$-Postnikov tower.
\end{lem}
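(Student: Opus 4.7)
The plan is to invoke the explicit decomposition from Proposition \ref{prop: split in fields} together with the closure properties of $\Post_\iio$ and $\Post_{\jj_\oplus}$ afforded by Proposition \ref{prop: closed post}, in order to present $X \to 0$ as a transfinite tower of pullbacks.

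By Proposition \ref{prop: split in fields} we have a (non-canonical) isomorphism
$$X \cong \prod_{n \in \bz} \bigl[ S^n(V_n) \oplus D^n(W_n) \bigr],$$
with $V_n = H_n(X)$ and $W_n = B_{n-1}(X)$. For the first claim, Remark \ref{rem: few results for the cocells}\textbf{(ii)} already records that each map $S^n(V) \to 0$ lies in $\Post_\iio$, since it arises as the pullback of $D^{n+1}(V) \to S^{n+1}(V) \in \iio$ along $0 \to S^{n+1}(V)$. Each map $D^n(V) \to 0$ factors as $D^n(V) \to S^n(V) \to 0$, with the first arrow in $\iio$ and the second in $\Post_\iio$, so it too lies in $\Post_\iio$ by closure under composition. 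Enumerating $\bz$ with order-type $\omega$ (for instance as $0,1,-1,2,-2,\dots$), I would construct an $\omega\op$-indexed tower starting from $Y_0=0$ in which each successor step pulls back the next factor $D^n(W_n) \to 0$ or $S^n(V_n) \to 0$ along $Y_\beta \to 0$; the limit is $\prod_n \bigl[ S^n(V_n)\oplus D^n(W_n) \bigr] \cong X$. By closure of $\Post_\iio$ under pullbacks, composition, and ordinal-indexed limits (Proposition \ref{prop: closed post}), the composite $X \to 0$ then belongs to $\Post_\iio$.

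For the second claim, if $X$ is acyclic then $V_n=H_n(X)=0$ for every $n$, and the decomposition collapses to $X \cong \prod_n D^n(W_n)$. Each $D^n(W_n) \to 0$ lies in $\jj_\oplus$ by definition, so the same tower construction uses only direct pullbacks of maps in $\jj_\oplus$, realizing $X \to 0$ as a genuine $\jj_\oplus$-Postnikov tower.

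The main point to be careful about is that the definition of a $\iio$-Postnikov tower asks each successor step to be the pullback of a map in $\iio$ itself, whereas above the $D^n(V)\to 0$ transitions only lie in $\Post_\iio$. One may either appeal directly to Proposition \ref{prop: closed post}, or else subdivide each $D^n(V)$-adjoining step into two genuine pullbacks of maps in $\iio$---first of $D^{n+1}(V) \to S^{n+1}(V)$ along $0 \to S^{n+1}(V)$ to adjoin an $S^n(V)$-summand, then of $D^n(V) \to S^n(V)$ along the resulting projection $S^n(V) \oplus Y_\beta \to S^n(V)$---so that every transition is a pullback of a single element of $\iio$.
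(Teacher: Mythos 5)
Your proof is correct and follows exactly the route the paper takes: its proof of this lemma is a one-line citation of Proposition \ref{prop: split in fields}, item \ref{item: rem for cocell2} of Remark \ref{rem: few results for the cocells}, and Proposition \ref{prop: closed post}, which are precisely the three ingredients you assemble. Your extra care in subdividing the $D^n(V)$-adjoining step into two genuine pullbacks of maps in $\iio$ is a nice (if optional, given Proposition \ref{prop: closed post}) elaboration.
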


\begin{proof}
Follows from Proposition \ref{prop: split in fields}, \ref{item: rem for cocell2} of Remark \ref{rem: few results for the cocells}, and Proposition \ref{prop: closed post}.
\end{proof}

\begin{lem}\label{EASY LEMMA}
Every acyclic fibration in $\chf$ is a retract of a $\jj$-Postnikov tower. Every map in $\chf$ factors as a cofibration followed by a $\jj$-Postnikov tower.
\end{lem}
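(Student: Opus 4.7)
My approach is to prove the factorization statement first, and then to derive the retract statement from it by the lifting axiom of the model structure on $\chf$.

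For the factorization, given $f: X \to Y$ in $\chf$, the plan is to construct an acyclic chain complex $E$ together with a monomorphism $\iota: X \hookrightarrow E$ such that $E \to 0$ lies in $\Post_\jj$. Granted this, $f$ factors as
\[
\begin{tikzcd}
X \ar{r}{(f, \iota)} & Y \times E \ar{r}{\pi_Y} & Y,
\end{tikzcd}
\]
where $(f, \iota)$ is injective in each degree (since $\iota$ is), hence a cofibration, and $\pi_Y$ is the pullback of $E \to 0$ along $Y \to 0$, so that by Proposition \ref{prop: closed post} it lies again in $\Post_\jj$.

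To build $E$: for each pair $(n, \xi)$ with $n \in \bz$ and $\xi \in \Hom_\k(X_n, \k)$, the form $\xi$ determines a unique chain map $\varphi_{n, \xi}: X \to D^{n+1}$ whose degree-$n$ component equals $\xi$ (its degree-$(n+1)$ component is then forced to be $\xi \circ d$). Set
\[
E := \prod_{n \in \bz,\ \xi \in \Hom_\k(X_n, \k)} D^{n+1},
\]
and take $\iota: X \to E$ to be the induced map into the product. The essential input is that $\k$ is a finite product of fields: every $\k$-module is then semisimple and its $\k$-linear dual separates points, so the degree-$n$ component of $\iota$, namely $x \mapsto (\xi(x))_\xi$, is injective, and hence $\iota$ is a monomorphism. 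Well-ordering the index set and iteratively pulling back the generators $D^{n+1} \to 0$ (taking limits at limit ordinals) exhibits $E \to 0$ as a $\jj$-Postnikov tower directly from the definition.

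For the retract statement, apply the factorization just constructed to an acyclic fibration $p: X \to Y$, obtaining $p = q \circ i$ with $i: X \hookrightarrow W$ a cofibration and $q: W \to Y$ a $\jj$-Postnikov tower. The lifting axiom applied to
\[
\begin{tikzcd}
X \ar{r}{\id_X} \ar[hook]{d}[swap]{i} & X \ar[two heads]{d}{p} \\
W \ar{r}{q} & Y
\end{tikzcd}
\]
produces $r: W \to X$ with $r \circ i = \id_X$ and $p \circ r = q$, realizing $p$ as a retract of $q$. The one delicate step is the injectivity of $\iota$ when the $\k$-modules $X_n$ are not free, which is exactly where the hypothesis that $\k$ be semisimple is used.
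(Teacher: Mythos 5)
Your proof is correct. The overall strategy is the same as the paper's (non-functorial) argument: embed $X$ by a monomorphism into an acyclic complex $E$ that is a product of disks, factor $f$ through $Y\oplus E$ with the projection a pullback of $E\to 0$, and then deduce the retract statement by the retract argument applied to an acyclic fibration. Where you genuinely diverge is in how the embedding $X\hookrightarrow E$ is produced. The paper first splits $X$ as a product of spheres and disks via Proposition \ref{prop: split in fields}, includes each $S^n(V_n)$ into $D^{n+1}(V_n)$, and then embeds the coefficient modules into free modules so as to land in a product of copies of $D^n$ (elements of the set $\jj$ rather than the class $\jj_\oplus$). You instead use the corepresentability $\Hom_{\chf}(X,D^{n+1})\cong \Hom_\k(X_n,\k)$ together with the fact that over a semisimple ring linear functionals separate points; your $E$ is essentially the cofree coresolution one would get from the first stage of the cosmall object argument (the paper's other, functorial proof, which applies because the common codomain $0$ is cosmall). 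Both routes use the hypothesis on $\k$ at exactly one point -- the paper through the splitting theorem, you through separation of points -- and your version has the mild advantage of bypassing the structural decomposition of $X$ entirely, at the cost of a larger index set. All the individual steps check out: the unique extension of $\xi$ to a chain map with degree-$(n+1)$ component $\xi\circ d$, the injectivity of the degree-$n$ component of $\iota$, the exhibition of the product as an ordinal-indexed tower of pullbacks along $\jj$, the stability of $\Post_\jj$ under pullback from Proposition \ref{prop: closed post}, and the use of \cite[1.1.9]{hovey} for the retract statement.
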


\begin{proof}
We provide two proofs by presenting two different factorizations. The first one has the advantage to be functorial but harder to compute. The second is not functorial but is easier to compute.
Let us do the first possible factorization. By Theorem \ref{thm: fib gen of ch}, the set $\jj$ of maps in $\chf$ is the set of generating acyclic fibrations. Their codomain is the terminal object in $\chf$ and is thus cosmall. We can then apply the cosmall object argument (Proposition \ref{prop: cosmall}) to obtain the desired factorization.

For the second possible factorization, start with any morphism $f\colon X \rightarrow Y$ in $\chf$. 
%%By Proposition \ref{prop: split in fields}, any chain complex can be mapped into an acyclic chain complex, via the maps $S^n(V)\hookrightarrow D^{n+1}(V)$. Choose a monomorphism $\iota:X\hookrightarrow Z$ in $\chf$, where $Z$ is an acyclic chain complex.
Choose a decomposition of $X$ by using Proposition \ref{prop: split in fields}:
$X \cong  \prod_{n\in \bz} S^n(V_n) \oplus D^n(W_n)$,
for some collection of $\k$-modules $V_n$ and $W_n$. 
The inclusions $S^n(V_n)\hookrightarrow D^{n+1}(V_n)$ define then a monomorphism in $\chf$:
\[
\begin{tikzcd}
X\cong \ds \prod_{n\in \bz} S^n(V_n) \oplus D^n(W_n)\ar[hook]{r} & \ds \prod_{n\in \bz} D^{n+1}(V_n) \oplus D^n(W_n).
\end{tikzcd}
\]
Since $V_n$ and $W_n$ are projective $\k$-modules, they can be embedded into free $\k$-modules, say $F_n$ and $G_n$, with basis $\lambda_n$ and $\gamma_n$.
Then we obtain the following monomorphisms in $\chf$:
\[
\begin{tikzcd}
\ds \prod_{n\in \bz} D^{n+1}(V_n) \oplus D^n(W_n) \ar[hook]{r} & \ds\prod_{n\in \bz} \bigoplus_{\lambda_n} D^{n+1} \oplus \bigoplus_{\gamma_n} D^n.
\end{tikzcd}
\]
Thus, we get the monomorphism in $\chf$:
$\begin{tikzcd}
X\ar[hook]{r} & \ds \prod_{n\in \bz} \prod_{\lambda_n, \gamma_n} D^{n+1} \oplus D^n.
\end{tikzcd}$
Denote  by$Z$ the acyclic chain complex  $\prod_{n\in \bz} \prod_{\lambda_n, \gamma_n} D^{n+1} \oplus D^n$. 
We obtain the desired second factorization in $\chf$: $\begin{tikzcd}
X\ar[hook]{r}{\iota \oplus f} & Z\oplus Y \ar{r}{q} & Y,
\end{tikzcd} $
%%\[
%%\begin{tikzcd}
%%X \ar[hook, bend left]{drr} \ar[hook]{dr} \ar[bend right]{ddr}[swap]{f}& & \\
%%& Z \oplus Y(\cdot)\pull \ar{r} \ar{d}{q} &  Z \ar{d} \\
%%& Y \ar{r} & 0,
%%\end{tikzcd}
%%\]
where the map $q$ is the projection onto $Y$, which is indeed a $\jj $-Postnikov tower by Proposition \ref{prop: closed post}.
\end{proof}

\begin{lem}\label{lem: postX in ch}
Every fibration in $\chf$ is a retract of a $\iio$-Postnikov tower and every map in $\chf$ factors as an acyclic cofibration followed by a $\iio$-Postnikov tower.
\end{lem}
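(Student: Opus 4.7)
The plan is to establish a single, stronger statement from which both assertions of the lemma follow at once: every surjection $p: E \to Y$ in $\chf$ already lies in $\Post_{\iio}$. From this, the first claim is immediate (fibrations in $\chf$ are precisely the surjections, and each one is trivially a retract of itself), and the second follows by applying the factorization axiom of the model structure on $\chf$ to produce $f = p \circ j$ with $j$ an acyclic cofibration and $p$ a surjection.

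To prove the key statement, let $p: E \to Y$ be a surjection with kernel $K$, and apply Proposition \ref{prop: split in fields} to decompose $K \cong K_S \oplus K_D$, where $K_S = \prod_{n} S^n(V_n)$ and $K_D = \prod_n D^n(W_n)$ is the acyclic summand. Because $\k$ is a finite product of fields, the explicit contracting homotopy of each disk shows that every chain map into $D^m(W)$ is null-homotopic, so any extension by $K_D$ splits; applied to $0 \to K_D \to E \to E/K_D \to 0$ this yields $E \cong K_D \oplus (E/K_D)$, and $p$ factors as
\[
E \;\cong\; K_D \oplus (E/K_D) \xrightarrow{\mathrm{pr}} E/K_D \xrightarrow{\bar p} Y.
\]
The first arrow is the pullback of $K_D \to 0$ (a $\iio$-Postnikov tower by Lemma \ref{lem: all fibrants are cocells}) along the unique map $E/K_D \to 0$, so it lies in $\Post_{\iio}$ by Proposition \ref{prop: closed post}.

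It remains to handle $\bar p : E/K_D \to Y$, whose kernel is the purely spherical $K_S = \prod_n S^n(V_n)$. Since every object of $\chf$ is cofibrant, the extension class is represented by an honest chain map $\kappa : Y \to K_S[1] = \prod_n S^{n+1}(V_n)$, and a direct computation identifies $E/K_D$ with the pullback
\[
\begin{tikzcd}
E/K_D \ar{r} \ar{d}[swap]{\bar p} \pull & \prod_n D^{n+1}(V_n) \ar{d} \\
Y \ar{r}{\kappa} & \prod_n S^{n+1}(V_n),
\end{tikzcd}
\]
whose right vertical is the product of the canonical $\iio$-maps $D^{n+1}(V_n) \to S^{n+1}(V_n)$. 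Realizing this countable product as an $\omega$-indexed tower of binary pullbacks places the right vertical in $\Post_{\iio}$, so $\bar p$ is a pullback of a $\iio$-Postnikov tower and hence is in $\Post_{\iio}$. Composing with $\mathrm{pr}$ gives $p \in \Post_{\iio}$. The main technical burden is verifying the pullback identification above by matching the twisted differential on $E/K_D$ with the cocycle representing $\kappa$; this concrete calculation is precisely what lets the argument bypass the cosmallness hypotheses that fail for the codomains in $\iio$, as noted in Example \ref{ex: cosmall are rare}.
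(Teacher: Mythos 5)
Your proposal is correct, but it takes a genuinely different route from the paper. The paper never proves that fibrations \emph{are} $\iio$-Postnikov towers; instead it constructs the factorization by hand: it first factors $f$ through $X\oplus Y$, then runs two infinite towers of degree-by-degree homology corrections (Lemma \ref{lem: technical factorization in ch}, applied first in non-negative and then in negative degrees) to upgrade the cofibration to an acyclic cofibration while keeping the other leg in $\Post_\iio$, and finally deduces the retract statement by the retract argument. You instead prove the sharper claim that every epimorphism of $\chf$ already lies in $\Post_\iio$ (so the retract closure is superfluous), and outsource the factorization to the factorization axiom of the already-established model structure on $\chf$ — which is legitimate, since that model structure is taken as given in the paper's Notation. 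Your key step checks out: over semisimple $\k$ the contractible summand $K_D$ of the kernel is an injective object of $\chf$ (a product of disks on injective modules — this, rather than ``maps into disks are null-homotopic,'' is the cleanest reason the extension splits), so it peels off as a pullback of $K_D\to 0$; and for the remaining spherical extension, a degreewise splitting (again semisimplicity, not cofibrancy, is the operative point) exhibits the differential of $E/K_D$ as $(y,v)\mapsto(d_Yy,\kappa(y))$ with $\kappa$ vanishing on boundaries by $d^2=0$, which is exactly a chain map $Y\to\prod_nS^{n+1}(V_n)$ and identifies $\bar p$ as the asserted pullback; the countable product of generating fibrations is indeed an $\omega$-indexed $\iio$-Postnikov tower. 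What each approach buys: yours is shorter, gives a stronger conclusion, and isolates precisely where semisimplicity enters; the paper's inductive construction is the one that survives the passage to comodules (Lemma \ref{lem: factorization hard for comod}), where the kernel cannot be split off a cofree resolution so cheaply, and it produces the explicit stabilizing Postnikov tower that Corollary \ref{SUPER IMPORTANT COR} and the Dold-Kan arguments of Section \ref{section: dold-kan correspondence} rely on.
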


We prove the above lemma at the end of this section.

\begin{rem}
Unlike Lemma \ref{EASY LEMMA}, we cannot use the cosmall object argument in order to prove Lemma \ref{lem: postX in ch}.
Indeed, as noted in \cite{sore0}, the codomains $S^n$ of maps in the set $\ii$ are not cosmall relative to $\Post_\ii$. Indeed, let $Y_k=S^n$ for all $k\geq 0$ and $Y_{k+1}\rightarrow Y_k$ be the zero maps. Let $Y=\llim{k\geq 0}Y_k$ be the limit in $\chf$. The set map:
\[
\collim{k\geq 0}\left(\Hom_\chf(Y_k, S^n)\right) \longrightarrow \Hom_\chf(Y, S^n)
\]
is not a bijection. Indeed, the map is equivalent to the map:
\[
\bigoplus_{k\geq 0} \k \longrightarrow \left(\prod_{k\geq 0} \k\right)^*,
\]
which is never a bijection. 
A similar argument can be applied to show that the codomains $S^n(V)$ of the maps in the class $\iio$ are not cosmall relative to $\Post_\iio$, for any $\k$-module $V$.
\end{rem}

\begin{lem}\label{lem: pullback homology of X}
Let $X$ be a chain complex in $\chf$. Let $V$ be a $\k$-module. Let $n\in\bz$.
Given a surjective linear map $f_n\colon X_n\rightarrow V$ non-trivial only on the $n$th-homology in a splitting $X_n\cong H_n(X)\oplus B_n(X)\oplus B_{n-1}(X)$, there is a  map of chain complexes $f\colon X\rightarrow S^n(V)$, and the pullback chain complex $P$ in the following diagram:
\[
\begin{tikzcd}
P\pull \ar{r} \ar{d} & D^n(V)\ar{d}\\
X\ar{r} & S^n(V),
\end{tikzcd}
\]
has homology:
\[
H_i(P)\cong\left\lbrace \begin{tabular}{ll}
$\ker\left(H_n(f)\right)$ & $i=n$,\\
$H_i(X)$ & $i\neq n$,
\end{tabular} \right.
\]
and we have $P_i=X_i$ for $i\neq n-1$ and $P_{n-1}=X_{n-1}\oplus V$.
\end{lem}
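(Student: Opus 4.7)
The plan is to prove this via the long exact sequence in homology associated to a short exact sequence of chain complexes, rather than by a direct computation with the pullback formula.

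First I would verify that the hypothesis on $f_n$ produces a genuine chain map $f:X\r S^n(V)$. The phrase \emph{non-trivial only on $n$-cycles} means that, after a splitting as in Proposition \ref{prop: split in fields}, the map $f_n$ vanishes on the summand $B_{n-1}(X)$ of $X_n$ and on $B_n(X)\subseteq Z_n(X)$, while its restriction to $Z_n(X)$ surjects onto $V$. The vanishing on $B_n(X)$ is exactly $f_n\circ d_{n+1}^X=0$, which, together with $f_i=0$ for $i\neq n$, makes $f$ a chain map. Consequently the induced map $H_n(f)\colon H_n(X)\r V$ is surjective.

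Next I would compute $P$ level-wise as the pullback of $\k$-modules, since limits in $\chf$ are formed level-wise. For $i\neq n,n-1$ both $S^n(V)_i$ and $D^n(V)_i$ vanish and so $P_i=X_i$. In degree $n$ the map $D^n(V)_n=V\r S^n(V)_n=V$ is the identity, so $P_n\cong X_n$ via the first projection. In degree $n-1$, since $S^n(V)_{n-1}=0$ and $D^n(V)_{n-1}=V$, the pullback collapses to the product, giving $P_{n-1}=X_{n-1}\oplus V$. This establishes the $P_i$ formula stated in the lemma.

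The key step is the homology calculation, which I would carry out by exploiting the short exact sequence noted in Definition \ref{defi: D^n and S^n in ch}:
\[
\begin{tikzcd}
0\ar{r}& S^{n-1}(V)\ar{r}& D^n(V)\ar{r}& S^n(V)\ar{r}& 0.
\end{tikzcd}
\]
Since pullbacks preserve kernels, pulling back along $f$ yields a short exact sequence of chain complexes
\[
\begin{tikzcd}
0\ar{r}& S^{n-1}(V)\ar{r}& P\ar{r}& X\ar{r}& 0,
\end{tikzcd}
\]
whose associated long exact sequence in homology reads
\[
\cdots\r H_i(S^{n-1}(V))\r H_i(P)\r H_i(X)\stackrel{\delta}\r H_{i-1}(S^{n-1}(V))\r\cdots.
\]
Since $H_i(S^{n-1}(V))$ is $V$ for $i=n-1$ and zero otherwise, this immediately gives $H_i(P)\cong H_i(X)$ for all $i\neq n,n-1$, and isolates the only interesting piece
\[
0\r H_n(P)\r H_n(X)\stackrel{\delta}\r V\r H_{n-1}(P)\r H_{n-1}(X)\r 0.
\]

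Finally, the obstacle that demands care is the identification of the connecting homomorphism $\delta$ with $H_n(f)$. I would verify this by unpacking the snake lemma through a choice of set-theoretic section of $D^n(V)_n\twoheadrightarrow S^n(V)_n$ (namely the identity on $V$), so that a cycle $x\in Z_n(X)$ lifts to $(x,f_n(x))\in P_n\subseteq X_n\oplus V$ and its differential in $P$ equals $(0,f_n(x))\in S^{n-1}(V)_{n-1}$, identifying $\delta[x]=[f_n(x)]=H_n(f)([x])$. Surjectivity of $H_n(f)$ (from Step 1) then forces $H_{n-1}(P)\cong H_{n-1}(X)$ and $H_n(P)\cong\ker(H_n(f))$, completing the proof.
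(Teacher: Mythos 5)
Your proof is correct, but it takes a genuinely different route from the paper's. The paper computes $P$ levelwise, writes out the differentials $P_n\to P_{n-1}\to P_{n-2}$ explicitly, and then invokes the splitting $X_n\cong H_n(X)\oplus B_{n-1}(X)\oplus B_n(X)$ of Proposition \ref{prop: split in fields} to identify the image of $d\colon P_n\to P_{n-1}$ as $B_{n-1}(X)\oplus V$ and the $n$-cycles of $P$ as $\{x\in Z_n(X): f_n(x)=0\}$; the homology formulas then drop out by direct quotient computations. You instead observe that pulling back the short exact sequence $0\to S^{n-1}(V)\to D^n(V)\to S^n(V)\to 0$ of Definition \ref{defi: D^n and S^n in ch} along $f$ yields a short exact sequence $0\to S^{n-1}(V)\to P\to X\to 0$ (using that kernels and epimorphisms are stable under pullback in $\Mod_\k$), and you read everything off the long exact sequence once the connecting map is identified with $H_n(f)$ — your snake-lemma computation of $\delta[x]=[f_n(x)]$ is the one point that needed checking, and it is done correctly. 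Your argument buys two things: it packages the degree-$(n-1)$ and degree-$n$ computations into a single exactness statement, and, more substantively, the homology calculation no longer leans on the semisimplicity of $\k$ (Proposition \ref{prop: split in fields} enters your argument only in glossing the hypothesis on $f_n$, not in the computation), so it would go through over an arbitrary commutative ring. The paper's direct computation, on the other hand, exhibits the cycles and boundaries of $P$ concretely, which is the form in which the lemma gets reused in Lemma \ref{lem: technical factorization in ch}. Both proofs require, and you correctly flag, that ``non-trivial only on $n$-cycles'' must be read as including the vanishing of $f_n$ on $B_n(X)$, since that is what makes $f$ a chain map.
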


\begin{proof}
By construction, since pullbacks in $\chf$ are taken levelwise, for $i\neq n, n-1$, we have the pullbacks of $\k$-modules:
\[
\begin{tikzcd}
P_{n-1}\pull\ar{r} \ar{d} & V\ar{d}  & P_n\pull\ar{r}\ar{d} & V\ar[equals]{d} & P_i\pull\ar{r}\ar{d} & 0\,\ar[equals]{d}\\
X_{n-1} \ar{r} & 0, & X_n\ar{r}{f_n} & V, & X_i \ar{r} & 0.
\end{tikzcd}
\]
Thus $P_{n-1}\cong X_{n-1}\oplus V$ and $P_{i}=X_i$ for any $i\neq {n-1}$. The differential $P_n\rightarrow P_{n-1}$ is the linear map $
\begin{tikzcd}
X_n\ar{r}{d_n\oplus f} & X_{n-1}\oplus V,
\end{tikzcd}$
and the differential $P_{n-1}\rightarrow P_{n-2}$ is the linear map:
\[
\begin{tikzcd}
X_{n-1}\oplus V \ar{r} & X_{n-1}\ar{r}{d_{n-1}} & X_{n-2},
\end{tikzcd}
\]
where the unlabeled map is the natural projection. All the differentials $P_i\rightarrow P_{i-1}$ for $i\neq n, n-1$ are the differentials $X_i\rightarrow X_{i-1}$ of the chain complex $X$. Clearly, we get $H_i(P)=H_i(X)$ for $i\neq n, n-1$. For $i=n-1$, by Proposition \ref{prop: split in fields}, we can choose a decomposition:
\[
X_n\cong H_n(X)\oplus B_{n-1}(X)\oplus B_n(X).
\]
The differential $d_n\colon X_n\rightarrow X_{n-1}$ sends the factor $B_{n-1}(X)$ in $X_n$ to itself, and the factor $H_n(X)\oplus B_n(X)$ to zero. By definition, the map $f_n:X_n\rightarrow V$ sends the factor $H_n(X)$ in $X_n$ to the image of $f_n$, which is $V$ since $f_n$ is surjective, and the factor $B_{n-1}(X)\oplus B_n(X)$ to zero.
Thus the image of the differential $P_n\longrightarrow P_{n-1}$, is precisely $B_{n-1}(X)\oplus V$.
%%Hence the differential $d_n\oplus f: P_n=X_n\rightarrow X_{n-1}\oplus V=P_{n-1}$
Therefore, we obtain:
\begin{eqnarray*}
H_{n-1}(P) & = & \frac{\ker(P_{n-1}\rightarrow P_{n-2})}{\im(P_n\rightarrow P_{n-1})}\\
& \cong & \frac{Z_{n-1}(X)\oplus V}{B_{n-1}(X)\oplus V}\\
& \cong & \frac{Z_{n-1}(X)}{B_{n-1}(X)}\\
& = & H_{n-1}(X).
\end{eqnarray*}
For $i=n$, notice that the $n$-boundaries of $P$ are precisely the $n$-boundaries of $X$, the $n$-cycles of $P$ are the $n$-cycles $x$ in $X$ such that $f_n(x)=0$. 
Since $f_n\colon X_n\rightarrow V$ is entirely defined on the copy $H_n(X)$ in $X_n$, 
we get 
% from the commutative diagram:
% \[
% \begin{tikzcd}[row sep=large]
% Z_n(X)\ar[hook]{r}\ar{d} & X_n\ar{r}{f_n} & V\\
% H_n(X),\ar{urr}[swap]{H_n(f)}
% \end{tikzcd}
% \]
that $H_n(P)\cong \ker(H_n(f)).$
\end{proof}

\begin{lem}\label{lem: technical factorization in ch}
Let $j\colon X\rightarrow Y$ be a monomorphism in $\chf$ (i.e. a cofibration), such that it induces a monomorphism in homology in each degree. Let $n\in \bz$.
Then the map $j$ factors in $\chf$ as:
\[
\begin{tikzcd}
X\ar[hook]{rr}{j}\ar[hook]{dr}[swap]{j(n)} & & Y\\
& Y(n) \ar{ur}[swap]{p(n)} &
\end{tikzcd}
\]
where $Y(n)$ is a chain complex built with the following properties.
\begin{itemize}
\item The chain map $p(n)\colon Y(n)\rightarrow Y$ is a $\iio$-Postnikov tower.

\item The chain map $j(n)\colon X\rightarrow Y(n)$ is a monomorphism (i.e. a cofibration in $\chf$).

\item The $\k$-module $(Y(n))_i$ differs from $Y_i$ only in degree $i=n-1$.

\item In degrees $i\neq n$ in homology, we have $H_i(Y(n))\cong H_i(Y)$ and the maps:
\[
H_i(j(n))\colon H_i(X)\longrightarrow H_i(Y(n))\cong H_i(Y),
\]
are precisely the maps $H_i(j)\colon  H_i(X)\rightarrow H_i(Y)$.
In particular, the maps $H_i(j(n))$ are monomorphisms. Moreover, if the maps $H_i(j)$ are isomorphisms, then so are the maps $H_i(j(n))$.
\item In degree $n$ in homology, the map
$
H_n(j(n))\colon H_n(X)\stackrel{\cong}\longrightarrow H_n(Y(n))
$
is an isomorphism.
\end{itemize}
\end{lem}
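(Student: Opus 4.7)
The plan is to construct $F_n(Y)$ as a single pullback of $Y$ along a morphism in $\iio$, by invoking Lemma \ref{lem: pullback homology of X}. I will set $V := \coker(H_n(j))$; since $H_n(j)$ is a monomorphism, the natural quotient $q : H_n(Y) \twoheadrightarrow V$ has kernel $\im(H_n(j))$. The task reduces to producing a chain map $f : Y \to S^n(V)$ whose component $f_n : Y_n \to V$ is surjective, non-trivial only on $n$-cycles, induces $q$ on $H_n$, and satisfies $f \circ j = 0$ strictly, not merely up to homotopy.

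The crucial step will be choosing splittings carefully so that $f \circ j = 0$ on the nose. By Proposition \ref{prop: split in fields}, pick a decomposition $X_n \cong Z_n(X) \oplus C$. Injectivity of $j_{n-1}$ forces $j_n(C) \cap Z_n(Y) = 0$ (if $j_n(c)$ were a cycle, then $j_{n-1}(dc) = 0$ would give $dc = 0$ and hence $c = 0$), so semisimplicity of $\k$ lets me extend $j_n(C)$ to a complement $W$ of $Z_n(Y)$ in $Y_n$. Defining $f_n$ as the composite $Y_n \twoheadrightarrow Z_n(Y) \twoheadrightarrow H_n(Y) \stackrel{q}{\twoheadrightarrow} V$ via the splitting $Y_n = Z_n(Y) \oplus W$ makes $f_n$ surjective and non-trivial only on cycles, and a direct calculation on $x = z + c \in Z_n(X) \oplus C$ gives $f_n(j_n(x)) = q(H_n(j)[z]) = 0$, whence $f \circ j = 0$.

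Applying Lemma \ref{lem: pullback homology of X} to this $f$ produces $F_n(Y) := Y \times_{S^n(V)} D^n(V)$, and the projection $F_n(p_j) : F_n(Y) \to Y$ is a $\iio$-Postnikov tower by construction, being a single pullback of $D^n(V) \to S^n(V)$. Because $f \circ j = 0$, pairing $j$ with the zero map $X \to D^n(V)$ yields a unique lift $F_n(j) : X \to F_n(Y)$ via the pullback's universal property, and the explicit formulas for $P_i$ in Lemma \ref{lem: pullback homology of X} show that $F_n(j)_i = j_i$ for $i \neq n-1$ and $F_n(j)_{n-1}(x) = (j_{n-1}(x), 0)$, so $F_n(j)$ is a monomorphism. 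The homology claims then follow directly from Lemma \ref{lem: pullback homology of X}: $H_i(F_n(j))$ identifies with $H_i(j)$ for $i \neq n$, while $H_n(F_n(Y)) = \ker(H_n(f)) = \ker q = \im(H_n(j))$ identifies $H_n(F_n(j))$ with the corestriction of the monomorphism $H_n(j)$ onto its image, hence an isomorphism. The main obstacle throughout is the strictness of $f \circ j = 0$, which is what dictates the careful simultaneous choice of splittings of $X_n$ and $Y_n$; semisimplicity of $\k$ is exactly what makes this linear-algebraic maneuvering possible.
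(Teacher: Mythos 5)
Your proof is correct and follows essentially the same route as the paper's: set $V=\coker(H_n(j))$, construct a chain map $f:Y\rightarrow S^n(V)$ with $f\circ j=0$ strictly, form the pullback of $D^n(V)\rightarrow S^n(V)$ along $f$, and read off all the stated properties from Lemma \ref{lem: pullback homology of X}. The one point where you are more careful than the paper is the strict vanishing of $f\circ j$: the paper simply asserts $j(\overline{X_n})\subseteq \overline{Y_n}$, which is not automatic for an arbitrary choice of complements but does hold once the complement of $Z_n(Y)$ in $Y_n$ is chosen to contain $j_n$ of the complement of $Z_n(X)$ --- exactly the adjustment you make explicitly, so your write-up supplies a detail the paper leaves implicit.
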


\begin{proof} We construct below the chain complex $Y(n)$ explicitly using Lemma \ref{lem: pullback homology of X}.
By Proposition \ref{prop: split in fields}, we can decompose $Y_n$ as:
\[
Y_n\cong H_n(Y) \oplus \overline{Y_n}\cong \im\Big( H_n(j)\Big) \oplus \coker\Big( H_n(j) \Big) \oplus \overline{Y_n},
\]
where $\overline{Y_n}$ is the direct sum of the copies of the  boundaries.
Denote the $\k$-module $\coker( H_n(j))$ as $V$ and define the linear map $f_n\colon Y_n\rightarrow V$ to be the natural projection. In particular, the map $f_n$ sends $n$-boundaries of $Y$ to zero. This defines a chain map:
$f\colon Y\longrightarrow S^n(V)$.
Notice that since $j\colon X\rightarrow Y$ is a monomorphism, we get $j(\overline{X_n})\subseteq \overline{Y_n}$, and so, by construction of $f$, we get that the composite: $
\begin{tikzcd}
X\ar{r}{j} & Y \ar{r}{f} & S^n(V),
\end{tikzcd}
$
is the zero chain map. 
We obtain $Y(n)$ as the following pullback in $\chf$, with a chain map $j(n)$ induced by the universality of pullbacks:
\[
\begin{tikzcd}
X\ar[dashed]{dr}{j(n)}[swap]{\exists !} \ar[bend left]{drr}{0} \ar[bend right=70pt, hook]{ddr}[swap]{j} & & \\
& Y(n)\pull\ar{r}\ar{d}[swap]{p(n)} & D^n(V) \ar{d}\\
& Y\ar{r}[swap]{f} & S^n(V).
\end{tikzcd}
\]
By construction, the induced chain map $p(n)\colon Y(n)\rightarrow Y$ is in $\Post_\iio$. From the commutativity of the diagram:
\[
\begin{tikzcd}[row sep=large, column sep=large]
X \ar{r}{j(n)}\ar[hook]{dr}[swap]{j} & Y(n)\ar{d}{p(n)} \\
& Y,
\end{tikzcd}
\]
since $j$ is a monomorphism, so is $j(n)$. Since $H_i(j)$ is a monomorphism for $i\in\bz$, then so is $H_i(j(n))$.
By Lemma \ref{lem: pullback homology of X}, we get $H_i(Y(n))\cong H_i(Y)$ for all $i\neq n$. For $i=n$, we get:
\[
\begin{tikzcd}
H_n(Y(n))\cong \ker \Big( H_n(f) \Big) \cong H_n(X),
\end{tikzcd}
\]
as we have the short exact sequence of $\k$-modules:
\[
\begin{tikzcd}[column sep= large]
0 \ar{r} & H_n(X) \ar{r}{H_n(j)} & H_n(Y) \ar{r}{H_n(f)} & V \ar{r} & 0,
\end{tikzcd}
\]
since $V=\coker( H_n(j) )$. Thus $H_n(j(n))$ is an isomorphism as desired.
\end{proof}

\begin{proof}[Proof of Lemma \ref{lem: postX in ch}]
The first statement follows from the second using the retract argument.
Given a chain map $f\colon X\rightarrow Y$, we build below a chain complex $\widetilde{W}$ as a tower in $\chf$ using Lemma \ref{lem: technical factorization in ch} repeatedly so that $f$ factors as:
%%\[
%%\begin{tikzcd}
%%X\ar{rr}{f}\ar[hook]{dr}{\sim}[swap]{\widetilde{j}} & & Y,\\
%%& \widetilde{W}\ar{ur}[swap]{\widetilde{p}}
%%\end{tikzcd}
%%\]
\[
\begin{tikzcd}
&[+60pt] \widetilde{W}\ar{d} & \\
& \vdots\ar{d} & \\
& W^+(-1)\ar{d}{p^-(-1)} & \\
& W^+\ar{d} & \\
& \vdots\ar{d} & \\
& W^+(1)\ar{d}{p^+(1)} & \\
& W(0)\ar{d}{p^+(0)} & \\
X \ar[bend right=15pt]{rr}[description]{f}\ar{r}[description]{j}\ar[bend left=13pt]{ur}[description]{j^+(0)}\ar[bend left]{uur}[description]{j^+(1)}\ar[bend left=20pt]{uuuur}[description]{j^+}\ar[bend left]{uuuuur}[description]{j^-(-1)}\ar[bend left]{uuuuuuur}[description]{\widetilde{j}} & W \ar{r}[description]{p} & Y
\end{tikzcd}
\]
where $\widetilde{j}$ is a monomorphism and a quasi-isomorphism, and all the vertical maps and $p$ are in $\Post_\iio$. The composition of all the vertical maps and $p$ is a chain map $\widetilde{W}\rightarrow Y$ which is in $\Post_\iio$, by Proposition \ref{prop: closed post}.

We first start by noticing the following factorization:
\[
\begin{tikzcd}
X\ar{rr}{f}\ar[hook]{dr}[swap]{j} & & Y,\\
& X\oplus Y \ar{ur}[swap]{p}
\end{tikzcd}
\]
induced by the following pullback in $\chf$:
\[
\begin{tikzcd}
X\ar[equals, bend left]{drr} \ar[bend right]{ddr}[swap]{f} \ar[hook]{dr}{j} & & \\
& X \oplus Y \pull \ar{d}{p} \ar{r} & X \ar{d}\\
& Y \ar{r} & 0.
\end{tikzcd}
\]
The map $p$ is in $\Post_\iio$ by Lemma \ref{lem: all fibrants are cocells} and Proposition \ref{prop: closed post}. By commutativity of the upper triangle, we see that the monomorphism $j$ induces a monomorphism in homology. We write $W=X\oplus Y$.

The second step is to replace the map $j\colon X\rightarrow W$ by a chain map $j^+\colon X\rightarrow W^+$ that remains a cofibration, a monomorphism in homology in negative degrees, and an isomorphism in homology in non-negative degrees. We construct $W^+$ as the limit $\llim{n\geq 0}(W^+(n))$ in $\chf$ of the tower of maps:
\[
\begin{tikzcd}[row sep=large]
\cdots \ar{r} & W^+(2)\ar{r}{p^+(2)} & W^+(1)\ar{r}{p^+(1)} & W^+(0) \ar{r}{p^+(0)} & W,
\end{tikzcd}
\]
where each $p^+(n)$ is in $\Post_\iio$. The map $j^+\colon X\rightarrow W^+$ is induced by the monomorphisms $j^+(n)\colon X\rightarrow W^+(n)$ which are compatible with the tower:
\[
\begin{tikzcd}[row sep=large]
&[+20pt] W^+(n)\ar{d}{p^+(n)}\\
X \ar{ur}{j^+(n)} \ar{r}[swap]{j^+(n-1)} & W^+(n-1),
\end{tikzcd}
\]
and $j^+(n)$ induces an isomorphism in homology in degrees $i$, for $0\leq i \leq n$, and a monomorphism otherwise.
We construct the chain complexes $W^+(n)$ of the tower inductively as follows. 
\begin{itemize}
\item For the initial step, apply Lemma \ref{lem: technical factorization in ch} to the monomorphism $j\colon X\rightarrow W$, for $n=0$. Write $W^+(0)=W(0)$. The cofibration $j^+(0)$ defined as the chain map
\[
j(0)\colon X\longrightarrow W(0)=W^+(0)
\]
is an isomorphism in homology in degree $0$, and a monomorphism in other degrees.
The chain map $p^+(0)$ defined as the map:
\[
p(0)\colon W^+(0)=W(0)\longrightarrow W,
\]
is a $\iio$-Postnikov tower.

\item For the inductive step, suppose, for a fixed integer $n\geq 0$, the chain complex $W^+(n)$ is defined, together with a cofibration $j^+(n)\colon X\rightarrow W^+(n)$ inducing an isomorphism in homology for degrees $i$, where $0\leq i\leq n$, and a monomorphism in homology for other degrees. 
Apply Lemma \ref{lem: technical factorization in ch} to the monomorphism $j^+(n)$ for the degree $n+1$. Write $W^+(n+1):=(W^+(n))(n+1)$.
The cofibration $j^+(n+1)$ defined as the chain map:
\[
(j^+(n))(n+1)\colon X\longrightarrow W^+(n+1),
\]
is an isomorphism in homology in degrees $i$ where $0\leq i\leq n+1$, and a monomorphism in other degrees. We obtain a $\iio$-Postnikov tower  $p^+(n+1)\colon W^+(n+1)\rightarrow W^+(n)$ 
such that the following diagram commutes:
\[
\begin{tikzcd}
X\ar[hook]{rr}{j^+(n)}\ar[hook]{dr}[swap]{j^+(n+1)} & & W^+(n)\\
 & W^+(n+1).\ar{ur}[swap]{p^+(n+1)}
\end{tikzcd}
\]
\end{itemize}
The induced map $j^+\colon X\rightarrow W^+$ is a monomorphism of chain complexes. Indeed, for any fixed $i\in \bz$, we have:
\[
{\left(W^+(i+1)\right)}_i\cong{\left(W^+(i+2)\right)}_i\cong{\left(W^+(i+3)\right)}_i\cong\cdots.
\]
Thus $\left(W^+\right)_i\cong{\left(W^+(i+1)\right)}_i$.
Therefore the linear map $(j^+)_i\colon X_i\rightarrow (W^+)_i$ is the linear map:
\[
{\left(j^+(n+1)\right)}_i\colon X_i \longrightarrow {\left(W^+(i+1)\right)}_i,
\]
which is a monomorphism. Similarly, we get:
$
H_i(W^+)\cong H_i(W^+(i+1)),
$
for all $i\in\bz$, and so $j^+$ is a monomorphism in negative degrees in homology, and an isomorphism in homology in non-negative degrees.

The last step is to replace the map $j^+\colon  X\rightarrow W^+$ by the desired chain map $\wt{j}:X\rightarrow \wt{W}$ that is an acyclic cofibration. We construct $\wt{W}$ similarly as $W^+$ (inductively applying Lemma \ref{lem: technical factorization in ch}) but in negative degrees.
We build $\wt{W}$ as the limit $\llim{n\geq 0}(W^-(-n))$ in $\chf$ of the tower of maps:
\[
\begin{tikzcd}[row sep=large]
\cdots \ar{r} & W^+(-2)\ar{r}{p^-(-2)} & W^-(-1) \ar{r}{p^-(-1)} & W^-(0)=W^+,
\end{tikzcd}
\]
where each $p^-(n)$ is in $\Post_\iio$. The map $\wt{j}\colon X\rightarrow \wt{W}$ is induced by the monomorphisms $j^-(-n)\colon X\rightarrow W^-(-n)$ which are compatible with the tower:
\[
\begin{tikzcd}[row sep=large]
&[+20pt] W^-(-n)\ar{d}{p^-(-n)}\\
X \ar{ur}{j^-(-n)} \ar{r}[swap]{j^-(-(n-1))} & W^-(-(n-1)),
\end{tikzcd}
\]
and $j^-(-n)$ induces an isomorphism in homology in degrees $i$, for $i\geq -n$, and a monomorphism otherwise. Similarly as the positive case, the map $\wt{j}\colon X\rightarrow \wt{W}$ can be shown to be a monomorphism and quasi-isomorphism, hence an acyclic cofibration, as desired.
\end{proof}

\section{Example: Simplicial and Connective Comodules}\label{Sec: example on comodules}

We introduce here, in Theorem \ref{thm: cocell presentation of comodules over simply connected}, a Postnikov presentation for simplicial and connective differential graded comodules, over simply connected coalgebras (Definition \ref{defi: simply connected}). This provides us with an efficient inductive fibrant replacement as a Postnikov tower (Corollary \ref{SUPER IMPORTANT COR}). We begin to adapt our results on $\chf$ in previous section to $\chfo$ and $\smodk$.

\begin{defi}\label{defi: generating dold kan fibrations}
Denote by $\tau_{\geq 0} \colon\chf \rightarrow \chfo$ the $0$-th (homological) truncation (see \cite[1.2.7]{weibel}). From the sets and classes of Definition \ref{def: GENERATING FIB}, we denote their image under the truncation by:
\[
\begin{tikzcd}[column sep= small]
\iii=\{ D^n\longrightarrow S^n\}_{n\geq 1}\cup \{0\rightarrow S^0\}, &\jjj=\{ D^n\longrightarrow 0\}_{n\geq 1},
\end{tikzcd}
\]
and:
\begin{eqnarray*}
\iiio &:= &  \Big\lbrace D^n(V) \longrightarrow S^n(V) \mid V \text{ any $\k$-module} \Big\rbrace_{n\geq 1}\\
& &  \bigcup  \Big\lbrace 0 \longrightarrow S^0(V) \mid V \text{ any $\k$-module} \Big\rbrace.
\end{eqnarray*}
We also obtain sets and classes in $\smodk$ via the equivalence $\Gamma\colon\chfo\rightarrow \smodk$. For $n\geq 0$, we denote by $\emm:=\Gamma(S^n(V))$ which corresponds precisely to the simplicial model of the Eilenberg-Mac Lane space of $V$ in degree $n$.
For $n\geq 1$, we denote by $\emmm:=\Gamma(D^n(V))$ which corresponds to the based path of $\emm$.  Define:
\[
\iid=\{ PK(\k, n) \longrightarrow K(\k, n)\}_{n\geq 1}\cup \{0\rightarrow K(\k, 0)\}, 
\]
\[
\jjd=\{ PK(\k, n)\longrightarrow 0\}_{n\geq 1},
\]
and:
\begin{eqnarray*}
\iido &:= &  \Big\lbrace PK(V,n) \longrightarrow K(V,n) \mid V \text{ any $\k$-module} \Big\rbrace_{n\geq 1}\\
& &  \bigcup  \Big\lbrace 0 \longrightarrow K(V, 0) \mid V \text{ any $\k$-module} \Big\rbrace.
\end{eqnarray*}
\end{defi}

The Quillen adjunction: 
$\begin{tikzcd}
\chfo \ar[shift left=2, hook]{r} & \ar[shift left=2]{l}[swap]{\perp} \chf:{\tau_{\geq 0}}
\end{tikzcd}$ shows that the model structure on $\chfo$ is in fact left-induced from $\chf$. From Theorem \ref{thm: fib gen of ch} and Proposition \ref{prop: left induce preserve fib gen}, we obtain that $\chfo$ and $\smodk$ are fibrantly generated. As pointed out in Remark \ref{rem: left-induced does not preserve postnikov}, it is not automatic that we obtain Postnikov presentations for $\chfo$ and $\smodk$ from the one in Theorem \ref{thm: cocell presentation of chains}. However, we claim we can adapt the arguments in previous section to obtain the following result. Alternatively, one can apply Theorem \ref{thm: cocell presentation of comodules over simply connected} below with $C=\k$. 

\begin{thm}
Let $\k$ be a commutative ring with global dimension zero.
\begin{enumerate}[label=\upshape\textbf{(\roman*)}]

\item The model category of non-negative chain complexes $\chfo$ is fibrantly generated by the pair of sets $(\iii, \jjj)$. The pair $(\iiio, \jjj)$ is a Postnikov presentation of $\chfo$.

\item The model category of simplicial $\k$-modules $\smodk$ is fibrantly generated by the pair of sets $(\iid, \jjd)$. The pair $(\iido, \jjd)$ is a Postnikov presentation of $\smodk$.
\end{enumerate}
\end{thm}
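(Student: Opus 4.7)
The plan is two-fold: first establish the statement for $\chfo$ by specializing the arguments of Section \ref{Sec: example on chains} to non-negative degrees, then transport the result to $\smodk$ across the Dold-Kan equivalence $\Gamma:\chfo\to\smodk$. For the fibrantly generated part, I would invoke Proposition \ref{prop: left induce preserve fib gen} applied to the adjunction whose left adjoint is the inclusion $\chfo\hookrightarrow\chf$ and whose right adjoint is the good truncation $\tau_{\geq 0}$: the model structure on $\chfo$ is left-induced from $\chf$, since monomorphisms and quasi-isomorphisms are detected by the inclusion. Combining this with Theorem \ref{thm: fib gen of ch} yields fibrant generation by $(\tau_{\geq 0}(\ii),\tau_{\geq 0}(\jj))$, and a direct computation gives $\tau_{\geq 0}(D^n\to S^n)=(D^n\to S^n)$ for $n\geq 1$, $\tau_{\geq 0}(D^0\to S^0)=(0\to S^0)$, $\tau_{\geq 0}(D^n\to 0)=(D^n\to 0)$ for $n\geq 1$, and trivial maps in all other cases. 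Discarding these trivial generators recovers exactly $(\iii,\jjj)$.

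For the Postnikov presentation of $\chfo$, Remark \ref{rem: left-induced does not preserve postnikov} warns that left-induction does not transfer this automatically, so I would re-run the proofs of Lemmas \ref{EASY LEMMA} and \ref{lem: postX in ch} in the non-negative setting. Proposition \ref{prop: split in fields} restricts to a decomposition $X\cong \prod_{n\geq 0}S^n(V_n)\oplus \prod_{n\geq 1}D^n(W_n)$ for any $X\in\chfo$. The ``mono into an acyclic complement'' trick in the proof of Lemma \ref{EASY LEMMA} then factors any $f:X\to Y$ in $\chfo$ as $X\hookrightarrow Z\oplus Y\to Y$, where $Z$ is a product of disks $D^n$ for $n\geq 1$ only, giving the cofibration / $\jjj$-Postnikov tower factorization. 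For the $\iiio$-Postnikov factorization, the inductive tower of Lemma \ref{lem: postX in ch} simplifies: since objects are already concentrated in non-negative degrees, the negative stages $G_n^-$ are unnecessary and only the countable sequence of positive stages $G_n^+$ is iterated, whose limit remains in $\chfo$.

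The chief obstacle is the degree-zero stage $G_0^+$. In $\chf$ this step pulls back along $D^0(V)\to S^0(V)$ via Lemma \ref{lem: pullback homology of X}, but $D^0(V)$ lies outside $\chfo$, so I would pull back instead along the generator $0\to S^0(V)\in\iiio$. Concretely, for a monomorphism $j:X\hookrightarrow W$ in $\chfo$ (the input of the stage) and the map $f:W\to S^0(V)$ with $V=\coker H_0(j)$ built as in Lemma \ref{lem: technical factorization in ch}, the pullback is the kernel $K=\ker f$, which agrees with $W$ in degrees $\geq 1$ and has $K_0=\ker(W_0\to V)$. Using $B_{-1}(W)=0$ one computes $H_0(K)\cong \im H_0(j)\cong H_0(X)$ and $H_i(K)\cong H_i(W)$ for $i\geq 1$; hence the induced chain map $X\to K$ is a monomorphism that becomes an isomorphism on $H_0$, exactly the property required to launch the induction, and the stages $n\geq 1$ then proceed verbatim with $D^n(V)\to S^n(V)$. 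Finally, part (ii) follows by transport along $\Gamma$: since $\iid=\Gamma(\iii)$, $\jjd=\Gamma(\jjj)$ and $\iido=\Gamma(\iiio)$ by definition, and $\Gamma$ is an equivalence of categories preserving lifting properties, pullbacks, and ordinal-indexed limits (being both a left and a right adjoint), the fibrant generation and the Postnikov presentation of $\chfo$ immediately transport to those of $\smodk$.
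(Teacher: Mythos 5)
Your proposal is correct and follows essentially the route the paper itself indicates: fibrant generation via left-induction along $\tau_{\geq 0}$ (Proposition \ref{prop: left induce preserve fib gen} combined with Theorem \ref{thm: fib gen of ch}), and the Postnikov presentation by adapting the arguments of Section \ref{Sec: example on chains}, your replacement of the degree-zero pullback along $D^0(V)\to S^0(V)$ by a pullback along the generator $0\to S^0(V)$ being exactly the adjustment needed. The paper also offers the shortcut of specializing Theorem \ref{thm: cocell presentation of comodules over simply connected} to $C=\k$, which handles (i) and (ii) simultaneously; your transport of (ii) along the equivalence $\Gamma$ is an equally valid alternative, since $\Gamma$ matches weak equivalences, cofibrations and fibrations on the nose.
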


\subsection{Left-induced model structures on comodules}

We present here the model structures for como\-dules in $\smodk$ and $\chfo$, due to \cite{left2, left3}. We first introduce general definitions.

\begin{defi}\label{defi: coalgebra}
Let $(\C, \otimes, \bI)$ be a symmetric monoidal category.
A \emph{coalgebra} $(C, \Delta, \varepsilon)$ in $\C$ consists of an object $C$ in $\C$ together with a coassociative comultiplication $\Delta \colon C\rightarrow C\otimes C$, such that the following diagram commutes: 
\[
\begin{tikzcd}[column sep =large]
C\ar{r}{\Delta} \ar{d}[swap]{\Delta} &C\otimes C\ar{d}{\textup{id}_C\otimes \Delta}\\
C\otimes C \ar{r}{\Delta\otimes \textup{id}_C} & C\otimes C\otimes C,
\end{tikzcd}
\]
and admits a counit morphism $\varepsilon\colon C\rightarrow \mathbb{I}$ such that we have the following commutative diagram: 
\[
\begin{tikzcd}
C\otimes C \ar{r}{\textup{id}_C\otimes \varepsilon} & C\otimes \mathbb{I} \cong C \cong \mathbb{I}\otimes C & C\otimes C \ar{l}[swap]{\varepsilon \otimes \textup{id}_C}\\
& C.\ar[equals]{u}\ar[bend left]{ul}{\Delta} \ar[bend right]{ur}[swap]{\Delta} &
\end{tikzcd}
\]
The coalgebra is \emph{cocommutative} if the following diagram commutes: 
\[
\begin{tikzcd}
C\otimes C \ar{rr}{\tau} && C\otimes C\\
& C,\ar{ul}{\Delta}\ar{ur}[swap]{\Delta} &
\end{tikzcd}
\]
where $\tau$ is the twist isomorphism from the symmetric monoidal structure of $\C$. A morphism of coalgebras $f\colon (C,\Delta, \varepsilon)\rightarrow (C',\Delta',\varepsilon')$ is a morphism $f\colon C\rightarrow C'$ in $\C$ such that the following diagrams commute: 
\[
\begin{tikzcd}
C\ar{r}{f} \ar{d}[swap]{\Delta} & C'\ar{d}{\Delta'} & C\ar{r}{f}\ar{dr}[swap]{\varepsilon} & C'\ar{d}{\varepsilon'}\\
C\otimes C \ar{r}{f\otimes f} & C'\otimes C', & & \I.
\end{tikzcd}
\]
%%The coalgebra is \emph{coaugmented} if there is a map $\eta:\I\rightarrow C$ of coalgebras.
%%We denote $\comon(\C)$ the category of comonoids in $\C$. We denote $\ccomon(\C)$ the category of cocommutative comonoids in $\C$.
\end{defi}

\begin{defi}\label{defi: comodules in ordinary}
Let $(\C, \otimes, \I)$ be symmetric monoidal category.
Let $(C, \Delta, \varepsilon)$ be a coalgebra in $\C$. A \emph{right comodule $(X, \rho)$ over $C$}, or a \emph{right $C$-comodule}, is an object $X$ in $\C$ together with a coassociative and counital right coaction morphism $\rho\colon X\rightarrow X\o C$ in $\C$, i.e., the following diagram commutes:
\[
\begin{tikzcd}
X\ar{r}{\rho}\ar{d}[swap]{\rho} & X\o C \ar{d}{\rho \o \id_C}  & & X \ar{r}{\rho}\ar[equals]{ddr} & X\o C \ar{d}{ \id_X \o \varepsilon}\\
X\o C \ar{r}[swap]{ \id_X\o \Delta} &X\o C\o C, & & & X\o \bI  \ar{d}{\cong} \\[-5pt]
  & & &  & X. 
\end{tikzcd}
\]
The category of right $C$-comodules in $\C$ is denoted by $\comod_C(\C)$. Similarly, we can define the category of left $C$-comodules where objects are endowed with a left coassociative counital coaction $X\rightarrow C\o X$ and we denote the category by ${}_{C}\mathsf{CoMod}(\C)$.
\end{defi}

\begin{rem}
If $C$ is a cocommutative comonoid in $\C$ the categories of left and right comodules over $C$ are naturally isomorphic: $
{}_{C}\mathsf{CoMod}(\C)\cong \mathsf{CoMod}_C(\C)$.
In this case, we omit to mention if the coaction is left or right.
\end{rem}

% \begin{rem}
% Since a coalgebra in $\C$ is an algebra in $\C\op$, then we can define the category of right comodules as modules in the opposite category: $\mathsf{CoMod}_C(\C)={\left(\mathsf{Mod}_C(\C\op)\right)}\op$,
% and similarly for the left case.
% \end{rem}

% \begin{rem}
% All the results for this paper will be stated for right comodules. But each statement remains valid for left comodules, and in fact more generally for any \emph{bicomodule} over the same coalgebra.
% \end{rem}

\begin{defi}
For any object $X$ in $\C$, we say that $X\otimes C$ is the \emph{cofree right $C$-comodule generated by $X$}. Similarly, we can define the \emph{cofree left $C$-comodule generated by $X$ as $C\otimes X$}.
\end{defi}

\begin{prop}[{\cite[2.2]{connectivecomod}}]
Let $\M$ be either the monoidal model category $\smodk$ or $\chfo$.
Let $C$ be a coalgebra in $\M$.
Then the category of right $C$-comodules in $\M$ admits a combinatorial model category left-induced from the forgetful-cofree adjunction:
\[
\begin{tikzcd}[column sep= large]
\comod_C(\M) \ar[shift left=2]{r}{U}[swap]{\perp} & \M. \ar[shift left=2]{l}{-\o C}
\end{tikzcd}
\]
In particular, $U$ preserves and reflects cofibrations and weak equivalences. The model structure is combinatorial and simplicial. Every object is cofibrant.
\end{prop}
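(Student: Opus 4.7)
The plan is to apply the dual Quillen path object argument (i.e., the ``left cylinder'' argument) of \cite[2.2.1]{left2} to the adjunction $U \dashv (-\otimes C)$, then transfer combinatoriality via Proposition \ref{prop: left-induced preserve combinatorial}. First I would verify the structural hypotheses: $\comod_C(\M)$ is complete and cocomplete (colimits are created by $U$ as a left adjoint; limits are built from limits in $\M$ together with equalizers of coactions), and it is presentable since $\M$ is presentable and comodules form a category of coalgebras over an accessible comonad.

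Next I would construct, for each right $C$-comodule $(X,\rho)$, a candidate cylinder object. Fix a good cylinder $I$ on the monoidal unit of $\M$ (the object $\k\otimes \Delta^1$ in $\smodk$, or the standard chain cylinder in $\chfo$). The object $X\otimes I$ in $\M$ inherits a right $C$-coaction from $\rho$, namely
\[
X\otimes I \xrightarrow{\rho\otimes \id_I} X\otimes C\otimes I \xrightarrow{\id_X\otimes \tau} X\otimes I\otimes C,
\]
and the folding/diagonal maps $X\sqcup X \to X\otimes I \to X$ are morphisms of comodules, where the coproduct is computed in $\M$ (since $U$ is a left adjoint). After applying $U$, this is precisely the good cylinder in $\M$, so the hypotheses of \cite[2.2.1]{left2} are met and the left-induced model structure on $\comod_C(\M)$ exists. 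By construction, $U$ preserves and reflects both cofibrations and weak equivalences.

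With the model structure in place, Proposition \ref{prop: left-induced preserve combinatorial} gives combinatoriality, since $\smodk$ and $\chfo$ are cofibrantly generated by sets. For the claim that every object is cofibrant, note that the initial morphism $0\to X$ in $\comod_C(\M)$ is sent by $U$ to $0\to U(X)$, which is a cofibration in $\M$ since every object of $\smodk$ (resp.\ $\chfo$) is cofibrant; reflection of cofibrations finishes this. The simplicial enrichment is obtained by restricting the external $\sset$-action on $\M$: for a simplicial set $K$, the comodule $X\otimes K$ carries the induced coaction, and the pushout–product axiom in $\comod_C(\M)$ transfers from $\M$ because $U$ creates the relevant colimits and detects (acyclic) cofibrations.

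The main obstacle is not any single step in isolation but rather the need to confirm that each piece of structure genuinely lifts through $U$: presentability of $\comod_C(\M)$, well-definedness of the cylinder coaction on $X\otimes I$, and the simplicial tensoring. Once the comodule structures on these canonical constructions in $\M$ are verified to be functorial and compatible with the coalgebra $C$, the rest reduces to routine applications of the left-induction machinery of \cite{left1, left2, left3}.
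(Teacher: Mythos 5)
Your argument is correct and is essentially the intended proof: the paper itself only quotes \cite[2.12]{connectivecomod}, whose proof is exactly this dual Quillen path-object (left cylinder) argument of \cite[2.2.1]{left2} applied to the forgetful--cofree adjunction, with the cylinder $X\otimes I$ carrying the coaction you describe and combinatoriality, cofibrancy of all objects, and the simplicial structure transferred along $U$ just as you indicate. The only cosmetic point is that $U$ creates colimits because $\comod_C(\M)$ is the coEilenberg--Moore category of the comonad $-\otimes C$, not merely because $U$ is a left adjoint; this does not affect the argument.
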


\begin{cor}
Let $\k$ be a commutative ring with global dimension zero.
\begin{enumerate}[label=\upshape\textbf{(\roman*)}]
\item Let $C$ be a coalgebra in $\chfo$. The model category of  right $C$-comodules $\comod_C(\chfo)$ is fibrantly generated by the pair of sets $(\iii \otimes C, \jjj\otimes C)$.
\item Let $C$ be a coalgebra in $\smodk$. The model category of right $C$-comodules $\comod_C(\smodk)$ is fibrantly generated by the pair of sets $(\iid\otimes C, \jjd\otimes C)$.
\end{enumerate}
\end{cor}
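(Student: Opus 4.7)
The plan is to combine the immediately preceding theorem, which establishes that $\chfo$ is fibrantly generated by $(\iii,\jjj)$ and $\smodk$ is fibrantly generated by $(\iid,\jjd)$, with Proposition \ref{prop: left induce preserve fib gen}, which says that fibrantly generated structures are transferred along left-induction by applying the right adjoint. This is a one-step deduction.

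First I would recall the preceding proposition, which asserts that $\comod_C(\M)$ is left-induced from $\M$ via the forgetful-cofree adjunction
\[
\begin{tikzcd}[column sep=large]
U:\comod_C(\M) \ar[shift left=2]{r} & \ar[shift left=2]{l}[swap]{\perp} \M:(-\o C),
\end{tikzcd}
\]
in which the forgetful functor $U$ is the left adjoint and the cofree comodule functor $-\o C$ is the right adjoint. This puts us precisely in the situation of Proposition \ref{prop: left induce preserve fib gen}, where the right adjoint $R$ in that statement is $-\o C$.

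Next I would observe that $\iii$ and $\jjj$ (respectively $\iid$ and $\jjd$) are genuine sets, so applying the functor $-\o C$ levelwise yields sets of morphisms $\iii\o C$ and $\jjj\o C$ (respectively $\iid\o C$ and $\jjd\o C$) in $\comod_C(\M)$. Proposition \ref{prop: left induce preserve fib gen} then yields directly that $\comod_C(\chfo)$ is fibrantly generated by $(\iii\o C,\jjj\o C)$ and that $\comod_C(\smodk)$ is fibrantly generated by $(\iid\o C,\jjd\o C)$, which is the content of (i) and (ii).

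I do not foresee any obstacle here, since both the fibrant generation of the base categories and the transfer result have been established earlier in the paper; the corollary is a formal consequence. Note that we deliberately stop at fibrant generation and do not assert a Postnikov presentation, consistent with Remark \ref{rem: left-induced does not preserve postnikov}: without a cosmallness argument, a Postnikov presentation on $\M$ is not automatically transferred to $\comod_C(\M)$, and the stronger claim for the simply connected setting is reserved for Theorem \ref{thm: cocell presentation of comodules over simply connected}.
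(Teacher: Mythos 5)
Your proof is correct and is exactly the intended argument: the corollary follows by applying Proposition \ref{prop: left induce preserve fib gen} to the forgetful--cofree adjunction from the preceding proposition, with right adjoint $-\o C$, together with the fibrant generation of $\chfo$ and $\smodk$ established just before. Your closing remark about not claiming a Postnikov presentation is also the right caveat, consistent with Remark \ref{rem: left-induced does not preserve postnikov}.
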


\subsection{Postnikov presentations} We now introduce a Postnikov presentation for comodules in $\smodk$ and $\chfo$. We follow the approach of \cite{hess1} in which the finitely generated case was treated. See also \cite{joseph}. We first need to restrict the coalgebras considered.

\begin{defi}\label{defi: simply connected}
Let $R$ be any commutative ring. 
A chain complex $X$ over $R$ is \emph{simply connected} if: $X_0=R$, $X_1=0$ and $X_i=0$ for all $i<0$. A simplicial $R$-module $X$ is \emph{simply connected} if $X_0=R$ and there are no non-degenerate $1$-simplices.
\end{defi}

Over a commutative ring with global dimension zero $\k$, every simply connected simplicial modules or chain complexes is weakly equivalent to a simply connected object in the usual sense.

\begin{rem}
A simplicial $R$-module $X$ is simply connected if and only if $\mathsf{N}(X)$ is a simply connected chain complex. 
A chain complex $X$ over $R$ is simply connected if and only if $\Gamma(X)$ is a simply connected simplicial module.
\end{rem}
For any chain complex $C$ and any $\k$-module $V$, we see that the $i$-th term of the chain complex $S^n(V)\otimes C$ is the $\k$-module $V\otimes C_{i-n}$.
If we choose $C$ to be a simply connected differential graded $\k$-coalgebra, we get:
\[
\left(S^n(V)\otimes C\right)_i =\left\lbrace \begin{tabular}{ll}
$0$  & $i< n$,\\
$V$ & $i=n$,\\
$0$ & $i=n+1$,\\
$V\otimes C_{i-n}$ & $i\geq n+2$.
\end{tabular} \right.
\]
Thus, around the $n$-th term, the chain complex $S^n(V)\otimes C$ is similar to $S^n(V)$. 
We can therefore modify the homology of a $C$-comodule for a specific degree without modifying the lower degrees. Therefore the arguments of previous section can be applied. As pullbacks in comodules are computed in the underlying category in our cases, we can adapt Lemma \ref{lem: pullback homology of X} to obtain its following coalgebraic version.

\begin{lem}\label{lem: fixing homology for comodules}
Let $\k$ be a commutative ring with global dimension zero. 
Let $C$ be a simply connected coalgebra in $\chfo$.
Let $X$ be any object in $\comod_C(\chfo)$. Let $V$ be any $\k$-module. Let $n\geq 1$ be any integer.
Given a surjective linear map $f_n\colon X_n\rightarrow V$ non-trivial only on $n$th-homology in a splitting $X_n\cong H_n(X)\oplus B_n(X)\oplus B_{n-1}(X)$, there is a $C$-comodule map $f:X\rightarrow S^n(V)\otimes C$, and the pullback comodule $P$ in the following diagram in $\comod_C(\chfo)$:
\[
\begin{tikzcd}
P\pull \ar{r} \ar{d} & D^n(V)\o C\ar{d}\\
X\ar{r} & S^n(V)\o C,
\end{tikzcd}
\]
has homology:
\[
H_i(P)\cong\left\lbrace \begin{tabular}{ll}
$\ker\left(H_n(f)\right)$ & $i=n$,\\
$H_i(X)$ & $i<n$,
\end{tabular} \right.
\]
and we have $P_i=X_i$ for $0\leq i< n-1$ and $i=n$, and $P_{n-1}=X_{n-1}\oplus V$.
\end{lem}

A similar argument can be applied in the simplicial case and we can thus also fix the homotopy, layer by layer, in the comodule context. 
In order to obtain a coalgebraic variation of Lemma \ref{lem: postX in ch}, we need to understand towers.
The limit $\lim^C_n X_n$ of a tower $\{X(n)\}$ in $\comod_C(\M)$ is usually not equivalent to the limit $\lim_n U(X_n)$ in $\M$.

\begin{defi}
Let $\M$ be either $\smodk$ or $\chfo$.
A tower $\{X(n)\}$ in $\M$ \emph{stabilizes in each degree} if for each degree $i\geq 0$, the tower $\{X(n)_i\}$ of $\k$-modules stabilizes for $n \geq i+1$, i.e., for all $n\geq 0$, and all $0 \leq i \leq n$, we have that the map appearing in the tower induce isomorphisms:
\[
X(n+1)_i\cong X(n+2)_i\cong X(n+3)_i \cong\cdots.\]
Let $C$ be a coalgebra in $\M$.
A tower $\{X(n)\}$ in $\comod_C(\M)$ \emph{stabilizes in each degree} if the underlying tower $\{U(X(n))\}$ in $\M$ stabilizes in each degree.
\end{defi}

\begin{rem}\label{rem: dold-kan preserves stabilization in each degree}
A tower $\{ X(n)\}$ in $\smodk$ stabilizes in each degree if and only if $\{\mathsf{N} (X(n)) \}$ in $\chfo$ stabilizes in each degree.
A tower $\{ X(n)\}$ in $\chfo$ stabilizes in each degree if and only if $\{ \Gamma(X(n)) \}$ in $\smodk$ stabilizes in each degree.
\end{rem}

\begin{lem}\label{lem: cofree preserves stabilization of towers}
Let $\M$ be either $\smodk$ or $\chfo$.
Let $\{X(n)\}$ be a tower in $\M$ that stabilizes in each degree. Let $C$ be any object $\M$.
Then the tower $\Big\lbrace X(n)\otimes C\Big\rbrace$ in $\M$ also stabilizes in each degree and we have:
\[\Big(\lim_n X(n)\Big)\otimes C \cong \lim_n \Big(X(n) \otimes C\Big).\]
\end{lem}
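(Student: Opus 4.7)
The plan is to exploit the fact that in both $\smodk$ and $\chfo$, limits are computed degree-wise (respectively dimension-wise), and that the tensor product in a fixed degree depends only on finitely many degrees of each factor.

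First I would handle the claim that $\{X(n)\otimes C\}$ stabilizes in each degree. Fix a degree $m\geq 0$. In the simplicial case this is immediate since $(X(n)\otimes C)_m = X(n)_m\otimes_\k C_m$, and by hypothesis $X(n)_m$ stabilizes for $n\geq m+1$. In the chain case we have
\[
(X(n)\otimes C)_m = \bigoplus_{i+j=m} X(n)_i \otimes_\k C_j,
\]
which is a finite direct sum involving only the $\k$-modules $X(n)_i$ for $0\leq i \leq m$. By hypothesis each of these stabilizes for $n\geq i+1$, hence certainly for $n\geq m+1$, so the whole direct sum stabilizes for $n\geq m+1$.

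Next I would establish the isomorphism. Since limits in $\smodk$ and $\chfo$ are computed degree-wise, and a tower of $\k$-modules that eventually becomes constant has limit equal to its stabilized value, we see that for any tower $\{Y(n)\}$ in $\M$ that stabilizes in each degree and any fixed $m$,
\[
\bigl(\lim_n Y(n)\bigr)_m \cong Y(n_0)_m \quad\text{for all } n_0 \geq m+1.
\]
Applying this on both sides, in degree $m$ (chain case, with $n_0\geq m+1$):
\[
\Bigl(\bigl(\lim_n X(n)\bigr)\otimes C\Bigr)_m = \bigoplus_{i+j=m}\bigl(\lim_n X(n)\bigr)_i\otimes C_j \cong \bigoplus_{i+j=m} X(n_0)_i \otimes C_j,
\]
and
\[
\Bigl(\lim_n \bigl(X(n)\otimes C\bigr)\Bigr)_m \cong \bigl(X(n_0)\otimes C\bigr)_m = \bigoplus_{i+j=m} X(n_0)_i \otimes C_j.
\]
These identifications are clearly natural in $m$ and compatible with the differentials (respectively face and degeneracy maps), since the canonical comparison map $(\lim_n X(n))\otimes C \to \lim_n(X(n)\otimes C)$ induced by the universal property of the limit restricts in each degree to the map identifying these finite direct sums. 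The simplicial case is analogous (and simpler, as the tensor product in dimension $m$ is a single tensor).

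There is no real obstacle here; the result is essentially a bookkeeping consequence of two facts: tensor products in $\chfo$ and $\smodk$ are finite in each fixed degree, and eventually constant towers of $\k$-modules commute with every functor out of $\k$-modules. The content of the lemma is precisely that the eventual constancy of $\{X(n)\}$ in each degree is preserved under tensoring with any object $C$, so we do not need any flatness or exactness assumption on $C$.
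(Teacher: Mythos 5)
Your proof is correct and follows essentially the same route as the paper's: fix a degree, note that the tensor product in that degree involves only the finitely many degrees $\leq m$ of $X(n)$, each of which has stabilized for $n\geq m+1$, and then identify both sides of the limit comparison with the stabilized value $\bigoplus_{i+j=m}X(m+1)_i\otimes C_j$. The only difference is cosmetic: the paper treats just the $\chfo$ case and leaves the (easier, dimensionwise) simplicial case implicit, whereas you spell both out.
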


\begin{proof}
We prove only the case $\M=\chfo$.
For all $n\geq 0$, and all $0\leq i \leq n$, we have:
\begin{eqnarray*}
\Big( X(n+1)\otimes C \Big)_i & \cong & \bigoplus_{a+b=i} X(n+1)_a \otimes C_b\\
& \cong & \bigoplus_{a+b=i} X(n+2)_a \otimes C_b\\
& \cong & \Big( X(n+2)\otimes C \Big)_i, 
\end{eqnarray*}
as $0\leq a \leq i \leq n$. This argument generalizes in higher degrees and thus shows that the desired tower stabilizes in each degree. 
For all $i\geq 0$, notice that both $\Big(\Big(\lim_n X(n)\Big)\otimes C\Big)_i$ and $ \Big(\lim_n \Big(X(n) \otimes C\Big)\Big)_i$ are isomorphic to $\ds\bigoplus_{a+b=i} X(i+1)_a \o C_b$.
\end{proof}

\begin{cor}\label{cor: stab in each deg induces same limit}
Let $\M$ be either $\smodk$ or $\chfo$.
Let $C$ be a coalgebra in $\M$.
Let $\{X(n)\}$ be a tower in $\comod_C(\M)$ that stabilizes in each degree. 
Then the natural map:
\[
U(\lim^C_n X(n)) \stackrel{\cong}\longrightarrow \lim_n U(X(n))
\]
is an isomorphism in $\M$.
\end{cor}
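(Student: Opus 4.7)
The plan is to construct a $C$-comodule structure on $L := \lim_n U(X(n))$, computed in $\M$, and show that this comodule realizes $\lim^C_n X(n)$ in $\comod_C(\M)$; the claimed isomorphism is then immediate. Note that $U \colon \comod_C(\M) \to \M$ is a left adjoint (to the cofree functor $- \otimes C$), so it does not automatically preserve limits, and some input is needed to transfer the coactions to the limit.

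To define the coaction, observe that the tower maps $X(n+1) \to X(n)$ are comodule morphisms, so the coactions $\rho_n \colon U(X(n)) \to U(X(n)) \otimes C$ assemble into a morphism of towers in $\M$. This yields a canonical map $L \to \lim_n (U(X(n)) \otimes C)$. By Lemma \ref{lem: cofree preserves stabilization of towers}, since $\{X(n)\}$ stabilizes in each degree, the natural comparison $L \otimes C \to \lim_n (U(X(n)) \otimes C)$ is an isomorphism. Inverting this and precomposing gives a coaction $\rho \colon L \to L \otimes C$.

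Next, I would verify that $(L, \rho)$ is a $C$-comodule and realizes the limit in $\comod_C(\M)$. Counitality and coassociativity of $\rho$ each reduce, after postcomposition with the projections to $X(n)$, $X(n) \otimes C$, and $X(n) \otimes C \otimes C$, to the corresponding identities for each $(X(n), \rho_n)$; a second application of Lemma \ref{lem: cofree preserves stabilization of towers} identifies $\lim_n (U(X(n)) \otimes C \otimes C)$ with $L \otimes C \otimes C$, so the universal property of the limit in $\M$ forces the required diagrams to commute. For the universal property in $\comod_C(\M)$, any cone of comodule maps $\{f_n \colon Y \to X(n)\}$ induces a unique $f \colon U(Y) \to L$ in $\M$; the identity $(f \otimes C) \circ \rho_Y = \rho \circ f$ then follows by projecting to each $X(n) \otimes C$ and using that each $f_n$ is a comodule morphism.

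The main obstacle is conceptual rather than technical: one must supply a reason why the coaction lifts to the limit, and Lemma \ref{lem: cofree preserves stabilization of towers} is precisely what makes this possible by allowing the tensor $- \otimes C$ to commute past the limit on this particular tower. Once that isomorphism is invoked, the remaining verifications are formal diagram chases with universal properties, and no further use of the specific structure of $\M$ is needed beyond the symmetric monoidal setup common to both $\smodk$ and $\chfo$.
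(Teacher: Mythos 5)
Your proposal is correct and takes essentially the same approach as the paper: the paper's proof likewise combines Lemma \ref{lem: cofree preserves stabilization of towers} with the general principle that $U$ preserves and reflects precisely those limits that the comonad $-\otimes C$ preserves, constructing the coaction on $\lim_n U(X(n))$ degreewise from the stabilized tower. Your write-up simply makes that standard creation-of-limits argument explicit.
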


\begin{proof}
This follows directly from Lemma \ref{lem: cofree preserves stabilization of towers} as $U$ preserves and reflects a limit precisely when the comonad $-\otimes C:\M\rightarrow \M$ preserves that limit.
In detail, if we write $X:= \lim_n U(X(n))$, then the coaction $X\rightarrow X\otimes C$ is constructed as follows. For each degree $i\geq 0$, the map $X_i\rightarrow (X\otimes C)_i$ is entirely determined by the coaction $X(i+1)\rightarrow X(i+1)\otimes C$.
\end{proof}

Therefore, we can do a coalgebraic variation of Lemma \ref{lem: postX in ch} as the towers stabilize in each degree. Therefore we obtain the following result.

\begin{thm}\label{thm: cocell presentation of comodules over simply connected}
Let $\k$ be commutative ring with global dimension zero.
\begin{enumerate}[label=\upshape\textbf{(\roman*)}]
\item\label{item: Presentation for chains} Let $C$ be a simply connected coalgebra in $\chfo$.
Then $(\iiio \otimes C, \jjj\otimes C)$ is a Postnikov presentation of  $\comod_C(\chfo)$.
\item\label{item: Presentation for simplicial} Let $C$ be a simply connected coalgebra in $\smodk$.
Then $(\iido\otimes C, \jjd\otimes C)$ is a Postnikov presentation of $\comod_C(\smodk)$.
\end{enumerate}
\end{thm}

Using the vocabulary of \cite{HScomonad}, we essentially have shown that the comonad $-\otimes C$ is \emph{tractable} and \emph{allows the inductive arguments} and thus by \cite[5.8]{HScomonad} we indeed have that $(\iiio\o C, \jjj\o C)$ and $(\iido\otimes C, \jjd\otimes C)$ are Postnikov presentations.

An important consequence of this result is that we obtain an  explicit inductive fibrant replacement if we factorize 
the trivial map of right $C$-comodule $X\rightarrow 0$.
We recall that we define homotopy limits of towers as limits of fibrant towers as in Proposition \ref{prop: homotopy theory of towers}.

\begin{cor}\label{SUPER IMPORTANT COR}
Let $\M$ be either $\smodk$ or $\chfo$. Let $C$ be a simply connected coalgebra in $\M$.
Let $X$ be any right $C$-comodule in $\M$.
Then there exists a countable tower $\{X(n)\}$ in $\comod_C(\M)$ with limit $\wt{X}:=\lim_n^C X(n)$ where the right $C$-comodules $X(n)$ are built inductively as follows. 
\begin{itemize}
\item Define $X(0)$ to be the trivial $C$-comodule $0$.
\item Define $X(1)$ to be the cofree $C$-comodule $U(X)\otimes C$. The map $X(1)\rightarrow X(0)$ is trivial.
\item Suppose $X(n)$ was constructed for a certain $n\geq 1$. Then there exists a certain $\k$-module $V_n$ such that $X({n+1})$ is defined as the following pullback in $\comod_C(\M)$:
\begin{itemize}
\item for $\M=\chfo$:
\[
\begin{tikzcd}
X({n+1})\pull \ar{r} \ar{d} & D^{n}(V_n)\o C\ar{d}\\
X(n)\ar{r} & S^{n}(V_n)\o C,
\end{tikzcd}
\]
and we obtain the short exact sequence in $\comod_C(\chfo)$:
\[
\begin{tikzcd}
0 \ar{r} & S^{n-1}(V_n) \otimes C \ar{r} & X(n+1) \ar{r} & X(n) \ar{r} & 0
\end{tikzcd}
\]
\item for $\M=\smodk$: \[
\begin{tikzcd}
X({n+1})\pull \ar{r} \ar{d} & PK(V_n, n)\o C\ar{d}\\
X(n)\ar{r} & K(V_n, n)\o C,
\end{tikzcd}
\]
and we obtain the short exact sequence in $\comod_C(\smodk)$:
\[
\begin{tikzcd}
0 \ar{r} & K(V_n, n-1) \otimes C \ar{r} & X(n+1) \ar{r} & X(n) \ar{r} & 0
\end{tikzcd}
\]
\end{itemize}
\end{itemize}
The tower $\{X(n)\}$ enjoys the following properties.
\begin{enumerate}[label=\upshape\textbf{(\roman*)}]
\item The map $\wt{X}\longrightarrow 0$ is a $(\iiio \otimes C)$-Postnikov tower if $\M=\chfo$ or a $(\iido\otimes C)$-Postnikov tower if $\M=\smodk$. There exists an acyclic cofibration of right $C$-comodules $\begin{tikzcd}[column sep=small] X\ar[hook]{r}{\simeq} & \wt{X}.\end{tikzcd}$

\item If $X$ is a fibrant right $C$-comodule, then $X$ is a retract of $\wt{X}$.

\item For all $n\geq 1$, for all $0\leq i \leq n-1$, we have $H_i(X(n))\cong H_i(X)$ if $\M=\chfo$ or $\pi_i(X(n))\cong \pi_i(X)$ if $\M=\smodk$.

\item The tower $\{X(n)\}$ stabilizes in each degree. In particular: \[U(\wt{X})=U(\lim^C_n X(n))\cong \lim_n (U(X(n))).\]

\item Each map $X({n+1})\rightarrow X(n)$ for $n\geq 0$ is a fibration in $\comod_C(\M)$, and its underlying map $U(X(n+1))\rightarrow U(X(n))$ is also a fibration in $\M$. In particular $\wt{X}$ is the homotopy limit of $\{X(n)\}$ in $\comod_C(\M)$ and we have: $U(\wt{X})\simeq U(\holim^C_n X(n))\simeq \holim_n (U(X(n)))$.
\end{enumerate}
\end{cor}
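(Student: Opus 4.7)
The plan is to apply Lemma \ref{lem: factorization hard for comod} (when $\M = \chfo$) or Lemma \ref{lem: factorization hard for comod2} (when $\M = \smodk$) to the trivial map $X \rightarrow 0$, and to identify each intermediate stage of the resulting Postnikov tower with $X(n)$. Reading off the construction yields the tower described, together with the acyclic cofibration $j: X \hookrightarrow \wt{X}$ and the $(\iiio \otimes C)$- or $(\iido \otimes C)$-Postnikov tower $\wt{X} \rightarrow 0$, giving (i). For (ii), I would invoke the retract argument: since $X$ is fibrant, the fibration $X \rightarrow 0$ admits a lift against the acyclic cofibration $j$ sitting under $\id_X$, producing a retraction $r: \wt{X} \rightarrow X$ with $r \circ j = \id_X$.

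For (iii), the plan is to apply the comodule analogue of Lemma \ref{lem: pullback homology of X} (as used in the proof of Lemma \ref{lem: factorization hard for comod2}): at step $n$, the pullback against $D^n(V_n) \otimes C \rightarrow S^n(V_n) \otimes C$ (or its simplicial version) changes only $H_n$ (resp.\ $\pi_n$), leaving lower degrees untouched. Crucially, simple connectivity of $C$ guarantees that $S^n(V_n) \otimes C$ agrees with $S^n(V_n)$ in degrees below $n+2$ (and analogously for $K(V_n, n) \otimes C$ after normalization), so the pullback genuinely behaves as a Postnikov section on the underlying chain complex / simplicial module. An inductive choice of $V_n$ as an appropriate cokernel (as in \cite{connectivecomod}) then forces $H_i(X(n)) \cong H_i(X)$ for all $i \leq n-1$.

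For (iv), I would observe that the short exact sequence
\[
\begin{tikzcd}
0 \ar{r} & S^{n-1}(V_n) \otimes C \ar{r} & X(n+1) \ar{r} & X(n) \ar{r} & 0
\end{tikzcd}
\]
(and its simplicial counterpart with $K(V_n, n-1)\otimes C$) has kernel concentrated in degrees $\geq n-1$ precisely because $C_0 = \k$ and $C_1 = 0$. Thus the tower $\{U(X(n))\}$ stabilizes in each degree in the sense of the definition preceding Lemma \ref{lem: cofree preserves stabilization of towers}, whereupon Corollary \ref{cor: stab in each deg induces same limit} yields $U(\wt{X}) \cong \lim_n U(X(n))$.

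For (v), each map $X(n+1) \rightarrow X(n)$ is defined as a pullback against a generating fibration of $\comod_C(\M)$, hence is itself a fibration; its underlying map in $\M$ is also a fibration, since $D^n(V_n)\otimes C \rightarrow S^n(V_n)\otimes C$ (resp.\ $PK(V_n,n)\otimes C \rightarrow K(V_n,n)\otimes C$) is a levelwise surjection (resp.\ a Kan fibration) and $U$ preserves pullbacks. By Proposition \ref{prop: homotopy theory of towers}, the tower is Reedy fibrant both in $\comod_C(\M)$ and (after applying $U$) in $\M$, so $\wt{X}$ and $U(\wt{X})$ compute the respective homotopy limits. The main obstacle, which is already partly handled by the simply connected hypothesis, is keeping the homology bookkeeping honest in (iii)--(iv) while verifying that the simplicial analogue of Lemma \ref{lem: pullback homology of X} holds; this is where the precise structure of $S^n(V)\otimes C$ and $K(V,n)\otimes C$ in low degrees becomes essential.
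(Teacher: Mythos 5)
Your proposal is correct and follows exactly the route the paper intends: the corollary is obtained by applying Lemma \ref{lem: factorization hard for comod} (resp.\ Lemma \ref{lem: factorization hard for comod2}) to the trivial map $X\rightarrow 0$, with (ii) by the retract argument, (iii) by the comodule analogue of Lemma \ref{lem: pullback homology of X} together with simple connectivity of $C$, (iv) by the degreewise stabilization coming from the kernels $S^{n-1}(V_n)\otimes C$ and Corollary \ref{cor: stab in each deg induces same limit}, and (v) from the maps being pullbacks of generating fibrations combined with Proposition \ref{prop: homotopy theory of towers}. This matches the paper's (essentially one-line) justification and the supporting lemmas it cites, so no further comparison is needed.
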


\begin{defi}\label{def: postnikov tower of comodules}
Let $\M$ be either $\smodk$ or $\chfo$. Let $C$ be a simply connected coalgebra in $\M$.
Let $X$ be a right $C$-comodule in $\M$. 
The \emph{Postnikov tower of $X$} is the tower $\{X(n)\}$ in $\comod_C(\M)$ built in Corollary \ref{SUPER IMPORTANT COR}. 
\end{defi}

\begin{rem}
The Postnikov tower construction is \emph{not} functorial. Given a map of $C$-comodules $X\rightarrow Y$ we do not obtain natural maps $X(n)\rightarrow Y(n)$.
\end{rem}

\subsection{A fibrant-compatible monoidal structure}

We define a symmetric monoidal structure on comodules. For simplicity, we work over cocommutative coalgebras, but one can work instead with non-cocommutative coalgebra and obtain a non-symmetric monoidal structure on bicomodules over the coalgebra.

\begin{defi}
Let $\M$ be either $\smodk$ or $\chfo$.
Let $X$ and $Y$ be $C$-comodules in $\M$. Define their \emph{cotensor product} $X\ccotens Y$ to be the following equalizer in $\M$:
\[
\begin{tikzcd}
X\ccotens Y \ar{r} & X\otimes Y \ar[shift left]{r}\ar[shift right]{r} & X\otimes C \otimes Y,
\end{tikzcd}
\]
where the two parallel morphisms are induced by the coactions $X\rightarrow X\otimes C$ and $Y\rightarrow C\otimes Y$. Since the tensor product preserves equalizers, we obtain a $C$-coaction on $X\ccotens Y$.
\end{defi}

% We record some facts on the cotensor product.

% \begin{lem}[{\cite[2.2]{EMSS}}]
% For any $C$-comodule $X$, we have a natural isomorphism of $C$-comodules: \[X\ccotens C \cong X \cong C\ccotens X.\]
% \end{lem}

% \begin{lem}[{\cite[2.1]{EMSS}}]\label{lem: cotensor of cofree formula}
% Let $M$ be an object in $\M$. Then for any cofree comodule $M\otimes C$ we have natural isomorphisms of $C$-comodules: 
% \[(M\otimes C)\ccotens X\cong M\otimes X \mbox{ and } X\ccotens (C\otimes M)\cong X\otimes M.\]
% \end{lem}

% \begin{prop}[{\cite[4.6]{connectivecomod}}]\label{prop: limits and colimits with cotensor product}
% Let $X$ be a $C$-comodule.
% Then $X\ccotens - : \comod_C(\M)\rightarrow \comod_C(\M)$ is a left exact functor that  preserves finite limits and filtered colimits.
% \end{prop}

In general, the functor $X\ccotens -\colon \comod_C(\M)\rightarrow \comod_C(\M)$ does not preserve non-finite limits and is neither a left nor a right adjoint. Nevertheless, the cotensor product behaves well with the Postnikov towers of Definition \ref{def: postnikov tower of comodules}.

\begin{lem}\label{lem: postnikov of cotensor}
Let $\M$ be either $\smodk$ or $\chfo$. Let $C$ be a simply connected coalgebra in $\M$.
Let $\{ X(n) \}$ be a Postnikov tower of a $C$-comodule $X$. Let $Y$ be any $C$-comodule. Then $\left\lbrace X(n) \ccotens Y \right\rbrace$ stabilizes in each degree and: \[(\lim_n^C X(n)) \ccotens Y \cong \lim_n^C (X(n)\ccotens Y).\]
\end{lem}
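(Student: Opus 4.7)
The plan is to push the Postnikov tower $\{X(n)\}$ of Corollary \ref{SUPER IMPORTANT COR} through the cotensor with $Y$, verify that the resulting tower still stabilizes in each degree, and then identify the two limits in $\M$ via the conservative forgetful functor $U:\comod_C(\M)\to\M$.

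First, I apply $-\ccotens Y$ to the pullback squares defining each $X(n+1)$. By Proposition \ref{prop: limits and colimits with cotensor product} this functor preserves finite limits (hence pullbacks), and Lemma \ref{lem: cotensor of cofree formula} simplifies the cotensor of a cofree comodule via $(M\otimes C)\ccotens Y\cong M\otimes Y$. One therefore obtains a pullback in $\comod_C(\M)$:
\[
\begin{tikzcd}
X(n+1)\ccotens Y \ar{r}\ar{d}\pull & W_n\otimes Y \ar{d}\\
X(n)\ccotens Y \ar{r} & Z_n\otimes Y,
\end{tikzcd}
\]
with $(W_n,Z_n)=(D^n(V_n),S^n(V_n))$ when $\M=\chfo$ and $(W_n,Z_n)=(PK(V_n,n),K(V_n,n))$ when $\M=\smodk$.

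Next, I verify stabilization by degree analysis. In both settings $Z_n$ vanishes in degrees below $n$ and $W_n$ in degrees below $n-1$: directly by inspection in $\chfo$; in $\smodk$, because $n\geq 1$ and there are no non-degenerate simplices in those low degrees, so $K(V_n,n)_i=0$ for $i<n$ and $PK(V_n,n)_i=0$ for $i<n-1$. Tensoring with $Y$ preserves these vanishing ranges (graded tensor in chains, dimensionwise tensor simplicially), so for $i\leq n-2$ the right column of the pullback above vanishes in degree $i$, forcing $(X(n+1)\ccotens Y)_i\cong(X(n)\ccotens Y)_i$. Hence $\{X(n)\ccotens Y\}$ stabilizes in each degree.

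Finally, I identify the two limits. Since $U$ reflects isomorphisms (the inverse of a $C$-comodule isomorphism is automatically a comodule map), it suffices to check the claim in $\M$. Corollary \ref{cor: stab in each deg induces same limit} applied to the newly stabilized tower gives $U(\lim_n^C(X(n)\ccotens Y))\cong\lim_n U(X(n)\ccotens Y)$; each $U(X(n)\ccotens Y)$ is the equalizer in $\M$ of $X(n)\otimes Y\rightrightarrows X(n)\otimes C\otimes Y$, so commuting $\lim_n$ past the equalizer (both are limits) produces the equalizer of $\lim_n(X(n)\otimes Y)\rightrightarrows\lim_n(X(n)\otimes C\otimes Y)$. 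Lemma \ref{lem: cofree preserves stabilization of towers}, applied with its ``any object'' replaced by $Y$ and then by $C\otimes Y$, rewrites these limits as $(\lim_n X(n))\otimes Y$ and $(\lim_n X(n))\otimes C\otimes Y$; a final use of Corollary \ref{cor: stab in each deg induces same limit} on $\{X(n)\}$ identifies this with $U((\lim_n^C X(n))\ccotens Y)$. The main obstacle I anticipate is the simplicial degree-vanishing step, since $K(V,n)$ and $PK(V,n)$ have richer combinatorics than the chain spheres and disks; this can alternatively be sidestepped by normalizing the towers and invoking Remark \ref{rem: dold-kan preserves stabilization in each degree}.
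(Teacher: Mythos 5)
Your proposal is correct, and its second half (the identification of the two limits) is essentially the paper's own argument: view $U(X(n)\ccotens Y)$ as the equalizer of $X(n)\otimes Y\rightrightarrows X(n)\otimes C\otimes Y$, commute $\lim_n$ past the equalizer, and apply Lemma \ref{lem: cofree preserves stabilization of towers} with $Y$ and $C\otimes Y$ in place of the arbitrary object, together with Corollary \ref{cor: stab in each deg induces same limit} to pass between $\lim^C_n$ and $\lim_n$. Where you genuinely diverge is the stabilization step. The paper disposes of it in one line: equalizers of towers that stabilize in each degree again stabilize in each degree (degreewise, an equalizer of isomorphic diagrams is an isomorphism), so stabilization of $\{X(n)\ccotens Y\}$ follows from Lemma \ref{lem: cofree preserves stabilization of towers} applied to $\{X(n)\otimes Y\}$ and $\{X(n)\otimes C\otimes Y\}$, with no reference to the pullback structure of the Postnikov tower. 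You instead push the defining pullback squares through $-\ccotens Y$ (using left exactness of the cotensor and the cofree formula $(M\otimes C)\ccotens Y\cong M\otimes Y$) and read off stabilization from the connectivity of $D^n(V_n)\otimes Y$ and $S^n(V_n)\otimes Y$ (resp.\ $PK(V_n,n)\otimes Y$ and $K(V_n,n)\otimes Y$). Both arguments are valid; the paper's is shorter and uses only that $\{X(n)\}$ stabilizes, while yours yields the extra (and independently useful, cf.\ the proof of Theorem \ref{thm: comonoidal Dold-Kan comomules eq}) fact that $\{X(n)\ccotens Y\}$ is again a tower of pullbacks over maps of the form $W_n\otimes Y\to Z_n\otimes Y$. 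One cosmetic remark: your bound ``iso in degrees $i\leq n-2$'' gives stabilization of $\{X(n)\ccotens Y\}_i$ only from $n\geq i+2$ rather than the $n\geq i+1$ of the paper's definition; but the tower $\{X(n)\}$ itself exhibits the same shift (the kernel $S^{n-1}(V_n)\otimes C$ of $X(n+1)\to X(n)$ is nonzero in degree $n-1$), so this is an indexing issue already present in Corollary \ref{SUPER IMPORTANT COR} and is immaterial for everything the lemma is used for.
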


\begin{proof}
Equalizers of towers that stabilize in each degree also stabilize in each degree. 
Then the result follows from Lemma \ref{lem: cofree preserves stabilization of towers}.
\end{proof}

The following result was proved in \cite{connectivecomod} for $\M=\chfo$. We claim we can adapt the arguments to obtain the same results for $\M=\smodk$. The simply connectedness of the coalgebra is again crucial here for Theorem \ref{thm: derived cotensor product} below to show that the cotensor product preserves weak equivalences (using an Eilenberg-Moore spectral sequence argument, see \cite[4.8]{connectivecomod}).

% \begin{prop}[{\cite[4.7]{connectivecomod}}]
% Let $\M$ be either $\smodk$ or $\chfo$. Let $C$ be a coalgebra in $\M$.
% The cotensor product defines a symmetric monoidal structure on $C$-comodules and we shall denote it $\left(\comod_C(\M), \ccotens, C\right)$.
% \end{prop}

\begin{thm}[{\cite[4.2]{connectivecomod}}]\label{thm: derived cotensor product}
Let $\M$ be either $\smodk$ or $\chfo$. Let $C$ be a simply connected cocommutative coalgebra in $\M$.
Then $(\comod_C(\M), \ccotens, C)$ is a monoidal category with a fibrant-compatible symmetric monoidal structure with fibrant unit.
\end{thm}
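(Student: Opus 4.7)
The plan is to verify the three conditions of Definition \ref{defi: weak comonoidal model cat} for the symmetric monoidal category $(\comod_C(\M), \ccotens, C)$, with the symmetric monoidal structure already in hand from the preceding proposition. First I would observe that the unit $C$ is fibrant. Writing $C \cong \k \otimes C$ as a cofree comodule, the right Quillen functor $-\otimes C : \M \to \comod_C(\M)$ preserves fibrant objects, and since every object of $\M$ is fibrant, so is $C$. This renders condition (ii) of Definition \ref{defi: weak comonoidal model cat} vacuous. As $C$ is cocommutative, $-\ccotens X$ and $X\ccotens -$ coincide, so only the latter needs to be analyzed.

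The remaining task---showing that $X\ccotens -$ preserves fibrant objects and weak equivalences between fibrant objects when $X$ is fibrant---I would reduce to the Postnikov machinery of Corollary \ref{SUPER IMPORTANT COR}. Any fibrant $X$ is a retract of $\wt{X} = \lim^C_n X(n)$, where $X(n+1)$ is a pullback along a generating fibration $P_n \otimes C \to K_n \otimes C$ in $\iiio \otimes C$ or $\iido \otimes C$. Retracts preserve both fibrancy and weak equivalences, so I may replace $X$ by $\wt X$. Left exactness of $-\ccotens Y$ (Proposition \ref{prop: limits and colimits with cotensor product}) together with the cofree identification $(M\otimes C)\ccotens Y \cong M\otimes Y$ (Lemma \ref{lem: cotensor of cofree formula}) turns the defining pullback for $X(n+1)$ into
\[
\begin{tikzcd}
X(n+1)\ccotens Y \ar{r} \ar{d} & P_n\otimes Y \ar{d}\\
X(n)\ccotens Y \ar{r} & K_n\otimes Y,
\end{tikzcd}
\]
a pullback in $\comod_C(\M)$ with coactions induced from $Y$. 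Then Lemma \ref{lem: postnikov of cotensor} yields $\wt X \ccotens Y \cong \lim^C_n(X(n)\ccotens Y)$ as a tower that stabilizes in each degree.

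To prove $\wt X \ccotens Y$ fibrant when $Y$ is, I would induct on $n$ that each $X(n)\ccotens Y$ is fibrant, with the base case $X(0)\ccotens Y = 0$ trivial. The crucial input is that $P_n\otimes Y \to K_n \otimes Y$ is a fibration in $\comod_C(\M)$: its underlying map in $\M$ is a fibration by SM7 (every object of $\M$ is cofibrant), and promoting this to a fibration of comodules uses the explicit form of $P_n, K_n$ as suspensions and disks (respectively as Eilenberg--MacLane objects and their path objects), analogously to the technical arguments of \cite[A.6]{connectivecomod}. Granting this, $X(n+1)\ccotens Y$ is the pullback of a fibration between fibrants from a fibrant, hence fibrant; the limit of the resulting tower of fibrations between fibrants, which stabilizes in each degree, is then fibrant by Proposition \ref{prop: homotopy theory of towers} and Corollary \ref{cor: stab in each deg induces same limit}. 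For the weak equivalence preservation, given a weak equivalence $f : Y \to Y'$ of fibrant comodules, the cube lemma applied to the above pullback squares---together with the fact that $P_n \otimes f$ and $K_n \otimes f$ are weak equivalences (all objects of $\M$ being cofibrant)---gives inductively that each $X(n)\ccotens f$ is a weak equivalence, and stabilization of the towers propagates this to $\wt X \ccotens f$.

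The main obstacle is the lifting of fibrations from $\M$ to $\comod_C(\M)$: showing that $P_n \otimes Y \to K_n \otimes Y$ is a fibration of $C$-comodules (not merely of underlying objects), and that its pullbacks in $\comod_C(\M)$ are genuine homotopy pullbacks. This is the crux of the technical groundwork of \cite[\S 4]{connectivecomod} for $\chfo$ and must be verified to generalize to $\smodk$, where one uses the simplicial analogue of Lemma \ref{lem: pullback homology of X} already invoked in the proof of Lemma \ref{lem: factorization hard for comod2}. Once this is in place, the rest of the argument is a routine induction coupled with passage to a limit of a degreewise-stabilizing tower.
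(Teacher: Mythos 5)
The paper offers no proof of this statement: it is quoted from \cite[4.11]{connectivecomod}, where it is proved for $\chfo$, and the text merely asserts that the argument adapts to $\smodk$. Your outline reconstructs what is surely the intended strategy --- fibrant replacement by the Postnikov tower of Corollary \ref{SUPER IMPORTANT COR}, left exactness of $-\ccotens Y$ plus the cofree formula $(M\otimes C)\ccotens Y\cong M\otimes Y$ to push the cotensor through the pullbacks and the degreewise-stabilizing limit, and an induction up the tower --- and the formal parts (fibrancy of the unit $C$ as a cofree comodule, reduction to $\wt{X}$ by retracts, identification of $X(n+1)\ccotens Y$ as a pullback, passage to the limit via Lemma \ref{lem: postnikov of cotensor}) are all correct. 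The weak-equivalence half of condition (i) is essentially complete as you state it, because weak equivalences, and (over a product of fields) finite limits, of comodules are created in $\M$, so the cube lemma only needs the \emph{underlying} squares to be homotopy pullbacks, which they are.

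The fibrancy half, however, has a genuine gap at exactly the step you defer: that $D^n(V)\otimes Y\to S^n(V)\otimes Y$ (resp. $PK(V,n)\otimes Y\to K(V,n)\otimes Y$) is a fibration in $\comod_C(\M)$, not merely on underlying objects. This cannot be delegated as a routine verification: in the left-induced model structure, fibrations are defined by the right lifting property against all comodule maps whose underlying maps are acyclic cofibrations, a lift produced in $\M$ need not be a comodule map, and "underlying fibration" by itself gives nothing (the paper even warns that $U$ does not preserve fibrations in general). This single step is the entire non-formal content of the theorem; filling it requires running the Postnikov machinery a second time, on $Y$, to exhibit $D^n(V)\otimes Y\to S^n(V)\otimes Y$ as a retract of an $(\iiio\otimes C)$- (resp. $(\iido\otimes C)$-) Postnikov tower, using that $D^n(V)\otimes D^m(W)$ and $D^n(V)\otimes S^m(W)$ again split as products of disks and spheres. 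Relatedly, your induction's true base case is $n=1$: the passage from $X(0)=0$ to $X(1)\cong U(X)\otimes C$ is not a pullback along a single generating fibration, and $X(1)\ccotens Y\cong U(X)\otimes Y$, so you already need fibrancy of $M\otimes Y$ in $\comod_C(\M)$ for arbitrary $M$ --- an instance of the same crux. (A small side remark: the underlying map of $D^n(V)\otimes Y\to S^n(V)\otimes Y$ is a fibration simply because it is a degreewise split surjection, not by SM7, which concerns the pushout-product of cofibrations.)
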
 

Let $\M$ be either $\smodk$ or $\chfo$.
From Proposition \ref{prop: compatibility of DK in sym mon}, we see that we can (right) derive the cotensor product of comodules and endow the homotopy category of $C$-comodules with a symmetric monoidal structure. More generally, we can endow a symmetric monoidal structure to the underlying $\infty$-category of $\M$. From \cite[1.1, 1.2]{connectivecomod}, this endows a symmetric monoidal structure to the $\infty$-category of homotopy coherent comodules over $C$ in $\mathcal{D}^{\geq 0}(\k)$, the $\infty$-category of connective comodules over the Eilenberg-Mac lane spectrum $H\k$. A priori, the structure depends on the choice of $\M$ as the cotensor products of simplicial comodule and connective differential graded  comodules are not isomorphic. The next section will now prove the monoidal structure is independent from the choice of $\M$.

\section{Application: Dold-Kan Correspondence For Comodules}\label{section: dold-kan correspondence}

We prove here in Theorem \ref{thm: dold-kan correpsondance for Comodules} that the Dold-Kan correspondence lifts to a Quillen equivalence on the categories of (right) comodules, over a simply connected coalgebra.
Moreover, we show in Theorem \ref{thm: comonoidal Dold-Kan comomules eq} that the derived cotensor products induced by Theorem \ref{thm: derived cotensor product} are equivalent. A crucial tool in our arguments is the Postnikov tower (Definition \ref{def: postnikov tower of comodules}) as a fibrant replacement for our model categories of comodules. As a consequence, we obtain a new description of rational $A$-theory in Corollary \ref{cor: rational A-theory}.

%% This problem disappears at the level of $\infty$-categories, see \cite[4.2]{coalginDK} and Remark \ref{rem: Dold-kan can be easy} below.

\subsection{Lifting the Dold-Kan correspondence}\label{subsection: lifting dold kan}
We dualize the construction appearing in \cite[3.3]{monmodSS}.
Let $L\colon(\C, \o, \bI)\rightarrow (\D, \sm, \bJ)$ be an oplax monoidal functor between symmetric monoidal categories, with right adjoint $R$. 
Let $C$ be a coalgebra in $\C$.
Then the functor $L$ lifts to (right) comodules:
\[
L\colon \comod_C(\C)\longrightarrow \comod_{L(C)}(\D).
\]
However, if we suppose $R$ to only be lax monoidal, then $R$ does not lift to comodules.
Suppose equalizers exist.
We define the correct right adjoint $R^C$ on comodules by the following commutative diagram of adjoints:
\[
\begin{tikzcd}[column sep= huge, row sep=huge]
\comod_C(\C) \ar[shift left=2]{r}{L}[swap]{\perp}  \ar[shift left=2]{d}{U}[swap]{\vdash}& \comod_{L(C)}(\D), \ar[shift left=2]{l}{R^C} \ar[shift left=2]{d}{U}[swap]{\vdash}\\
\C \ar[shift left=2]{r}{L}[swap]{\perp}  \ar[shift left=2]{u}{-\otimes C} & \D.  \ar[shift left=2]{l}{R}[swap]{\perp}  \ar[shift left=2]{u}{-\sm L(C)}
\end{tikzcd}
\] 
In particular, for any object $M$ in $\D$, we define:
\[
R^C(M\sm L(C))= R(M) \otimes C.
\]
The definition of the functor $R^C$ is extended to any $L(C)$-comodule $X$ in $\D$ using the following equalizer in $\comod_{C}(\C)$:
\[
\begin{tikzcd}
R^C(X) \ar{r} & R(X)\otimes C \ar[shift left]{r} \ar[shift right]{r} & R(X\sm L(C))\otimes C.
\end{tikzcd}
\]
We have omitted the forgetful functor $U$ for simplicity.
The first parallel morphism in the equalizer is induced by the coaction $X\rightarrow X\sm L(C)$. 
The second parallel morphism is induced by the universal property of cofree $C$-comodules on the map in $\C$:
\[
\begin{tikzcd}
R(X)\otimes C \ar{r} & R(X)\otimes RL(C) \ar{r} & R(X\sm L(C)),
\end{tikzcd}
\]
where the first map is induced by the unit of adjunction between $L$ and $R$, and the second map is induced by the lax monoidal structure of $R$.

We apply the above construction to the  adjunctions (\ref{eq: dold-kan sym mon}) and (\ref{eq: dold-kan lax mon}) of the Dold-Kan correspondence.  

\begin{thm}\label{thm: dold-kan correpsondance for Comodules}
Let $C$ be a simply connected coalgebra in $\chfo$. Let $D$ be a simply connected  coalgebra in $\smodk$.
Then the induced adjunctions:
\begin{equation}\label{eq: comonoidal DK 1}
\begin{tikzcd}[column sep=large]
\comod_C(\chfo) \ar[shift left=2]{r}{\Gamma} & \ar[shift left=2]{l}{\NN^C}[swap]{\perp} \comod_{\Gamma(C)}(\smodk),
\end{tikzcd}
\end{equation}
and:
\begin{equation}\label{eq: comonoidal DK 2}
\begin{tikzcd}[column sep=large]
\comod_D(\smodk) \ar[shift left=2]{r}{\mathsf{N}} & \ar[shift left=2]{l}{\Gamma^D}[swap]{\perp}\comod_{\mathsf{N}(D)}(\chfo),
\end{tikzcd}
\end{equation}
are Quillen equivalences.
\end{thm}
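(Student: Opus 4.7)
The plan has three parts. First, I verify both adjunctions are Quillen adjunctions. The left adjoints $\Gamma$ and $\NN$ commute with the forgetful functors to $\chfo$ and $\smodk$ by construction, so since the comodule model structures are left-induced (cofibrations and weak equivalences detected by $U$) and the underlying Dold-Kan functors preserve monomorphisms and weak equivalences, the lifted left adjoints preserve cofibrations and weak equivalences too. Hence both (\ref{eq: comonoidal DK 1}) and (\ref{eq: comonoidal DK 2}) are Quillen adjunctions.

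Next I reduce Quillen equivalence to the cofree case via Postnikov towers. Every comodule is cofibrant, so it suffices to check that the derived unit is a weak equivalence on every object and the derived counit is a weak equivalence on every fibrant object. By Corollary \ref{SUPER IMPORTANT COR}, any fibrant right $\Gamma(C)$-comodule $Y$ is a retract of the limit $\wt{Y} = \lim_n^{\Gamma(C)} Y(n)$ of a countable tower whose stages $Y(n+1)$ are inductively built as pullbacks of cofree comodules $PK(V_n, n) \otimes \Gamma(C) \to K(V_n,n) \otimes \Gamma(C) \leftarrow Y(n)$. On cofree pieces, the equalizer defining $\NN^C$ collapses to $\NN^C(M \otimes \Gamma(C)) \cong \NN(M) \otimes C$, and the counit $\Gamma(\NN(M) \otimes C) \to M \otimes \Gamma(C)$ factors through the comonoidal coherence map of $\Gamma$ composed with the underlying counit isomorphism. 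Over $\k$, this coherence map is a weak equivalence because the Dold-Kan adjunction is a weak comonoidal Quillen equivalence (see the remark after Example \ref{ex: dold-kan is actually comonoidal}).

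Finally I induct up the tower. The base $Y(0)=0$ is trivial. At stage $n+1$, $Y(n+1)$ is a pullback of fibrant comodules along a fibration, hence a homotopy pullback; the composite $\Gamma\circ\NN^C$ preserves this pullback because $\NN^C$ is right adjoint in (\ref{eq: comonoidal DK 1}) and $\Gamma$ is right adjoint in (\ref{eq: dold-kan lax mon}), so both preserve all limits. By induction and the cofree case, the counit is a weak equivalence on each of the three corners, hence on the homotopy pullback $Y(n+1)$ as well. Since $\{Y(n)\}$ and $\{\Gamma\NN^C(Y(n))\}$ stabilize in each degree (Lemma \ref{lem: cofree preserves stabilization of towers} together with Remark \ref{rem: dold-kan preserves stabilization in each degree}), their strict limits compute the homotopy limits (Corollary \ref{cor: stab in each deg induces same limit}), so the counit passes to $\wt{Y}$. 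Retracting to $Y$ completes the counit; the derived unit is handled symmetrically, and the second adjunction (\ref{eq: comonoidal DK 2}) is treated identically using simplicial Postnikov towers. The hard part will be verifying that $\Gamma\circ\NN^C$ really sends each Postnikov pullback of fibrant $\Gamma(C)$-comodules to a homotopy pullback of fibrant $C$-comodules—in particular, that $\Gamma$ preserves the relevant fibrations between cofree objects—which I expect to leverage from $\Gamma$'s secondary role as right Quillen adjoint in (\ref{eq: dold-kan lax mon}) on the underlying categories.
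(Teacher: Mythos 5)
Your proposal follows essentially the same route as the paper: reduce to the counit on a fibrant comodule $X$, replace $X$ by the limit $\wt{X}$ of its Postnikov tower from Corollary \ref{SUPER IMPORTANT COR}, verify the cofree case via the formula $\NN^C(M\otimes\Gamma(C))\cong\NN(M)\otimes C$ and the weak monoidal Quillen equivalence, propagate up the tower by comparing homotopy pullbacks (using that $\NN^C$ is right Quillen for (\ref{eq: comonoidal DK 1}) and that $\Gamma$ is right Quillen on underlying categories in (\ref{eq: dold-kan lax mon})), and pass to the limit via degreewise stabilization. The one point to repair is your closing claim that ``the derived unit is handled symmetrically'': it is not, since the derived unit $X\to\NN^C(f\Gamma(X))$ requires a fibrant replacement in the target category and has no mirror-image argument. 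The correct move, and the one the paper makes, is to observe that $\Gamma$ reflects weak equivalences between (automatically cofibrant) comodules because weak equivalences are detected in the underlying categories where $\Gamma$ is an equivalence; then by \cite[1.3.16]{hovey} the counit condition on fibrant objects alone suffices, and no unit check is needed. With that substitution your argument is the paper's proof.
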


\begin{rem}\label{rem: Dold-kan can be easy}
Using the rigidification result of \cite{connectivecomod}, we can prove Theorem \ref{thm: dold-kan correpsondance for Comodules} using $\infty$-categories. However, we believe there is value to present a proof that does not depend on the rigidification, and in any cases, both rely on some version of Corollary \ref{SUPER IMPORTANT COR}. Nevertheless, let us explain how to prove Theorem \ref{thm: dold-kan correpsondance for Comodules} using $\infty$-categories. Let $\W_\mathsf{dg}$ be the class of quasi-isomorphisms in $\chfo$. Let $\W_\Delta$ be the class of weak homotopy equivalences in $\smodk$. Let $C$ be a simply connected coalgebra in $\chfo$. Denote by $\W_{\mathsf{dg}, \comod}$ the class of quasi-isomorphisms between differential graded $C$-comodules. Let $\W_{\Delta, \comod}$ denote the class of weak homotopy equivalences between simplicial $\Gamma(C)$-comodules. Given $\Cinf$ a symmetric monoidal $\infty$-category, and $T$ an $\mathbb{A}_\infty$-coalgebra in $\Cinf$, we denote by $\comodinf_T(\Cinf)$ the $\infty$-category of right $T$-comodules in $\Cinf$, i.e., right $T$-modules in the opposite category $\Cinf\op$.  We obtain the following diagram of $\infty$-categories:
\[
\begin{tikzcd}[row sep=large]
\comodinf_C\left( \N(\chfo) \left[\W^{-1}_\mathsf{dg}\right] \right) \ar{r}{\simeq}[swap]{\Gamma} & \comodinf_{\Gamma(C)} \left( \N(\smodk) \left[ \W^{-1}_\Delta\right] \right)\\
\N\left(\comod_C(\chfo) \right) \left[\W_{\mathsf{dg}, \comod}^{-1}\right]\ar{u}{\simeq} \ar{r}{\Gamma} & \N\left(\comod_{\Gamma(C)}(\smodk) \right) \left[ \W_{\Delta, \comod}^{-1} \right]. \ar{u}[swap]{\simeq}
\end{tikzcd}
\]
The vertical maps are equivalences of $\infty$-categories by \cite[1.1]{connectivecomod}. The top horizontal map is an equivalence by \cite[4.1]{coalginDK}: it states that the Dold-Kan correspondence induces a symmetric monoidal equivalence on the underlying $\infty$-categories of $\smodk$ and $\chfo$. In particular, homotopy coherent comodules are equivalent. Therefore, the bottom horizontal map is an equivalence of $\infty$-categories. In particular, the left adjoint of the adjunction (\ref{eq: comonoidal DK 1}) is a Quillen equivalence. The proof is similar for the other adjunction.
\end{rem}

\begin{proof}[Proof of Theorem \ref{thm: dold-kan correpsondance for Comodules}]
We shall only prove the statements for the adjunction (\ref{eq: comonoidal DK 1}). The other one has an entirely similar proof.

Since  the functor $\Gamma\colon \chfo\rightarrow \smodk$ is left Quillen, and cofibrations and weak equivalences are determined by the underlying categories, the left adjoint functor $\Gamma\colon\comod_C(\chfo)\rightarrow \comod_{\Gamma(C)}(\smodk)$ is also left Quillen.
It also reflects weak equivalences between (cofibrant) objects.
Thus, by \cite[1.3.16]{hovey} we only need to show that the counit:
\[
\Gamma \NN^C(X) \longrightarrow X,
\]
is a weak homotopy equivalence, for any fibrant $\Gamma(C)$-comodule $X$. 

For this matter, let $\{ X(n) \}$ be the Postnikov tower of $X$, and denote by $\wt{X}$ the (homotopy) limit of the tower, see Corollary \ref{SUPER IMPORTANT COR}.
Then $X$ is a retract of $\wt{X}$ and thus we only need to show the weak homotopy equivalence when applied on $\wt{X}$.
We prove the claim by induction on the tower. 
Clearly the map $\Gamma \NN^C(X(0)) \longrightarrow X(0)$ is a weak homotopy equivalence as it is the trivial map.

Suppose $X$ is a cofree comodule $M\otimes \Gamma(C)$, for $M$ in $\smodk$.
Then the formula:
\[
\NN^C(M\otimes \Gamma(C)) = \NN(M)\otimes C,
\]
combined with the weak homotopy equivalence:
\[
\begin{tikzcd}
\Gamma(\mathsf{N}(M)\otimes C) \ar{r}{\simeq} & \Gamma(\mathsf{N}(M))\otimes \Gamma(C)\cong M\otimes \Gamma(C),
\end{tikzcd}
\]
induced by the fact that $\Gamma\colon \chfo\rightarrow \smodk$ is part of a weak monoidal Quillen equivalence shows that $\Gamma \NN^C(M\otimes \Gamma(C)) \longrightarrow M\otimes \Gamma(C)$ is a weak homotopy equivalence. 
In particular, we obtain that $\Gamma \NN^C(X(1)) \longrightarrow X(1)$ is a weak homotopy equivalence.

Now suppose we have shown the map $\Gamma\NN^C(X(n)) \longrightarrow X(n)$ is a weak homotopy equivalence for some $n\geq 1$.
Since $X(n+1)$ is the homotopy pullback in $\comod_{\Gamma(C)}(\smodk)$ (and in $\smodk)$:
\[
\begin{tikzcd}
X(n+1) \pull \ar{r} \ar[two heads]{d} & P\otimes \Gamma(C)\ar[two heads]{d}\\
X(n) \ar{r} & Q\otimes \Gamma(C),
\end{tikzcd}
\]
for some epimorphism $P\rightarrow Q$ in $\smodk$,
then as $\NN^C$ is a right Quillen functor, it preserves homotopy pullbacks and thus we get the following homotopy pullback in $\comod_C(\chfo)$ (and in $\chfo$):
\[
\begin{tikzcd}
\NN^C(X(n+1)) \pull \ar{r} \ar[two heads]{d} & \mathsf{N}(P)\otimes C\ar[two heads]{d}\\
\NN^C(X(n)) \ar{r} & \mathsf{N}(Q)\otimes C,
\end{tikzcd}
\]
as $\mathsf{N}$ preserves fibrations. 
In particular, notice that the induced maps $\NN^C(X(n+1))\rightarrow \NN^C(X(n))$ are fibrations in $\comod_C(\chfo)$ and $\chfo$. Moreover, the tower $\{\NN^C(X(n))\}$ stabilizes in each degree (as it is the equalizer of towers that stabilize in each degree by Lemma \ref{lem: cofree preserves stabilization of towers}). Thus the homotopy limits of $\{\NN^C(X(n))\}$ in $\comod_C(\chfo)$ and in $\chfo$ are equivalent to $\NN^C(\wt{X})$.

Consider the above pullback in $\chfo$, then as $\Gamma\colon \chfo\rightarrow \smodk$ is a right Quillen functor, we obtain the following homotopy pullback in $\smodk$:
\[
\begin{tikzcd}
\Gamma\NN^C(X(n+1)) \pull \ar{r} \ar[two heads]{d} & \Gamma(\mathsf{N}(P)\otimes C)\ar[two heads]{d}\\
\Gamma\NN^C(X(n)) \ar{r} & \Gamma(\mathsf{N}(Q)\otimes C).
\end{tikzcd}
\]
Applying the weak homotopy equivalences: \[
\begin{tikzcd}
\Gamma(\mathsf{N}(Q)\otimes C)\stackrel{\simeq}\longrightarrow Q\otimes \Gamma(C), &  \Gamma(\mathsf{N}(P)\otimes C)\stackrel{\simeq}\longrightarrow P\otimes \Gamma(C)
\end{tikzcd}
\]
induced again by $\Gamma\colon\chfo\rightarrow \smodk$ being part of a weak monoidal Quillen equivalence, we obtain by \cite[5.2.6]{hovey} the homotopy pullback in $\smodk$:
\[
\begin{tikzcd}
\Gamma\NN^C(X(n+1)) \pull \ar{r} \ar[two heads]{d} &P\otimes \Gamma(C)\ar[two heads]{d}\\
\Gamma\NN^C(X(n)) \ar{r} & Q\otimes \Gamma(C).
\end{tikzcd}
\]
By induction, since $\Gamma{\mathsf{N}^C}(X(n)) \rightarrow X(n)$ is a weak homotopy equivalence, we obtain that $\Gamma{\mathsf{N}^C}(X(n+1))\rightarrow X(n+1)$ is a weak homotopy equivalence.

We conclude that $\Gamma\NN^C(\wt{X}) \longrightarrow \wt{X}$
is a weak homotopy equivalence by noticing that (we omit the underlying functor):
\begin{eqnarray*}
\Gamma\NN^C(\wt{X}) & \simeq & \Gamma\NN^C(\holim^{\Gamma(C)}_n X(n) ) \\
&\simeq & \Gamma(\holim^C_n\NN^C(X(n)))\\
& \simeq &\Gamma(\holim_n \NN^C(X(n)))\\
& \simeq & \holim_n \Gamma\NN^C(X(n))\\
& \simeq & \holim_n X(n) \\
& \simeq & \wt{X}.
\end{eqnarray*}
We have used that the homotopy limits of $\{X(n)\}$ and $\{ \NN^C(X(n))\}$ can be computed in their underlying categories and that $\NN^C$ is a right Quillen functor. We also used that $\Gamma$ preserves homotopy limits.
\end{proof}

\subsection{A weak opmonoidal Quillen equivalence}
Let $C$ be a cocommutative coalgebra in $\chfo$. 
Consider $\Gamma$ as an oplax symmetric monoidal left adjoint functor induced by the Eilenberg-Zilber map, as in the adjunction (\ref{eq: dold-kan sym mon}).
Let $X$ and $Y$ be $C$-comodules. The oplax structure on $\Gamma$ induces a unique dashed map on the equalizers:
\[
\begin{tikzcd}
\Gamma(X\ccotens Y) \ar{r} \ar[dashed]{d} & \Gamma (X\otimes Y) \ar{d} \ar[shift left]{r} \ar[shift right]{r} & \Gamma(X\otimes C \otimes Y)\ar{d}\\
\Gamma(X) \cotens_{\Gamma(C)} \Gamma(Y) \ar{r} & \Gamma(X)\otimes \Gamma(Y) \ar[shift left]{r}\ar[shift right]{r} & \Gamma(X)\otimes \Gamma(C) \otimes \Gamma(Y).
\end{tikzcd}
\]
Thus $\Gamma$ lifts to a functor $\comod_C(\chfo)\rightarrow \comod_{\Gamma(C)}(\smodk)$ that is oplax symmetric monoidal with respect to their cotensor products.
%%One can argue similarly when we choose $\mathsf{N}$ to be a lax comonoidal left adjoint.

Recall that we denote by $\Dinf^{\geq 0}(\k)$ the $\infty$-category of connective comodules over the Eilenberg-Mac Lane spectrum $H\k$. It is equivalent to the underlying $\infty$-categories of $\smodk$ and $\chfo$.

\begin{thm}\label{thm: comonoidal Dold-Kan comomules eq}
Let $\k$ be a commutative ring with global dimension zero.
Let $C$ be a simply connected cocommutative coalgebra in $\chfo$. 
Then the induced adjunction:
\[
\begin{tikzcd}[column sep=large]
(\comod_C(\chfo), \ccotens, C) \ar[shift left=2]{r}{\Gamma} & \ar[shift left=2]{l}{\NN^C}[swap]{\perp} (\comod_{\Gamma(C)}(\smodk), \cotens_{\Gamma(C)}, \Gamma(C)),
\end{tikzcd}
\]
is a weak opmonoidal Quillen equivalence. In particular, we obtain an equivalence of symmetric monoidal $\infty$-categories:
\[
\comodinf_C(\Dinf^{\geq 0}(\k)) \simeq \comodinf_{\Gamma(C)}(\Dinf^{\geq 0}(\k)),
\]
with respect to their derived cotensor product of comodules. 
\end{thm}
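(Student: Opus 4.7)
By Theorem \ref{thm: dold-kan correpsondance for Comodules}, the given adjunction is already a Quillen equivalence, so the plan is to verify the two axioms of Definition \ref{defi: weak comonoidal quillen pair} for the lax monoidal right adjoint $\NN^C$; the symmetric monoidal equivalence of underlying $\infty$-categories then follows from Theorem \ref{thm: Weak monoidal eq imply strong in infinity}, together with the identification of the underlying $\infty$-categories with $\comodinf_C(\Dinf^{\geq 0}(\k))$ and $\comodinf_{\Gamma(C)}(\Dinf^{\geq 0}(\k))$ from \cite[1.1]{connectivecomod}. Axiom \ref{enum; weak comonoidal 2} is immediate: by Theorem \ref{thm: derived cotensor product}, the unit $\Gamma(C)$ is already fibrant in $\comod_{\Gamma(C)}(\smodk)$, so we can take the identity as its fibrant replacement, and the natural map $C \to \NN^C(\Gamma(C))$ is an isomorphism since $\NN$ and $\Gamma$ are inverse equivalences of underlying categories.

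The substance is therefore axiom \ref{enum: weak comonoidal}: for any fibrant $\Gamma(C)$-comodules $X$ and $Y$, the comparison map
\[
\NN^C(X) \ccotens \NN^C(Y) \longrightarrow \NN^C(X \cotens_{\Gamma(C)} Y)
\]
is a quasi-isomorphism. My plan is to reduce this to the Postnikov presentation of Corollary \ref{SUPER IMPORTANT COR} and then verify the claim inductively. Since any fibrant $\Gamma(C)$-comodule is a retract of the limit $\wt{X} = \lim_n^{\Gamma(C)} X(n)$ of its Postnikov tower, and the comparison map is natural in both variables, it suffices to check the statement for $X = \wt{X}$ and $Y = \wt{Y}$.

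The base case concerns cofree comodules $X(1) = M \otimes \Gamma(C)$ and $Y(1) = M' \otimes \Gamma(C)$. Here $\NN^C(M\o \Gamma(C)) \cong \NN(M) \otimes C$, and using Lemma \ref{lem: cotensor of cofree formula} (twice, via cocommutativity of $C$), the comparison map reduces to the Eilenberg-Zilber map
\[
\NN(M) \otimes \NN(M') \otimes C \longrightarrow \NN(M \otimes M') \otimes C,
\]
which is a quasi-isomorphism since \eqref{eq: dold-kan sym mon} is a weak monoidal Quillen equivalence and every object of $\smodk$ is cofibrant. The inductive step exploits that each $X(n+1)$ sits in a pullback with cofree comodules $PK(V_n,n)\o \Gamma(C) \twoheadrightarrow K(V_n,n)\o \Gamma(C)$. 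Since $\ccotens$ and $\cotens_{\Gamma(C)}$ preserve finite limits (Proposition \ref{prop: limits and colimits with cotensor product}), since $\NN^C$ is a right adjoint, and since $\NN^C$ sends this pullback to the analogous Postnikov stage over $C$ with $PK(V_n,n)\o \Gamma(C)$, $K(V_n,n)\o \Gamma(C)$ replaced by $D^n(V_n)\o C$, $S^n(V_n)\o C$, both sides of the comparison map arise as pullbacks of the same shape; a two-out-of-three argument in the resulting commutative cube, combined with the inductive hypothesis applied at level $n$ and the base case applied to the cofree levels, gives the statement at level $n+1$.

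Finally, to pass to the limits $\wt{X}, \wt{Y}$, I will use Lemma \ref{lem: postnikov of cotensor} which ensures that both $\{X(n) \cotens_{\Gamma(C)} Y(n)\}$ and $\{\NN^C(X(n)) \ccotens \NN^C(Y(n))\}$ are towers that stabilize in each degree, so their limits coincide with their homotopy limits and commute with the underlying-forgetful and normalization functors (via Remark \ref{rem: dold-kan preserves stabilization in each degree} and Corollary \ref{cor: stab in each deg induces same limit}). The expected main obstacle is keeping the bookkeeping honest when inducting in two variables simultaneously: one must check that the various maps of Postnikov towers in $\smodk$ and $\chfo$ assemble into a genuine map of towers (so that the limit comparison is well-defined), and that the required homotopy pullback squares remain homotopy pullbacks after applying $\NN^C$ and the cotensor bifunctor. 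Once this is settled, taking limits yields the weak equivalence on $\wt{X}, \wt{Y}$ and therefore on $X, Y$, completing the verification and, via Theorem \ref{thm: Weak monoidal eq imply strong in infinity}, the stated symmetric monoidal equivalence of $\infty$-categories.
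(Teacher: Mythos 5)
Your overall strategy --- reduce to the Postnikov towers of Corollary \ref{SUPER IMPORTANT COR}, handle cofree comodules via the Eilenberg-Zilber map, propagate through the defining pullback squares, and pass to the limit using stabilization in each degree --- is the paper's strategy. The gap is in how the induction is organized. Your base case covers only the cofree-against-cofree situation $(M\o\Gamma(C))\cotens_{\Gamma(C)}(M'\o\Gamma(C))$, but your inductive step consumes more than that: expanding $X(n+1)\cotens_{\Gamma(C)} Y$ (or the diagonal $X(n+1)\cotens_{\Gamma(C)} Y(n+1)$) via the defining pullback of $X(n+1)$ and left-exactness of the cotensor product produces corner terms $(PK(V_n,n)\o\Gamma(C))\cotens_{\Gamma(C)} Y\cong PK(V_n,n)\o Y$ and $(K(V_n,n)\o\Gamma(C))\cotens_{\Gamma(C)} Y\cong K(V_n,n)\o Y$ in which the second factor is \emph{not} cofree. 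To compare these with $\nn(PK(V_n,n))\o\nnn(Y)$, resp.\ $\nn(K(V_n,n))\o\nnn(Y)$, you must already know that $\nn(M)\o\nnn(Y)\rightarrow \nnn(M\o Y)$ is a quasi-isomorphism for an \emph{arbitrary} fibrant $\Gamma(C)$-comodule $Y$. This mixed case is neither your stated base case nor a formal consequence of it; it is a separate statement that requires its own Postnikov induction on $Y$ (base case $Y$ cofree, which is Eilenberg-Zilber; inductive step the same pullback argument run on $\{Y(m)\}$; then stabilization to reach $\wt{Y}$). As written, ``the base case applied to the cofree levels'' does not supply what your cube argument consumes.

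This is exactly how the paper resolves the bookkeeping you flag as the main obstacle: it runs a \emph{nested} double induction rather than a simultaneous diagonal one. The outer induction is on $\{X(n)\}$ with $Y$ held fixed and arbitrary fibrant; its base case ($X(1)$ cofree) is itself proved by an inner induction on $\{Y(m)\}$, establishing precisely the mixed statement $\nn(M)\o\nnn(Y)\rightarrow\nnn(M\o Y)$. Your diagonal induction could be repaired by strengthening the inductive hypothesis at stage $n$ to include all mixed cases (cofree against stage $\leq n$ and vice versa), but doing so in effect reconstructs the nested induction. The remaining ingredients of your proposal --- the verification of axiom \ref{enum; weak comonoidal 2} using that $\Gamma(C)$ is fibrant and $\nnn(\Gamma(C))\cong C$, the retract reduction to $\wt{X}$, the preservation of the relevant homotopy pullbacks, and the limit step via Lemma \ref{lem: postnikov of cotensor} and stabilization --- all match the paper and are fine.
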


\begin{rem}
From the adjunction (\ref{eq: dold-kan lax mon}), we can endow $\nn$ with a oplax monoidal left adjoint structure and repeat our above arguments. However, if $D$ is a cocommutative coalgebra in $\smodk$, then there is no hope that $\nn(D)$ is also cocommutative. Thus $\comod_{\nn(D)}(\chfo)$ is not a symmetric monoidal category. Thus we cannot promote the adjunction (\ref{eq: comonoidal DK 2}) of Theorem \ref{thm: dold-kan correpsondance for Comodules} to be a weak opmonoidal Quillen equivalence. Alternatively, we can consider bicomodules over non-cocommutative coalgebras $D$ and $\nn(D)$. In this case we obtain by similar arguments a weak opmonoidal (but not symmetric) equivalence. For simplicity, we do not give details but the proof is entirely similar as of Theorem \ref{thm: comonoidal Dold-Kan comomules eq}.
\end{rem}

\begin{proof}[Proof of Theorem \ref{thm: comonoidal Dold-Kan comomules eq}]
We have $\nn^C(\Gamma(C))\cong C$, thus we only need to show that the lax monoidal map:
\[
\nnn(X)\ccotens \nnn(Y) \longrightarrow \nnn(X\cotens_{\Gamma(C)} Y),
\]
is a weak homotopy equivalence for $X$ and $Y$ fibrant $\Gamma(C)$-comodules.
Let $\{ X(n)\}$ be the Postnikov tower of $X$ as in Definition \ref{def: postnikov tower of comodules}. Let $\wt{X}$ be its (homotopy) limit. It is enough to show that: \[\nnn(\wt{X})\ccotens \nnn(Y) \longrightarrow \nnn(\wt{X}\cotens_{\Gamma(C)} Y),\] is a weak homotopy equivalence.

We show inductively that $\nnn(X(n))\ccotens \nnn(Y)\rightarrow \nnn(X(n)\cotens_{\Gamma(C)} Y)$ is a weak homotopy equivalence.
For $n=0$ the case is vacuous as the map is trivial.
For $n=1$, then $X(1)$ is a cofree $\Gamma(C)$-comodule $M\otimes \Gamma(C)$, for some $M$ in $\smodk$.
Then from the formula $\nnn(M\otimes \Gamma(C))\cong \nn(M)\otimes C$, we get:
\[
\nnn(M\otimes \Gamma(C))\ccotens \nnn(Y)\cong \nn(M)\otimes \nnn(Y).
\]
Thus we need to show that the natural map $\nn(M)\otimes \nnn(Y)\rightarrow \nnn(M\otimes Y)$ is a weak homotopy equivalence.

We show this by using a Postnikov argument on $Y$. Let $\{ Y(n) \}$ be its Postnikov tower with (homotopy) limit $\wt{Y}$. We need to show that $\nn(M)\otimes \nnn(\wt{Y})\rightarrow \nnn(M\otimes \wt{Y})$ is a weak homotopy equivalence.
Let us first show that $\nn(M)\otimes \nnn(Y(n))\rightarrow \nnn(M\otimes Y(n))$, by induction.
For $n=0$ the case is vacuous.
For $n=1$, we have that $Y(1)$ is equivalent to $M'\otimes \Gamma(C)$ for some $M'$ in $\smodk$. Then the map: \[\nn(M)\otimes \nnn(M'\otimes \Gamma(C))\rightarrow \nnn(M\otimes M'\otimes \Gamma(C)),\]
becomes simply the map $\nn(M)\otimes \nn(M')\otimes C\rightarrow \nn(M\otimes M')\otimes C$. It is a weak homotopy equivalence as $\nn(M)\o\nn(M')\rightarrow \nn(M\o M')$ is a weak homotopy equivalence as $\nn$ is part of a weak monoidal Quillen adjunction. 
Now suppose that we have shown $\nn(M)\otimes \nnn(Y(n))\rightarrow \nnn(M\otimes Y(n))$ is a weak homotopy equivalence for some $n\geq 1$.
Recall that $Y(n+1)$ can be described as a homotopy pullback both in $\comod_{\Gamma(C)}(\smodk)$ and in $\smodk$:
\[
\begin{tikzcd}
Y(n+1) \ar[two heads]{d} \ar{r} \pull & P\otimes \Gamma(C) \ar[two heads]{d}\\
Y(n) \ar{r} & Q \otimes \Gamma(C),
\end{tikzcd}
\]
for some epimorphism $P\rightarrow Q$ in $\smodk$. Then as the functors $\nn\colon\smodk \rightarrow \chfo$, $M\otimes-\colon\smodk \rightarrow \smodk$, $\nn(M)\otimes-\colon \chfo\rightarrow \chfo$ and $\nnn\colon\comod_{\Gamma(C)}(\smodk)\rightarrow \comod_C(\chfo)$ preserve homotopy pullbacks, we obtain the following two homotopy pullbacks in $\smodk$:
\[
\begin{tikzcd}
\nn(M)\otimes \nnn(Y(n+1)) \ar{r} \ar[two heads]{d} \pull & \nn(M)\otimes \nn(P)\otimes C\ar[two heads]{d}\\
\nn(M)\otimes \nnn(Y(n))\ar{r} & \nn(M) \otimes \nn(Q) \otimes C,
\end{tikzcd}
\]
and:
\[
\begin{tikzcd}
\nnn(M\otimes Y(n+1)) \ar{r} \ar[two heads]{d} \pull & \nn(M\otimes P)\otimes C\ar[two heads]{d}\\
\nnn(M\otimes Y(n)) \ar{r} & \nn(M\otimes Q) \otimes C.
\end{tikzcd}
\]
Then by induction and our above argument on cofree objects, we obtain by \cite[5.2.6]{hovey} that $\nn(M)\otimes \nnn(Y(n+1))\rightarrow \nnn(M\otimes Y(n+1))$ is a weak homotopy equivalence. 
Now we can conclude as:
\begin{eqnarray*}
\nn(M)\otimes \nnn(\wt{Y}) & \simeq & \nn(M) \otimes \nnn(\holim^{\Gamma(C)}_n Y(n))\\ 
& \simeq & \nn(M) \otimes \holim^C_n\nnn(Y(n))\\
& \simeq & \holim_n^C (\nn(M)\otimes \nnn(Y(n))\\
& \simeq & \holim_n^C (\nnn(M\otimes Y(n))\\
& \simeq &\nnn(\holim_n^{\Gamma(C)} (M\otimes Y(n)))\\
& \simeq & \nnn(M\otimes \wt{Y}).
\end{eqnarray*}
We have used the fact that $\nnn$ and $M\otimes-$ and $\nn(M)\otimes-$ preserve towers that stabilize in each degree and they remain fibrant in the sense of Proposition \ref{prop: homotopy theory of towers}.

We have just shown $\nnn(X(1))\ccotens \nnn(Y)\rightarrow \nnn(X(1)\cotens_{\Gamma(C)} Y)$ is a weak homotopy equivalence. Let us show the higher cases by induction. 
Recall that $X(n+1)$ is determined as a homotopy pullback in $\comod_{\Gamma(C)}(\smodk)$ and in $\smodk$:
\[
\begin{tikzcd}
X(n+1) \ar[two heads]{d} \ar{r} \pull & P'\otimes \Gamma(C)\ar[two heads]{d} \\
X(n) \ar{r} & Q'\otimes \Gamma(C),
\end{tikzcd}
\]
for some epimorphism $P'\rightarrow Q'$ in $\smodk$. Since $\nnn$ and $-\ccotens \nnn(Y)$ preserves fibrations that are epimorphisms and pullbacks, we obtain the following two homotopy pullbacks in $\comod_{\Gamma(C)}(\smodk)$:
\[
\begin{tikzcd}
\nnn(X(n+1))\ccotens \nnn(Y) \ar[two heads]{d} \ar{r} \pull & \nn(P')\otimes \nnn(Y) \ar[two heads]{d}\\
\nnn(X(n)) \ccotens \nnn(Y) \ar{r} & \nn(Q') \otimes \nnn(Y),
\end{tikzcd}
\]
and:
\[
\begin{tikzcd}
\nnn(X(n+1)\cotens_{\Gamma(C)} Y) \ar{r} \ar[two heads]{d} \pull & \nnn(P'\otimes Y) \ar[two heads]{d}\\
\nnn(X(n)\cotens_{\Gamma(C)} Y) \ar{r} & \nnn(Q' \otimes Y).
\end{tikzcd}
\]
By induction and our previous argument, this shows $\nnn(X(n+1))\ccotens \nnn(Y) \rightarrow \nnn(X(n+1)\cotens_{\Gamma(C)} Y)$
is a weak homotopy equivalence.

Since $\nnn$ preserves towers that stabilize in each degree and by Lemma \ref{lem: postnikov of cotensor}, we can conclude by the following string of weak homotopy equivalences:
\begin{eqnarray*}
\nnn(\wt{X}) \ccotens \nnn(Y) & \simeq & \nnn(\holim^{\Gamma(C)}_n X(n)) \ccotens \nnn(Y)\\
& \simeq &(\holim_n^C \nnn(X(n)))\ccotens \nnn(Y)\\
& \simeq & \holim_n^C (\nnn(X(n))\ccotens \nnn(Y))\\
& \simeq & \holim_n^C \nnn(X(n)\cotens_{\Gamma(C)} Y)\\
& \simeq & \nnn(\holim_n^{\Gamma(C)} (X(n)\cotens_{\Gamma(C)} Y)\\
& \simeq & \nnn((\holim_n^{\Gamma(C)} X(n)) \cotens_{\Gamma(C)} Y) \\
& \simeq & \nnn(\wt{X} \cotens_{\Gamma(C)} Y). 
\end{eqnarray*}
Thus $\nnn(\wt{X})\ccotens \nnn(Y) \longrightarrow \nnn(\wt{X}\cotens_{\Gamma(C)} Y)$ is a weak homotopy equivalence.
\end{proof}

\begin{rem}
There is an alternative Dold-Kan correspondence for comodules. Let us dualize another construction from \cite{monmodSS}. Let $\ccoalg(\C)$ denote the category of cocommutative coalgebras in a symmetric monoidal category $\C$.
Let $L\colon(\C, \o, \bI)\rightarrow (\D, \sm, \bJ)$ be an oplax symmetric monoidal functor between symmetric monoidal categories, with right adjoint $R$. 
Then $L$ lifts to a functor $L\colon\ccoalg(\C)\rightarrow \ccoalg(\D)$, with a right adjoint $\overline{R}$, that can be constructed when the categories admit equalizers and cofree cocommutative coalgebras.
Let $D$ be a cocommutative coalgebra in $\D$. The counit $L\overline{R}(D)\rightarrow D$ induces an adjunction from the change of coalgebras:
\[
\begin{tikzcd}[column sep=huge]
 \comod_{L\overline{R}(D)}(\D) \ar[shift left=2]{r}[swap]{\perp} & \comod_D(\D). \ar[shift left=2]{l}{-\cotens_D L\overline{R}(D)}
\end{tikzcd}
\]
If we combine this with the previous construction:
\[
\begin{tikzcd}
 \comod_{\overline{R}(D)}(\C) \ar[shift left=2]{r}{L}[swap]{\perp} & \comod_{L\overline{R}(D)}(\D), \ar[shift left=2]{l}{R^{\overline{R}(D)}}
\end{tikzcd}
\]
and apply this in the case of the adjunction from the Dold-Kan correspondence (\ref{eq: dold-kan sym mon}), we obtain:
\[
\begin{tikzcd}
( \comod_{\overline{\nn}(D)}(\chfo), \cotens_{\overline{\nn}(D)}, \overline{\nn}(D)) \ar[shift left=2]{r}[swap]{\perp} & (\comod_D(\smodk), \cotens_D, D), \ar[shift left=2]{l}
\end{tikzcd}
\]
for any simply connected cocommutative coalgebra $D$ in $\smodk$.
Just as in the proof of Theorem \ref{thm: comonoidal Dold-Kan comomules eq}, we can show that the above adjunction is a weak opmonoidal Quillen pair. 
If we suppose $\Gamma\overline{\nn}(D)\rightarrow D$ to be a weak homotopy equivalence, then one can prove that the above adjunction is a weak opmonoidal Quillen equivalence.
However, from \cite{sore3}, we expect $\Gamma\overline{\nn}(D)\rightarrow D$ to not be a weak homotopy equivalence in most cases.
In particular the functor $\overline{\nn}\colon\ccoalg(\smodk)\rightarrow \ccoalg(\chfo)$ does not represent the correct right adjoint of the left derived functor of $\Gamma$ on the $\infty$-categories of $\mathbb{E}_\infty$-coalgebras in the underlying $\infty$-categories of $\smodk$ and $\chfo$. Thus we do not expect the above Quillen pair to be a Quillen equivalence. We conjecture that the problems disappear if we consider simply connected \emph{conilpotent} cocommutative coalgebras.
\end{rem}

\subsection{Application to \textit{A}-theory}
Let $X$ be a simplicial set.
We say it is simply connected if $X_0=*$ and there are no non-degenerate $1$-simplices.
Denote by $\comod_{X_+}$ the category of $X_+$-comodules in pointed simplicial sets $(\mathsf{sSet}_*,  \wedge, S^0)$, where $X_+=X\coprod \{*\}$.
Let $\mathcal{E}_*$ be a generalized reduced homology theory.
Recall we can endow $\mathsf{sSet}$ with a model structure, denoted by $(\mathsf{sSet}_*)_{\mathcal{E}}$ for which weak equivalences are $\mathcal{E}_*$-equivalences and cofibrations are monomorphisms \cite{bouss}, localizing the usual Kan model structure on $\mathsf{sSet}$.
In \cite[4.8]{HSWald}, Hess and Shipley establish a left-induced combinatorial and simplicial model structure on $X_+$-comodules denoted by $(\comod_{X_+})_\mathcal{E}$ via the adjunction:
\[
\begin{tikzcd}
(\mathsf{sSet}_*)_\mathcal{E} \ar[shift left=2]{r}{-\wedge X_+}[swap]{\perp} & (\comod_{X_+})_\mathcal{E}. \ar[shift left=2]{l}{U}
\end{tikzcd}
\]
Recall that $\smod_\k$ takes part in a Quillen adjunction:
\[
\begin{tikzcd}
(\mathsf{sSet}_*)_\textup{Kan} \ar[shift left=2]{r}{\widetilde{\k}[-]}[swap]{\perp} & \smodk \ar[shift left=2]{l}{U}
\end{tikzcd}
\]
where $\widetilde{\k}[X_+]\cong \k[X]$ is the reduced free simplicial module.
As $\widetilde{\k}[A\wedge B]\cong \widetilde{\k}[A]\otimes_\k \widetilde{\k}[B]$, the functor is strong symmetric monoidal, and we thus obtain a functor $\widetilde{\k}[-]\colon \comod_{X_+}\rightarrow \comod_{\k[X]}(\smodk)$. From now on, we denote $\comod_{\k[X]}(\smodk)$ simply as $\comod_{\k[X]}$. We are interested in the case $\k=\mathbb{Q}$.

\begin{thm}
    Suppose $X$ is a simply connected simplicial set.
    There is a Quillen equivalence
    \[
\begin{tikzcd}
(\comod_{X_+})_{H\Q} \ar[shift left=2]{r}{\widetilde{\Q}[-]}[swap]{\perp} & \comod_{\Q[X]}. \ar[shift left=2]{l}{}
\end{tikzcd}
\]
\end{thm}

\begin{proof}
    First, notice that, by \cite[1.3.16]{hovey}, we have a Quillen equivalence 
    \[
\begin{tikzcd}
(\mathsf{sSet}_*)_{H\Q} \ar[shift left=2]{r}{\widetilde{\Q}[-]}[swap]{\perp} & \smod_\Q. \ar[shift left=2]{l}{U}
\end{tikzcd}
\]
Indeed, let $A\rightarrow B$ be a map in $\mathsf{sSet}_*$ such that the induced map on simplicial homotopy groups $\pi_*(\widetilde{\Q}[A])\rightarrow \pi_*(\widetilde{\Q}[B])$ is an isomorphism.
By the Dold-Kan correspondence, we know $\pi_*(\widetilde{\Q}(A))\cong \widetilde{H}_*(|A|; \Q)$, and therefore  $\widetilde{\Q}[-]$ reflects weak equivalences. 
Now let $M$ be any simplicial $\Q$-module. We need to show that the counit $\widetilde{\Q}[M]\rightarrow M$ is a weak homotopy equivalence. From the short exact sequence:
\[
\begin{tikzcd}
 0\ar{r} &  \Q[*] \ar[hook]{r} & \Q[M] \ar{r} & \widetilde{\Q}[M] \ar{r} &  0
\end{tikzcd}
\]
and the $5$-lemma, it is enough to show $\Q[M]\rightarrow M$ is a weak homotopy equivalence.
By the K\"unneth theorem, as $\Q$ is flat as a $\mathbb{Z}$-module, we obtain that
\[
\pi_*(\Q[M])\cong H_*(\mathsf{N}(\Q[M]))\cong  H_*(\Q\otimes_\mathbb{Z} \mathsf{N}(M))\cong H_*(\Q)\otimes_\mathbb{Z} H_*(\mathsf{N}(M))\cong \Q\otimes_\mathbb{Z}\pi_*(M)
\]
We obtain that the induced map $\Q\otimes_\mathbb{Z} \pi_*(M)\rightarrow \pi_*(M)$ is an isomorphism.

By our discussion at the beginning of Section \ref{subsection: lifting dold kan}, we obtain an adjunction
\[
\begin{tikzcd}
\comod_{X_+} \ar[shift left=2]{r}{\widetilde{\Q}[-]}[swap]{\perp} & \comod_{\Q[X]} \ar[shift left=2]{l}{U^X}
\end{tikzcd}
\]
where $U^X$ is defined as the equalizer in $\comod_{X_+}$:
\[
\begin{tikzcd}
    U^X(M) \ar{r} & U(M) \wedge X_+ \ar[shift left]{r} \ar[shift right]{r} & U(M\otimes {\Q}[X])\wedge X_+
\end{tikzcd}
\]
for any ${\Q}[X]$-comodule $M$. If $M=A\otimes {\Q}[X]$, the cofree comodule on $A\in \smod_\Q$, then $U^X(A\otimes {\Q}[X])\cong U(A)\wedge  X_+$.
Applying again \cite[1.3.16]{hovey}, we need to show that the counit $\widetilde{\Q}[U^X(M)]\rightarrow M$ is a weak homotopy equivalence for any fibrant $\Q[X]$-module $M$.
We argue inductively using Corollary \ref{SUPER IMPORTANT COR}, as $\Q[X]$ is simply connected.
First notice that for $M=A\otimes \Q[X]$, we have from what we argued above:
\[
\widetilde{\Q}[U^X(A\otimes \Q[X])]\simeq \widetilde{\Q}[U(A)\wedge X_+]\simeq \widetilde{\Q}[A]\otimes {\Q}[X] \stackrel{\simeq}\rightarrow A\otimes \Q[X].
\]
Let $\{M(n)\}$ be the Postnikov tower of $M$ as in Definition \ref{def: postnikov tower of comodules} and $\widetilde{M}$ its homotopy limit. 
Suppose we have shown $\widetilde{\Q}[U^X(M(n))]\stackrel{\simeq}\rightarrow M(n)$ for some $n\geq 1$.
Recall that $M(n+1)$ is the homotopy pullback in $\comod_{\Q[X]}$:
\[
\begin{tikzcd}
M(n+1) \ar{r} \ar[two heads]{d} \pull & PK(V_n, n)\otimes \Q[X]\ar[two heads]{d}\\
M(n) \ar{r} & K(V_n, n) \otimes \Q[X].
\end{tikzcd}
\]
Since $U^X$ is a right Quillen functor, we obtain the homotopy pullback in $(\comod_{X_+})_{H\Q}$:
\[
\begin{tikzcd}
U^X(M(n+1)) \ar{r} \ar[two heads]{d} \pull & PK(V_n, n)\wedge X_+\ar[two heads]{d}\\
U^X(M(n)) \ar{r} & K(V_n, n) \wedge X_+.
\end{tikzcd}
\]
By \cite[3.12]{HSWald}, we can view $\comod_{X_+}$ as a reflective subcategory of the category of retractive spaces over $X$, via a right Quillen functor denoted by $-\star X$, that preserves and reflects $H\Q_*$-equivalences \cite[3.13]{HSWald}. Therefore, using the formula \cite[3.11]{HSWald}, we obtain a homotopy pullback in $\mathsf{sSet}$:
\[
\begin{tikzcd}
    U^X(M(n+1))\star X \ar[two heads]{d} \ar{r} \pull & PK(V_n, n)\times X \ar[two heads]{d}\\
    U^X(M(n))\star X \ar{r} & K(V_n, n)\times X.
\end{tikzcd}
\]
Since $X$ is simply connected and $K(V_n, n)$ is a loop space, the right vertical fibration is between nilpotent connected spaces, and thus by the Eilenberg-Moore spectral sequence \cite[4.1]{bousspectral}, we obtain the following homotopy pullback in $\smod_\Q$:
\[
\begin{tikzcd}
    \widetilde{\Q}[U^X(M(n+1))\star X] \ar[two heads]{d} \ar{r} \pull &\widetilde{\Q}[PK(V_n, n)]\otimes \Q[X] \ar[two heads]{d}\\
    \widetilde{\Q}[U^X(M(n))\star X] \ar{r} & \widetilde{\Q}[K(V_n, n)]\otimes \Q[X].
\end{tikzcd}
\]
By induction, as $-\star X$ preserves and reflects $H\Q$-equivalences, we can conclude that $\widetilde{\Q}[U^X(M(n+1))]\stackrel{\simeq}\rightarrow U^X(M(n+1))$ is a weak homotopy equivalence.
Because the tower $\{M(n)\}$ is stable in each degree, then so is $\{\widetilde{\Q}[U^X(M(n))\star X]\}$. Therefore we obtain the string of weak homotopy equivalences:
\begin{align*}
    \widetilde{\Q}[U^X(\widetilde{M})] & \simeq \widetilde{\Q}[U^X(\holim_n^{\Q[X]} M(n))]\\
    & \simeq \widetilde{\Q}[ \holim_n^{X_+} U^X(M(n))]\\
    & \simeq \widetilde{\Q}[ \holim_n (U^X(M(n))\star X)]\\
    & \simeq \holim_n \widetilde{\Q}[(U^X(M(n))\star X)]\\
    & \simeq \holim_n \widetilde{\Q}[U^X(M(n))]\\
    & \simeq \holim_n M(n)\\
    & \simeq \widetilde{M}.
\end{align*}
As in the commutative diagram:
\[
\begin{tikzcd}
 \widetilde{\Q}[U^X(M)] \ar{r} \ar{d}[swap]{\simeq} & M \ar{d}{\simeq} \\
    \widetilde{\Q}[U^X(\widetilde{M})] \ar{r}{\simeq} & \widetilde{M} 
\end{tikzcd}
\]
three-out-of-four maps are weak homotopy equivalences, we can conclude $\widetilde{\Q}[U^X(M)]\rightarrow M$ is a weak homotopy equivalence.
\end{proof}

In \cite[1.3]{HSWald}, Hess-Shipley showed that the localized model structure defines a Waldhausen category $(\comod_{X_+})_\mathcal{E}^\textup{hf}$ of homotopically finite $X_+$-comodules and that there is an equivalence of $K$-theory spectra:
\[
A(X; \mathcal{E}_*) \simeq K((\comod_{X_+})_\mathcal{E}^\textup{hf}).
\]

Notice that the normalization of $\mathbb{Q}[X]$ is equivalent to the singular chain complex $C_*(X; \mathbb{Q})$.
By \cite{duggershipley},  the category $\comod_{C_*(X; \mathbb{Q})}^\textup{perf}$ of compact (i.e. perfect) chain complexes with a $C_*(X; \mathbb{Q})$-comodule structure inherits a Waldhausen category structure from the model structure $\comod_{C_*(X; \mathbb{Q})}(\ch^{\geq 0}_\mathbb{Q})$.
Combining our results we obtain the following.

\begin{cor}\label{cor: rational A-theory}
    Let $X$ be a simply connected  simplicial set. 
    There is an equivalence of $K$-theory spectra
    \[
    A(X; H\mathbb{Q}_*) \simeq K(\comod_{C_*(X; \mathbb{Q})}^\textup{perf}).
    \]
\end{cor}

\begin{proof}
Denote by $\comod_{\mathbb{Q}[X]}^\omega$ the Waldhausen category of compact $\mathbb{Q}[X]$-comodules in $\smod_\mathbb{Q}$. 
By \cite[3.7]{duggershipley}, the previous theorem and Theorem \ref{thm: dold-kan correpsondance for Comodules}, we get the following string of equivalences of $K$-theory spectra:
\[
A(X; H\mathbb{Q}_*)\simeq K((\comod_{X_+})_{H\mathbb{Q}}^\textup{hf}) \simeq K(\comod_{\mathbb{Q}[X]}^\omega) \simeq K(\comod_{C_*(X; \mathbb{Q})}^\textup{perf}). \qedhere
\]
\end{proof}

\renewcommand{\bibname}{References}
\bibliographystyle{amsalpha}

\bibliography{biblio}
\end{document}